\newtheorem{observation}{Observation}
\newcommand{\Z}{\mathbb{Z}}
\newcommand{\R}{\mathbb{R}}
\newcommand{\e}{\varepsilon}
\DeclareMathOperator{\conv}{conv}
\DeclareMathOperator{\cone}{cone}
\DeclareMathOperator{\clcone}{clcone}
\DeclareMathOperator{\rank}{rank}
\newcommand{\A}{\mathcal{A}}
\newcommand{\C}{\mathcal{C}}
\newcommand{\Ai}{A^i}
\newcommand{\kvi}{$k$-vi}
\newcommand{\bigL}{L}
\newcommand{\LPstar}{LP^*}
\newtheorem{theorem}{Theorem}
\newtheorem{proposition}{Proposition}
\renewcommand\footnoterule{%
  \kern-3\p@
  \hrule\@width \textwidth
  \kern2.6\p@}
\renewcommand*{\@fnsymbol}[1]{\ensuremath{\ifcase#1\or *\or \dagger\or \ddagger\or **\or \mathsection\or \mathparagraph\or \|\or  \dagger\dagger
   \or \ddagger\ddagger \else\@ctrerr\fi}}
\begin{document}

\title{Aggregation-based cutting-planes for packing and covering integer programs%\thanks{Grants or other notes
%about the article that should go on the front page should be
%placed here. General acknowledgments should be placed at the end of the article.}
}
%\subtitle{Do you have a subtitle?\\ If so, write it here}

%\titlerunning{Aggregation-based cutting-planes for packing and covering Integer Programs}        % if too long for running head

\author[1]{Merve Bodur\thanks{merve.bodur@gatech.edu}}
\author[2]{Alberto Del Pia\thanks{delpia@wisc.edu}}
\author[1]{Santanu S. Dey\thanks{santanu.dey@isye.gatech.edu}}
\author[3]{Marco Molinaro\thanks{mmolinaro@inf.puc-rio.br}} 
\author[1]{Sebastian Pokutta\thanks{sebastian.pokutta@isye.gatech.edu}}
\affil[1]{\small School of Industrial and Systems Engineering, Georgia Institute of Technology} 
\affil[2]{\small Department of Industrial and Systems Engineering \& Wisconsin Institute for Discovery, University of Wisconsin-Madison}
\affil[3]{\small Computer Science Department, Pontifical Catholic University of Rio de Janeiro}

\maketitle

\begin{abstract}
In this paper, we study the strength of Chv\'atal-Gomory (CG) cuts and more generally \emph{aggregation cuts} for packing and covering integer programs (IPs). Aggregation cuts are obtained as follows: Given an IP formulation, we first generate a single implied inequality using aggregation of the original constraints, then obtain the integer hull of the set defined by this single inequality with variable bounds, and finally use the inequalities describing the integer hull as cutting-planes. Our first main result is to show that for packing and covering IPs, the CG and aggregation closures can be \emph{2-approximated} by simply generating the respective closures for each of the original formulation constraints, without using any aggregations.
%we can obtain constant factor approximation in the bound improvement obtained by all aggregation cuts, if we generate cuts using only the original constraints of the problem individually. 
On the other hand, we use computational experiments to show that aggregation cuts can be arbitrarily stronger than cuts from individual constraints for general IPs. The proof of the above stated results for the case of covering IPs with bounds require the development of some new structural results, which may be of independent interest. Finally, we examine the strength of cuts based on $k$ different aggregation inequalities simultaneously, the so-called multi-row cuts, and show that 
%(also called \emph{multi-row cuts})
%the \emph{rank} of such closures is of the order of the logarithm of the multiplicative \emph{integrality gap} for packing and covering IPs. 
\emph{every} packing or covering IP with a large \emph{integrality gap} also has a large \emph{$k$-aggregation closure rank}. In particular, this rank is always at least of the order of the logarithm of the integrality gap. 
\\ \\
\smallskip
\noindent \textbf{Keywords.} Integer programming, cutting planes, packing, covering, aggregation

%\keywords{integer programming \and cutting planes \and packing \and covering \and aggregation}
% \PACS{PACS code1 \and PACS code2 \and more}
%\subclass{MSC 90C10} %\and MSC}
\end{abstract}

\section{Introduction}\label{sec:intro}
Cutting-planes are central to state-of-the-art integer programming (IP) solvers \cite{bixby2004,Lodi2009}. While different methods have been developed to generate various families of cutting-planes~\cite{marchand:ma:we:wo:2002,RichardDey}, several of the most important families are obtained through the aggregation of the original constraints of the problem. These are special types of what we call \emph{aggregation cuts}, which are those generated as follows: given an IP formulation, we first obtain a single implied inequality by aggregating the original constraints, and then generate a cut valid for the integer hull of the set defined by this single inequality together variable bounds.

It is easy to see that \emph{Chv\'atal-Gomory (CG) cuts} are aggregation cuts: in fact, each CG cut is precisely the integer hull of the set defined by one aggregated inequality \emph{without} variable bounds. Aggregation cuts include many other classes of cuts, such as lifted knapsack covers inequalities~\cite{wolsey:1975,zemel:1978} and weight inequalities \cite{weismantel19970}. The set of all aggregation cuts have been studied empirically~\cite{FukasawaG11}, but to the best of our knowledge no theoretical study is present.

	Given the ubiquity of aggregation cuts, it is important to better understand the role of aggregation in integer programming. Of direct practical importance is to understand which aggregations are most useful. Another interesting direction, which we pursue here, is to understand in which cases aggregation is most helpful and what are the limitations of using aggregation-based cuts.
	
	In this paper, we examine the strength of aggregation cuts for \emph{packing} and \emph{covering} IPs. Our main result is that for these classes of problems, even considering all infinitely many aggregations offers limited help. More precisely, we show that the CG and more generally aggregation closures can be 2-approximated by simply generating the respective closures for each of the original constraints, without using any aggregations. Therefore, for these problems, in order to obtain cuts that are much stronger than original constraint cuts, one needs to consider more complicated cuts that cannot be generated through aggregations; see for example the results in \cite{deyMolinaroWang:2016}.
	
		 We also examine the strength of cuts based on $k$ different aggregated inequalities simultaneously (also called \emph{multi-row cuts}) for packing and covering problems. We show that \emph{every} packing or covering IP with a large integrality gap also has a large \emph{$k$-aggregation closure rank}; more precisely, for a fixed $k$, this rank is always at least of the order of the logarithm of the integrality gap. This again points to the relative weakness of aggregation cuts for packing or covering problems.
	
	Finally, simple examples show that these results are not true for general IPs, where aggregations can produce significant benefits. We provide further empirical evidence for this fact based on randomly generated general IPs and \emph{market split} instances \cite{cornuejols:da:1999}. 
	%\red{M: I think this following sentence is coming out to strongly, but we should sell a bit more the experimental results}
	From cut selection perspective, the insight here is that for packing and covering problems, using aggregation cuts may provide limited benefit over using cuts generated from only the original constraints, while aggregation cuts may produce significant value for general IPs.
		 
\paragraph{Organization.} In Section \ref{sec:DefnStatements} we provide definitions and statements of all our main results and discuss them in more detail; we also present results from the computational experiments. In Section \ref{sec:Discussion} we state some open questions. Finally, in Section \ref{sec:Packing} and Section \ref{sec:Covering} we present the proofs for results concerning the packing and covering cases, respectively.
%%%%%%%%%%%%%%%%%%%%%%%%%%%%%%%%%%%%
%%%%%%%%%%%%%%%%%%%%%%%%%%%%%%%%%%%%
%%%%%%%%%%%%%%%%%%%%%%%%%%%%%%%%%%%%
%%%%%%%%%%%%%%%%%%%%%%%%%%%%%%%%%%%%

\section{Definitions and statement of results}
\label{sec:DefnStatements}
%%%%%%%%%%%%%%%%%%%%%%%%%%%%%%%%%%%%
%%%%%%%%%%%%%%%%%%%%%%%%%%%%%%%%%%%%

	\subsection{Definitions}

For an integer $n$, we use the notation $[n]$ to describe the set $\{1, \dots, n\}$. 
For $i \in [n]$, we denote by $e_i$ the $i^{\text{th}}$ vector of the standard basis of $\mathbb{R}^n$.
The convex hull of a set $S$ is denoted as $\textup{conv}(S)$, its conic hull is denoted as $\textup{cone}(S)$, and its closed conic hull is donated as $\clcone(S)$. For a set $S \subseteq \mathbb{R}^n$ and a positive scalar $\alpha$ we define $\alpha S:= \{\alpha u\, |\, u \in S\}$. 

	\paragraph{Packing and covering.} A \emph{packing polyhedron} is of the form $\{x \in \R^n_+ \mid Ax \le b \}$ where all the data $(A,b)$ is non-negative and rational. While polyhedral sets are the main object of study here, we will also need non-polyhedral ones.\footnote{This is needed because we do not know whether the aggregation closure is polyhedral.} So a \emph{packing set} is one of the form $\{x \in \R^n_+ \mid \Ai x \le b_i \ \forall i \in I\}$ where each $(\Ai, b_i) \in (\R_+^{1 \times n}, \R_+)$ and $I$ is an arbitrary set. 
	%We call a closed convex set $P \subseteq \R^n$ as \emph{packing-type} if $x \in P$ and $y \leq x$ implies $y \in P$.
%

	Similarly, a \emph{covering polyhedron with bounds} is of the form $\{x \in \R^n_+ \mid Ax \ge b, ~x \le u \}$ where all the data $(A,b,u)$ is non-negative and rational. We assume a component of $u$ is either finite and integral, or infinite. If all upper bounds take the value of infinity, then we simply call the set a \emph{covering polyhedron}. In the non-polyhedral case, a \emph{covering set with bounds} has the form $\{x \in \R^n_+ \mid A^i x \ge b_i \ \forall i \in I, ~x \le u \}$ with $(\Ai, b_i) \in (\R_+^{1 \times n}, \R_+)$ and $u$ satisfying the same assumptions as above, but $I$ is an arbitrary set.
	% We call a closed convex set $P \subseteq \R^n$ as \emph{covering-type} if the recession of cone of $P$ contains $\R^n_+$.

	\paragraph{Closures.} Given a polyhedron $Q$, we are interested in cuts for the pure integer set $Q \cap \Z^n$.
	%Consider throughout a polyhedron $Q = \{x \in \Z^n_+ \mid Ax \le b, ~x \le u \}$; notice that this includes packing polyhedra (in this case we take $u$ to have value infinity in all coordinates)\red{M: old version: this $x \leq u$ is a part of $Ax\leq b$} and covering polyhedra with bounds. We 
	We use $\C(Q)$ and $Q^I$ to denote the CG closure and the convex hull of integer feasible solutions of $Q$, respectively (see, e.g., \cite{ConCorZam14b} for definitions). Moreover,  given a packing polyhedron $Q = \{x \in \R^n_+ \mid Ax \le b\}$, we define its \emph{aggregation closure} as $$\A(Q) := \bigcap_{\lambda \in \R^m_+} \conv(\{x \in \Z^n_+ \mid \lambda^\top A x \le \lambda^\top b\}).$$ Similarly, for a covering polyhedron $Q = \{x \in \R^n_+ \mid Ax \ge b, \ x \le u\}$ its aggregation closure is defined as $$\A(Q) := \bigcap_{\lambda \in \R^m_+} \conv(\{x \in \Z^n_+ \mid \lambda^\top A x \ge \lambda^\top b, \ x \le u\}).$$ Notice that we leave the bounds of the variables disaggregated, which gives a stronger closure than if we had just kept the non-negativity inequalities disaggregated. It is clear that all CG cuts are aggregation-based cuts, namely $\C(Q) \supseteq \A(Q)$.
	
	In order to understand the power of aggregations for generating cuts of these families, we define the 1-row (or non-aggregated) version of these closures. The \emph{1-row CG closure} $1\C(Q)$ is defined as the intersection of the CG closures of the individual inequalities defining $Q$, together with variable bounds; more precisely, for a packing polyhedron $Q$ $$1\C(Q) = \bigcap_{i \in [m]} \C(\{x \in \R^n_+ \mid A^ix \le b_i\}),$$ and for a covering polyhedron with bounds we have $$1\C(Q) = \bigcap_{i \in [m]} \C(\{x \in \R^n_+ \mid A^ix \ge b_i, \ x \le u\}),$$ where $\Ai$ denotes the $i^{\text{th}}$ row of $A$. The \emph{1-row closure} $1\A(Q)$ is defined analogously, simply replacing the operator $\C(.)$ by $\A(.)$.
	%For example, 
%$$1\A(Q) := \bigcap_{i \in [m]} \conv(\{x \in \Z^n_+ \mid \Ai x \le b_i, ~x \le u \}),$$
% where we denote by $\Ai$ the $i^{\text{th}}$ row of $A$, for $i\in [m]$.
	
Given a packing polytope $Q$ and a non-negative objective function $c \in \R^n_+$, we define $$z^{1\C} := \max\{c^\top x \mid x \in 1\C(Q)\}$$ as the optimal value over the closure $1\C(Q)$, and similarly for all the other closures, namely $z^{1\A},z^{\A},z^{\C}$. Moreover, we use $z^I$ and $z^{LP}$ to denote the optimal objective function value over $Q$ and its linear programming (LP) relaxation, respectively. For covering integer sets (with bounds) ``$\max$'' is replaced with ``$\min$''.

	We can generalize the aggregation closure to consider simultaneously $k$ aggregations, where $k \in \Z$ and $k \geq 1$. More precisely, for a covering polyhedron $Q$ the \emph{$k$-aggregation} closure is defined as 
	\begin{align*}
		\A_k(Q) := \bigcap_{\lambda^1, \ldots, \lambda^k \in \R^m_+} 
		\conv(\{ x \in \Z^n_+ \mid 
		(\lambda^j)^\top A x \ge (\lambda^j)^\top b~
		\forall j\in[k],  
		~x \le u \}),
	\end{align*}
	and the definition is similar for the packing case.
	
	More generally, given a packing \emph{set} $Q$, its $k$-aggregation closure $\A_k(Q)$ is defined as the intersection of all sets $\conv(\{x \in \Z^n_+ \mid D^j x \le f_j ~\forall j \in [k]\})$ where each of the $k$ rows $D^j x\le f_j$ is a valid inequality for $Q$ with non-negative coefficients. Similarly, given a covering set with bounds $Q$, $\A_k(Q)$ is defined as the intersection of all sets $\conv(\{x \in \Z^n_+ \mid D^j x \ge f_j ~\forall j \in [k], ~x \le u\})$ where each $D^j x\ge f_j$ is a valid inequality for $Q$ with non-negative coefficients. Notice that these definitions are independent of the representation of $Q$, and in the polyhedral case a duality argument shows that they are equivalent to the aggregation-based ones given above. %for $\A = \A_1$ and $\A_k$.

	The \emph{$k$-aggregation closure rank}, denoted by $\rank_{\A_k}(Q)$, is defined in the standard way: it is the minimum number of applications $\A_k(\A_k(\ldots \A_k(Q) \ldots))$ of $A_k$ in order to obtain the convex hull of $Q$. Notice that if $Q$ is a packing (resp. covering) set, $\A_k(Q)$ is a packing (resp. covering) set, so iterating the closure $\A_k$ is a well-defined operation; we will formally verify this later. Moreover, since the CG rank is always finite \cite{schrijver1980cutting}, and the aggregation closure of $Q$ is contained in the CG closure of $Q$, we have that $\rank_{\A_k}(Q)$ is always finite.

	\paragraph{Approximation.} Given two packing sets $U \supseteq V$, we say that $U$ is an \emph{$\alpha$-approximation} of $V$
%	a packing polyhedron (or more generally packing set) $Q$, two relaxations $U \supseteq V \supseteq Q^I$ of the integer hull, and $\alpha \ge 1$\red{M: we might not need to put $\alpha \ge 1$, follows from $U \supseteq V$}, we say that $U$ is an \emph{$\alpha$-approximation} of $V$ 
if for all non-negative objective functions $c \in \R^n_+$ we have $$\max\{c^\top x \mid x \in U\} \le \alpha \cdot \max\{c^\top x \mid x \in V\}.$$ 
Notice that since $U \supseteq V$, we have $\alpha \ge 1$.
%
	%If $V = \{x \in \R^n_+ \mid \Ai x \le b_i, \forall i=1,\dots,k \}$, then it is well-known \cite{basu:bo:co:ma:2009,Goe95} that $U$ is an \emph{$\alpha$-approximation} of $V$ if and only if $\Ai x \le \alpha b_i$ is valid for $U$ for every $i=1,\dots,k$.
%
Similarly, for a covering polyhedron (with bounds) $Q$, 
given two covering sets $U \supseteq V$
%relaxations $U \supseteq V \supseteq Q^I$, and $\alpha \ge 1$, 
we say that $U$ is an $\alpha$-approximation of $V$ if for all $c \in \R^n_+$ we have $$\min\{c^\top x \mid x \in U\} \ge \frac{1}{\alpha} \cdot \min\{c^\top x \mid x \in V\}.$$
%%%%%%%%%%%%%%%%%%%%%%%%%%%%%%%%%%%%
%%%%%%%%%%%%%%%%%%%%%%%%%%%%%%%%%%%%
	\subsection{Statement of results}
	\subsubsection{Packing} 

	The following is our main result comparing closures with their 1-row counterparts. 
	
	\begin{theorem}\label{thm:pack}
	Consider a packing polyhedron $Q$.
	%\red{M: Say $Q$ or $Q^I$ is non-empty? Might need to say throughout, can we say once?}. 
	Let $\mathcal{M}$ be any of the closures $\A$ (aggregation) or $\C$ (CG). Then  $1\mathcal{M}(Q)$ is a 2-approximation of $\mathcal{M}(Q)$.
		
	Moreover, this bound is tight, namely for every $\e > 0$ there is a packing polyhedron $Q$ such that $1\mathcal{M}(Q)$ is not a $(2-\e)$-approximation of $\mathcal{M}(Q)$.
\end{theorem}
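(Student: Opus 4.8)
The plan is to prove both directions by an explicit scaling argument: for every $x^* \in 1\mathcal{M}(Q)$ we will show that $\tfrac12 x^* \in \mathcal{M}(Q)$. Since $\mathcal{M}(Q) \subseteq 1\mathcal{M}(Q)$ — each original constraint $A^i x \le b_i$ is the aggregation with $\lambda = e_i$, and the CG closure is monotone under inclusion of polyhedra — this containment immediately gives that $1\mathcal{M}(Q)$ is a $2$-approximation of $\mathcal{M}(Q)$: for any $c \ge 0$ and any $x^*$ with $c^\top x^*$ close to $\sup\{c^\top x : x \in 1\mathcal{M}(Q)\}$, the point $\tfrac12 x^* \in \mathcal{M}(Q)$ has objective $\tfrac12 c^\top x^*$. (Working with suprema sidesteps the fact that $\A(Q)$ need not be polyhedral.)

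Two preliminary facts drive the argument. First, $1\mathcal{M}(Q) \subseteq \bigcap_i \{x \in \R^n_+ : A^i x \le b_i\} = Q$, so any $x^* \in 1\mathcal{M}(Q)$ satisfies $Ax^* \le b$, hence $\lambda^\top A x^* \le \lambda^\top b$ for all $\lambda \ge 0$. Second — and this is the structural ingredient that makes the factor $2$ work even when the aggregated right-hand side is small — if some original constraint $i$ has $A_{ij} > b_i$ for a coordinate $j$, then $x^*_j = 0$ for every $x^* \in 1\mathcal{M}(Q)$: the inequality $x_j \le b_i/A_{ij}$ is valid for $\{x \in \R^n_+ : A^i x \le b_i\}$ with $b_i/A_{ij} < 1$, so $x_j \le 0$ is a CG cut of that single-constraint polyhedron, and the integer hull of that polyhedron also lies in $\{x_j = 0\}$; either way $x^*_j = 0$.

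For $\mathcal{M} = \A$, fix $\lambda \ge 0$ and put $d = A^\top\lambda \ge 0$, $f = \lambda^\top b \ge 0$; we show $\tfrac12 x^* \in \conv\{z \in \Z^n_+ : d^\top z \le f\}$. Coordinates with $d_j > f$ have $x^*_j = 0$ (as $d_j > f$ forces $A_{ij} > b_i$ for some $i$); coordinates with $d_j = 0$ give recession directions $e_j$ of $\{z \in \Z^n_+ : d^\top z \le f\}$; and for each $j$ with $0 < d_j \le f$ write $\tfrac{x^*_j}{2} e_j = \mu_j \big(\lfloor f/d_j\rfloor e_j\big)$ with $\mu_j = \tfrac{x^*_j}{2\lfloor f/d_j\rfloor}$. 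Since $\lfloor t \rfloor \ge t/2$ for $t \ge 1$, we get $\sum_j \mu_j \le \tfrac1f \sum_j d_j x^*_j = \tfrac1f d^\top x^* \le 1$, so $\tfrac12 x^*$ is a convex combination of $0$ and the integer points $\lfloor f/d_j\rfloor e_j$ plus a nonnegative combination of recession directions, hence in the closure. For $\mathcal{M} = \C$, fix a CG cut $\gamma^\top x \le \lfloor\gamma_0\rfloor$ of $Q$ with $\gamma \in \Z^n$; by LP duality we may take $\gamma \le A^\top\lambda =: d$ and $\gamma_0 = \lambda^\top b =: f$ for some $\lambda \ge 0$. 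As $x^* \ge 0$ and $\gamma \le d$, we have $\gamma^\top x^* \le d^\top x^* \le f$. If $f \ge 1$ then $\gamma^\top(\tfrac12 x^*) \le f/2 \le \lfloor f\rfloor$; if $f < 1$ then $\lfloor\gamma_0\rfloor = 0$ and $\gamma^\top x^* \le 0$, because coordinates with $\gamma_j \ge 1$ have $d_j \ge 1 > f$ and hence $x^*_j = 0$, while coordinates with $\gamma_j \le 0$ contribute nonpositively. Combined with $\tfrac12 x^* \in Q$, this gives $\tfrac12 x^* \in \C(Q)$.

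For tightness take $Q = \{x \in \R^2_+ : kx_1 + x_2 \le k,\ x_1 + kx_2 \le k\}$ and $c = (1,1)$. Each defining polyhedron has only integral vertices, so $1\C(Q) = 1\A(Q) = Q$ and $z^{1\mathcal{M}} = \tfrac{2k}{k+1}$ (attained at $x_1 = x_2 = \tfrac{k}{k+1}$). Aggregating the two constraints with weights $\big(\tfrac1{k+1},\tfrac1{k+1}\big)$ and rounding yields the cut $x_1 + x_2 \le 1$, which is valid for both $\A(Q)$ and $\C(Q)$, so $z^{\mathcal{M}} = 1$; thus $1\mathcal{M}(Q)$ fails to be a $(2-\e)$-approximation as soon as $\tfrac{2k}{k+1} > 2-\e$, i.e. for $k > \tfrac2\e - 1$.

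The step I expect to be most delicate is exactly the interaction between the small right-hand side case ($f < 1$) and the structural fact of the second paragraph: plain $\tfrac12$-scaling of a point of $Q$ need not land in $\mathcal{M}(Q)$ when $f$ is tiny, and one genuinely needs that the single-constraint closures have already zeroed out the variables responsible for $d_j > f$. The remaining bookkeeping — monotonicity of the CG closure, the reduction of a general CG cut of $Q$ to the form $\gamma \le A^\top\lambda$ via LP duality, and handling unbounded $Q$ through recession directions — is routine.
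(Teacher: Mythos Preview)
Your proof is correct and shares the paper's key structural insight --- that $1\mathcal{M}(Q)$ already forces $x_j = 0$ whenever $A_{ij} > b_i$ for some $i$ --- but takes a genuinely different route to the upper bound. The paper introduces an intermediate object, the \emph{pre-processed LP} $\LPstar(Q)$ (the LP with exactly those bad variables zeroed out), and proves the chain $z^{1\A} \le z^{1\C} \le z^{\LPstar} \le 2z^{\A} \le 2z^{\C}$; its central step $z^{\LPstar} \le 2z^{\A}$ is an LP rounding argument (the optimal LP vertex of a single-constraint packing set with bad variables removed has a single nonzero coordinate, so rounding it down loses at most a factor of two). You instead establish the containment $\tfrac12\cdot 1\mathcal{M}(Q) \subseteq \mathcal{M}(Q)$ directly, handling $\A$ via an explicit convex decomposition of $\tfrac12 x^*$ into the integer points $\lfloor f/d_j\rfloor\, e_j$ (using $\lfloor t\rfloor \ge t/2$ for $t \ge 1$), and handling $\C$ via a cut-by-cut verification with a separate case analysis for $\lambda^\top b < 1$. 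The paper's approach is more modular and in fact yields the slightly sharper cross-comparison $z^{1\C} \le 2z^{\A}$ (weakest non-aggregated closure against strongest aggregated one), which does not fall out of your argument; your approach is more self-contained, avoids the auxiliary $\LPstar$ object, and treats the CG case without routing through the aggregation closure. The tight example is identical in both.
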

	
In the proof of Theorem~\ref{thm:pack} we introduce a special polyhedral relaxation of the convex hull of a packing polyhedron $Q$ that we call the \emph{pre-processed LP}. In this pre-processed LP, we examine if $A_{ij} > b_i$ for some $i \in [m],~j\in [n]$, in which case we set $x_j$ to 0. The optimal objective function value of the pre-processed LP is denoted by $z^{\LPstar}$.
Two key arguments of our proof involve this polyhedral relaxation:
(i) in Proposition~\ref{prop:pack_1CandLP} we prove that both 1-row CG closure and 1-row closure of $Q$ are contained in the pre-processed LP;
(ii) in Proposition~\ref{prop:pack2}, we show that the pre-processed LP is a $2$-approximation to $\A(Q)$;  
see Figure \ref{fig:packProofSch}.

	\begin{figure}[h]
	\centering
	\includegraphics{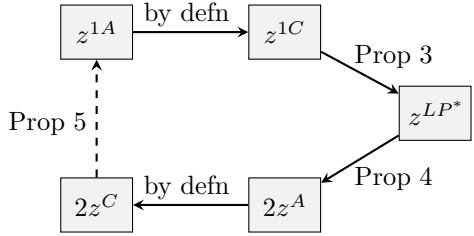}
	\caption{Relations used in the proof of Theorem \ref{thm:pack}. A straight arrow from $I$ to $J$ denotes the relation $I \leq J$, while a dashed arrow shows the existence of a tight example. Proposition numbers proving the relations are given on the arrows for the ones that are not implied by definitions (``by defn").}
	\label{fig:packProofSch}
	\end{figure}

The key take away of Theorem~\ref{thm:pack} is that for packing problems one can approximate the CG and aggregation closure by just considering their 1-row counterpart. We next show that this is not true in general. 

\begin{theorem}\label{thm:noncoverpack}
	Let $\mathcal{M}$ be any of the closures $\A$ (aggregation) or $\C$ (CG). Then there is a family of (non-packing/non-covering) polyhedra for which $1\mathcal{M}$ is an arbitrarily bad approximation to $\mathcal{M}$, namely for each $\alpha \ge 0$ there is a polyhedron $P$ such that $1\mathcal{M}(P)$ is not an $\alpha$-approximation of $\mathcal{M}(P)$.
\end{theorem}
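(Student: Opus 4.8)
The plan is to produce, once and for all, a single two-dimensional polyhedron $P$ together with a non-negative objective $c$ such that the optimum of $c^\top x$ over $\mathcal{M}(P)$ equals $0$ while the optimum over $1\mathcal{M}(P)$ is strictly positive. Since an $\alpha$-approximation would force $\max\{c^\top x : x \in 1\mathcal{M}(P)\} \le \alpha\cdot\max\{c^\top x : x \in \mathcal{M}(P)\} = \alpha\cdot 0 = 0$, such a $P$ refutes the $\alpha$-approximation property for \emph{every} $\alpha \ge 0$ at once, which is exactly the statement. Concretely I would take
$$P = \{x \in \R^2 : 2x_1 + x_2 \le 1,\ -x_1 + x_2 \le \tfrac12\}, \qquad c = (1,1)$$
(clearing the denominator, e.g.\ writing the constraints as $4x_1+2x_2\le 2$ and $-2x_1+2x_2\le 1$, keeps the data integral and changes nothing below). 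Note $P$ is neither packing nor covering: it is defined by $\le$-constraints, one of which has a negative coefficient, and there are no sign restrictions.

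Before computing, I would note that for a general polyhedron it suffices to treat $\mathcal{M} = \C$. Writing $P = \{x : Ax \le b\}$ with $A,b$ of arbitrary sign, each set $\conv(\{x \in \Z^n : (\lambda^\top A)x \le \lambda^\top b\})$ appearing in the definition of $\A(P)$ is simply a Chv\'atal--Gomory-rounded half-space, so $\A(P) = \C(P)$; and likewise, since the integer hull of a single rational half-space is a half-space, $1\A(P) = 1\C(P)$. Hence the same family works for both closures.

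The two remaining ingredients are short calculations. For the $1$-row closure: the row $(2,1)$ is primitive with integral right-hand side, so $\C(\{x\in\R^2:2x_1 + x_2 \le 1\}) = \{2x_1 + x_2 \le 1\}$; the row $(-1,1)$ is primitive, so $\C(\{x\in\R^2:-x_1 + x_2 \le \tfrac12\}) = \{-x_1 + x_2 \le 0\}$. Thus $1\mathcal{M}(P) = \{2x_1 + x_2 \le 1,\ -x_1 + x_2 \le 0\}$, and the maximum of $x_1 + x_2$ over this polyhedron is attained at $(\tfrac13,\tfrac13)$, with value $\tfrac23 > 0$. For the full closure: aggregating the two constraints with the multiplier $\lambda = (\tfrac23,\tfrac13) \in \R^2_+$ yields the valid inequality $x_1 + x_2 \le \tfrac56$, whose integer hull is $\{x_1 + x_2 \le 0\}$; therefore $\mathcal{M}(P) \subseteq \{x_1 + x_2 \le 0\}$, so $\max\{x_1+x_2 : x\in\mathcal{M}(P)\} \le 0$, and since $(0,0) \in P \cap \Z^2 \subseteq \mathcal{M}(P)$ we get equality. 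Combining, $\max\{c^\top x : x \in 1\mathcal{M}(P)\} = \tfrac23 > 0 = \max\{c^\top x : x \in \mathcal{M}(P)\}$, as required.

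I do not expect a genuine technical obstacle; the only thing that needs care is the \emph{choice} of $P$: the two rows must be individually Chv\'atal--Gomory-closed (primitive with appropriate right-hand sides, so that $1\mathcal{M}$ leaves them essentially untouched) while a suitable \emph{fractional} aggregation multiplier produces a cut of much larger ``depth''. It is also worth recording a conceptual point: at the level of a single closure a gap of this type is essentially forced to be of the above degenerate form, with value $0$ over $\mathcal{M}(P)$. Indeed, as soon as $c \ge 0$ and some aggregation already produces a valid inequality bounding $c^\top x$ by a small constant, both $\max_{\mathcal{M}(P)} c^\top x$ and $\max_{1\mathcal{M}(P)} c^\top x$ are controlled by that constant, so an unbounded \emph{finite} ratio cannot occur for one-shot closures. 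This is precisely why the sharper quantitative statements for packing and covering (Theorem~\ref{thm:pack} and the $k$-aggregation closure rank results) iterate the closure rather than compare single closures.
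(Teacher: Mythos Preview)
Your proof is correct. The example $P=\{x\in\R^2:2x_1+x_2\le 1,\ -x_1+x_2\le\tfrac12\}$ with $c=(1,1)$ does give $z^{\mathcal{M}}=0$ and $z^{1\mathcal{M}}=\tfrac23$, so no $\alpha$-approximation holds for any $\alpha\ge 0$. Your observation that, in the absence of variable bounds, the integer hull of a single rational half-space is again a half-space (hence $\A=\C$ and $1\A=1\C$) is a clean way to treat both closures at once.

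The paper takes a different route: it builds a \emph{parametrized} family $P_k\subseteq\R^2_+$ (with non-negativity kept as disaggregated bounds) for which $z^{\C}\le 2$ while $z^{1\A}\ge k+2-\tfrac1k$, so the ratio $z^{1\A}/z^{\C}$ grows without bound while \emph{both} values stay strictly positive. Your approach is shorter and needs only one polyhedron; the paper's construction is more informative in that it exhibits the phenomenon with finite positive optima and simultaneously shows that $z^{LP}/z^{I}$ can be arbitrarily large even when the CG rank is one.

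One caveat: your closing ``conceptual'' remark is not right. You argue that if some aggregation produces a valid bound $c^\top x\le\beta$, then both $z^{\mathcal{M}}$ and $z^{1\mathcal{M}}$ are controlled by $\beta$, so an unbounded \emph{finite} ratio is impossible. But $\mathcal{M}(P)$ can combine a CG cut that does \emph{not} directly bound $c^\top x$ with an original constraint to obtain a much tighter bound on $c^\top x$; this is exactly what happens in the paper's family (the cut $x_1\le 1$ together with $-kx_1+x_2\le -k+1$ forces $x_1+x_2\le 2$, while $z^{1\A}\approx k$). So finite, arbitrarily large ratios \emph{do} occur for one-shot closures. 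This remark is not used in your argument and can simply be dropped.
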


The proof of Theorem \ref{thm:noncoverpack} gives a family of polyhedra in $\R^2$ where $\frac{z^{LP}}{z^I}$ can be arbitrarily large, but the CG rank is one.

On the other hand, we relate the integrality gap to the aggregation-closure rank. While there are many lower bounds on CG ranks (and reverse CG rank)~\cite{ChvatalCH89,ConfortiPSFGCG15,PokuttaS11,RothvossS13}, to the best of our knowledge there are no results for the aggregation closure. Moreover, our next lower bound adds to the list of few results~\cite{PokuttaS11,SinghT10} that relate integrality gaps to rank. 

\begin{theorem}\label{thm:packrankgen}
Let $Q = \{ x \in \R_+^n \mid A x \leq b \}$ be a packing polyhedron with $A_{ij} \leq b_i$ for all $i \in [m],~ j\in [n]$. Then, $\rank_{\A_k}(Q) \geq \left\lceil\frac{\textup{log}_2\left( \frac{z^{LP}}{z^I}\right)}{\textup{log}_2(k + 1)}\right\rceil$ for $k \geq 1$. Moreover, this bound is tight for $k=1$, that is, there is a packing polyhedron $Q$ with $\rank_{A_1}(Q) \le O\left(\textup{log}_2\left( \frac{z^{LP}}{z^I}\right)\right)$.
\end{theorem}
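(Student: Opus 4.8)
The plan is to show that a single application of the closure $\A_k$ can reduce the optimal value by no more than a factor $k+1$, and then iterate. Call a packing set $U\subseteq\R^n_+$ \emph{unit-feasible} if $e_j\in U$ for every $j\in[n]$; for a packing polyhedron $\{x\in\R^n_+\mid Ax\le b\}$ this is precisely the hypothesis $A_{ij}\le b_i$ for all $i,j$. The crux of the argument is the following statement, which generalizes Proposition~\ref{prop:pack2} (the case $k=1$, for polyhedra, where $z^{\LPstar}=z^{LP}$): for every unit-feasible packing set $U$ and every $c\in\R^n_+$,
\[
 \max\{c^\top x\mid x\in U\}\ \le\ (k+1)\cdot\max\{c^\top x\mid x\in\A_k(U)\},
\]
i.e.\ $U$ is a $(k+1)$-approximation of $\A_k(U)$.

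Granting this, the theorem follows quickly. One first checks (as promised in Section~\ref{sec:DefnStatements}) that $\A_k$ sends packing sets to packing sets and that $\conv(U\cap\Z^n)\subseteq\A_k(U)\subseteq U$, so iterating $\A_k$ is well defined and, after finitely many steps, reaches $\conv(Q\cap\Z^n)$. Moreover unit-feasibility is preserved: if $e_j\in U$ then $e_j\in U\cap\Z^n\subseteq\conv(U\cap\Z^n)\subseteq\A_k(U)$. Hence, fixing any $c\in\R^n_+$ (the degenerate cases $z^I\in\{0,\infty\}$ being trivial, so assume $0<z^I\le z^{LP}<\infty$) and writing $U_0:=Q$, $U_{i+1}:=\A_k(U_i)$, $r:=\rank_{\A_k}(Q)$ (so $U_r=\conv(Q\cap\Z^n)$), each $U_i$ is unit-feasible and the displayed inequality gives
\[
 z^{LP}=\max_{x\in U_0}c^\top x\le(k+1)\max_{x\in U_1}c^\top x\le\cdots\le(k+1)^r\max_{x\in U_r}c^\top x=(k+1)^r z^I.
\]
Rearranging yields $r\ge\log_{k+1}(z^{LP}/z^I)=\log_2(z^{LP}/z^I)/\log_2(k+1)$, and since $r$ is an integer we round up.

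To prove the key inequality I would take an LP-optimal $x^*$ for $U$; since a packing set is down-closed, $0\le x^*_j\le\max\{x_j\mid x\in U\}$ for every $j$, and (after discarding null columns, on which we may take $c_j=0$) the optimum is finite and attained. Split $[n]$ into the ``large'' coordinates $S=\{j:x^*_j\ge1\}$ and the ``small'' ones $T=\{j:x^*_j<1\}$. The vector obtained from $\lfloor x^*\rfloor$ by zeroing the coordinates in $T$ is an integer point of $U$ (down-closedness), hence lies in $\A_k(U)$, and $\lfloor x^*_j\rfloor\ge x^*_j/2$ on $S$ recovers at least half of $\sum_{j\in S}c_jx^*_j$. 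For the small coordinates one exploits that unit-feasibility forces $D_j\le f$ for every valid inequality $D^\top x\le f$ with $D\ge0$; this controls how deeply a single --- hence any $k$-fold --- aggregation can cut near the axes, and lets one absorb a suitably scaled, possibly fractional, point supported on $T$ into $\A_k(U)$ at bounded cost. Balancing the two contributions gives a point of $\A_k(U)$ with value at least $\tfrac1{k+1}c^\top x^*$; the factor $k+1$ (rather than $2$) reflects the loss from requiring the point to survive $k$ aggregated inequalities at once.

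For the tightness claim ($k=1$) the plan is to exhibit an explicit family $\{Q_r\}$ of packing polyhedra, built recursively so that one application of $\A_1$ essentially reduces $Q_r$ to a scaled copy of $Q_{r-1}$ while halving the relevant optimal value, giving $\rank_{\A_1}(Q_r)=O(\log_2(z^{LP}/z^I))$; combined with the lower bound this pins the rank at $\Theta(\log_2(z^{LP}/z^I))$. The main obstacle is the key inequality, specifically the small/fractional coordinates: one must produce a point of $\A_k(U)$ --- not merely of $\conv(U\cap\Z^n)$, whose optimum is only $z^I$ --- that survives \emph{every} $k$-fold aggregation, while keeping the total loss down to $k+1$, which for $k=1$ leaves essentially no slack. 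Extending from packing polyhedra to general packing sets, needed so that the iteration applies to the possibly non-polyhedral sets $\A_k(U_i)$, is a further point to handle with care.
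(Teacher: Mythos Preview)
Your high-level strategy --- prove that one round of $\A_k$ loses at most a factor $k+1$, then telescope --- is exactly the paper's. The preservation of unit-feasibility and the iteration argument are fine.

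The gap is in your proof of the key inequality. Your decomposition of the LP optimum $x^*\in U$ into large coordinates $S=\{j:x^*_j\ge1\}$ and small ones $T$ does not naturally produce the factor $k+1$. On $S$ you only get a factor $2$ via $\lfloor x^*_j\rfloor\ge x^*_j/2$, and on $T$ you give no quantitative argument: ``absorb a suitably scaled \ldots\ point into $\A_k(U)$ at bounded cost'' is precisely where the whole difficulty lies, since the point must lie in $(P_{(D,f)})^I$ for \emph{every} $k$-vi $(D,f)$, and $|T|$ can be arbitrarily large. Nothing in your sketch bounds $\sum_{j\in T}c_jx^*_j$ in terms of $k$ and $\max_{\A_k(U)}c$, and no ``balancing'' of a factor-$2$ term and an unspecified term yields $k+1$.

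The paper does not work with $x^*\in U$ at all. It first proves the clean lemma that for a \emph{$k$-row} packing polyhedron $P=\{x\in\R^n_+:Dx\le f\}$ with $D_{ij}\le f_i$, one has $z^{LP}(P)\le(k+1)\,z^I(P)$: a vertex optimum of such an LP has at most $k$ nonzero coordinates, so the fractional part $x^{LP}-\lfloor x^{LP}\rfloor$ contributes at most $k\cdot c_{\max}$, while unit-feasibility gives $c_{\max}\le z^I$ and $c^\top\lfloor x^{LP}\rfloor\le z^I$. This is where $k+1$ actually comes from. Then, for each $k$-vi $(D,f)$ one gets $P_{(D,f)}\subseteq(k+1)(P_{(D,f)})^I$ via the dilation characterization of approximation, and intersecting over all $(D,f)$ (using that dilation commutes with intersection) yields $Q^\ell\subseteq(k+1)\,Q^{\ell+1}$. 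In short: reduce to the per-aggregation polyhedron first, where the $k$-row vertex structure does all the work; do not try to build a single witness point in $\A_k(U)$ directly.

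For tightness at $k=1$ the paper does not build a recursive family; it takes $Q=FSTAB(K_n)$, the edge relaxation of the stable-set polytope of $K_n$, with objective $\sum_v x_v$, so $z^{LP}=n/2$, $z^I=1$, and invokes the classical fact that the clique inequality $\sum_v x_v\le1$ has CG rank $\lceil\log_2(n-1)\rceil$, hence $\rank_{\A_1}(Q)\le\rank_{\C}(Q)=O(\log_2(z^{LP}/z^I))$.
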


Theorem \ref{thm:packrankgen} shows that as long as we use information from a fixed number of constraints, packing IPs can take many rounds of cuts to obtain the integer hull. We remark that this result actually holds for packing sets defined by \emph{infinitely many} inequalities, see the proof of Theorem \ref{thm:packrankgen}. We also note that it can be verified that $\A_k$ is an \emph{admissable} cutting-plane operator, and therefore there exist 0-1 polytopes (with empty integer hulls) with rank $\Omega(\frac{n}{\log n})$ \cite{pokutta2010rank}.

%###########################################################
%###########################################################

\subsubsection{Covering}

	We show that the 1-row closures also provide a good approximation to the full closures in the case of covering polyhedra (with bounds). 
	
	\begin{theorem}\label{thm:cover}
	Consider a covering polyhedron (with bounds) $Q$. Let $\mathcal{M}$ be any of the closures $\A$ (aggregation) or $\C$ (CG). Then $1\mathcal{M}(Q)$ is a 2-approximation of $\mathcal{M}(Q)$.
	
	Moreover, this bound is tight, namely for every $\e > 0$ there is a covering polyhedron (with  bounds) such that $1\mathcal{M}(Q)$ is not a $(2-\e)$-approximation of $\mathcal{M}(Q)$.
\end{theorem}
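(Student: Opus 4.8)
Exactly as in the proof of Theorem~\ref{thm:pack} it suffices to show that for every $c\in\R^n_+$ one has $z^{\A}\le 2\,z^{1\C}$. Indeed $\A(Q)\subseteq\C(Q)\subseteq 1\C(Q)$ and $1\A(Q)\subseteq 1\C(Q)$: the middle inclusion is monotonicity of $\C(\cdot)$ applied to $Q\subseteq\{x\in\R^n_+\mid A^ix\ge b_i,\ x\le u\}=:Q_i$, and the last follows since $\A(Q_i)=\conv(\{x\in\Z^n_+\mid A^ix\ge b_i,\ x\le u\})\subseteq\C(Q_i)$. Hence $z^{\A}\ge z^{\C}\ge z^{1\C}$ and $z^{1\A}\ge z^{1\C}$, so $z^{\A}\le 2z^{1\C}$ gives at once $z^{1\C}\ge\tfrac12 z^{\A}\ge\tfrac12 z^{\C}$ (the $2$-approximation for $\mathcal M=\C$) and $z^{1\A}\ge z^{1\C}\ge\tfrac12 z^{\A}$ (the one for $\mathcal M=\A$).

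\textbf{A pre-processed LP for covering.} In the packing case the pre-processing zeroes out a variable whenever a single constraint forbids it ($A_{ij}>b_i$). The covering counterpart pins a variable to its upper bound: call $x_j$ \emph{forced} if for some $i$ one has $A_{ij}+b_i>\sum_k A_{ik}u_k$. Then $x_j=u_j$ at every integer point of $Q_i$, and moreover $x_j\ge u_j$ is a Chv\'atal--Gomory cut of $Q_i$: combining $A^ix\ge b_i$ and the bounds $x_k\le u_k$ ($k\ne j$) with CG multipliers $1/A_{ij}$ and $A_{ik}/A_{ij}$ gives $x_j\ge u_j+(b_i-\sum_k A_{ik}u_k)/A_{ij}$, whose right-hand side rounds up to $u_j$ precisely because of the forcing condition. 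Let $\LPstar$ be the LP relaxation of $Q$ intersected with the hyperplanes $\{x_j=u_j\}$ over all forced $j$. The first step is then the covering analogue of Proposition~\ref{prop:pack_1CandLP}: $1\C(Q)\subseteq\LPstar$ and $1\A(Q)\subseteq\LPstar$ (hence $z^{1\C},z^{1\A}\ge z^{\LPstar}$), which is immediate from the above since $1\C(Q)\subseteq\C(Q_i)$ and $1\A(Q)\subseteq\A(Q_i)$ for every $i$.

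\textbf{The main estimate, and the obstacle.} It remains to prove the covering analogue of Proposition~\ref{prop:pack2}: that $\LPstar$ is a $2$-approximation of $\A(Q)$, i.e.\ $z^{\A}\le 2z^{\LPstar}$. Fix $c\in\R^n_+$, let $x^{*}$ attain $z^{\LPstar}$, and consider $y_j=\min\{u_j,2x^{*}_j\}$ (possibly with a correction on coordinates where $x^{*}_j$ is small — see below). Then $x^{*}\le y\le u$, so $y$ satisfies every aggregated inequality $\lambda^{\top}Ax\ge\lambda^{\top}b$, and $c^{\top}y\le 2c^{\top}x^{*}$; hence it is enough to show $y\in\A(Q)$, i.e.\ $y\in\conv(\{x\in\Z^n_+\mid d^{\top}x\ge\delta,\ x\le u\})$ for every aggregated inequality $d^{\top}x\ge\delta$ with $d=\lambda^{\top}A$, $\delta=\lambda^{\top}b$, $\lambda\ge 0$. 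This is the crux: $y$ is only known to lie in the \emph{LP relaxation} $\{x\in[0,u]\mid d^{\top}x\ge\delta\}$ of this single-row covering set, which — unlike in the packing case, where $\tfrac12 x^{*}$ trivially lands in the relevant single-row integer hull because the pre-processing forces $d_j\le\delta$ on its support — is in general strictly larger than the integer hull even for one inequality with variable bounds. So the heart of the proof is a stand-alone structural result on the integer hull of a \emph{single} covering inequality with variable bounds: a verifiable sufficient condition for a point of $[0,u]$ to lie in $\conv(\{x\in\Z^n_+\mid d^{\top}x\ge\delta,\ x\le u\})$. The mechanism is that $\lceil z\rceil$ always lies in this set whenever $z\in[0,u]$ satisfies the inequality, so a point of the box can be written as a convex combination of coordinatewise roundings of itself that remain feasible, provided the residual slack $d^{\top}y-\delta$ is large enough to absorb the downward roundings — and the doubling $y\ge 2x^{*}$ on the uncapped coordinates is exactly what creates this slack (using $d^{\top}x^{*}\ge\delta$), the coordinates $j$ forced by $d^{\top}x\ge\delta$ turn out to be globally forced and hence sit at $u_j$ in $y$, and the awkward coordinates with $x^{*}_j$ small (where doubling does not cross an integer) must be controlled via the Chv\'atal--Gomory cuts of the $Q_i$'s satisfied by $x^{*}$ (through $x^{*}\in\LPstar\subseteq\C(Q_i)$). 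I expect this single-row structural lemma, together with verifying that $y$ meets its hypotheses \emph{simultaneously for all} aggregations $\lambda$, to be the main difficulty, and to be the ``new structural result'' referred to in the introduction.

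\textbf{Tightness.} For the matching lower bound I would construct, for each $\e>0$, an explicit family of covering polyhedra with bounds — a covering analogue of the tight family used for Theorem~\ref{thm:pack} — in which every individual constraint is already Chv\'atal--Gomory- and aggregation-closed, so that $1\C(Q)=1\A(Q)$ coincides with its LP relaxation and $z^{1\C}$ is small, while a single well-chosen aggregation, after one Chv\'atal--Gomory round, yields essentially the integer hull, so that $z^{\A}$ is large; a direct computation then gives $z^{\A}/z^{1\C}\to 2$, so $1\mathcal M(Q)$ fails to be a $(2-\e)$-approximation of $\mathcal M(Q)$.
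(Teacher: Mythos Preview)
Your pre-processed LP is too weak, and this is a genuine gap rather than a detail to be filled in. Take the single-row problem $3x_1+2x_2\ge 5$, $x\in\{0,1,2\}^2$, minimize $x_1$. Neither variable is forced in your sense ($3+5=8<10$ and $2+5=7<10$, where $10=\sum_k A_{1k}u_k$), so your $\LPstar$ coincides with the LP relaxation and $z^{\LPstar}=1/3$. But $z^{\A}=z^I=1$, giving a ratio of $3$. Thus the key step ``$\LPstar$ is a $2$-approximation of $\A(Q)$'' is simply false, and no single-row rounding lemma applied to $y=\min\{u,2x^*\}$ can repair it: in this example $y=(2/3,2)$, which is not even in the LP integer hull $\conv\{(1,1),(1,2),(2,0),(2,1),(2,2)\}$. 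Your parenthetical $\LPstar\subseteq\C(Q_i)$ is likewise false here: $x_1\ge 1$ is a rank-1 CG cut of $Q_1$ (combine $3x_1+2x_2\ge 5$ with $-2x_2\ge -4$, divide by $3$, round), so $(1/3,2)\in\LPstar\setminus\C(Q_1)$.

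What the paper does instead differs in two essential ways. For the aggregation closure, the right ``pre-processed LP'' is not forcing but the full \emph{knapsack-cover closure} $KC(Q)$, and the engine is the theorem of Carr et al.\ that $KC$ of a single bounded covering row is already a $2$-approximation of its integer hull; one then shows $KC(Q)\subseteq KC(Q_\lambda)$ for every aggregation $\lambda$ and intersects using the covering analogue of Proposition~\ref{prop:approxPack} (this requires the structural facts that $Q_\lambda^I$ is again covering-with-bounds with the \emph{same} bounds, so that ``$+ \R^n_+$'' commutes with the intersection). For the CG closure, the paper does \emph{not} route through $\A$ or $KC$ at all --- indeed it argues that $z^{\C}$ can be arbitrarily smaller than $z^{KC}$, so your unified reduction ``prove $z^{\A}\le 2z^{1\C}$ and read off both cases'' is not the path taken. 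Instead the bounded case is reduced to the unbounded one by passing to $\bar P:=P+\R^n_+$: one shows $z^{\C}\le\bar z^{\C}$ and $z^{1\C}\ge\bar z^{1\C}$, using that every covering inequality of $\bar P$ is a conic combination of a single covering inequality of $P$ with upper-bound constraints, and then handles the unbounded case directly by a short CG-rank argument. Your tightness plan is fine; the paper's explicit family is $x_i+2\sum_{j\ne i}x_j\ge 2$ for $i\in[n]$, $x\in\Z^n_+$, with objective $\sum_j x_j$.
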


 The key arguments of our proof are presented in Figure \ref{fig:coverProofSch}. As in the packing case, the main handle to prove this result is a pre-processed version of the LP. For covering polyhedra (i.e., without bounds), this pre-processing is natural: If $A_{ij} > b_i$ for some $i \in [m],~ j \in [n]$, since we are interested only in integer solutions, it is sufficient to replace $A_{ij}$ by $b_i$ to obtain a tighter LP. For covering polyhedra with bounds, the pre-processing LP is heavier and is given by adding all the \emph{knapsack-cover (KC) inequalities} \cite{carr2000strengthening,wolsey:1975}. We note that in the absence of bounds, this LP with the KC inequalities reduces to the pre-processed LP discussed above. The optimal objective function value of the LP with the KC inequalities is denoted as $z^{KC}$.
 
 	\begin{figure}[h]
	\centering
	\includegraphics{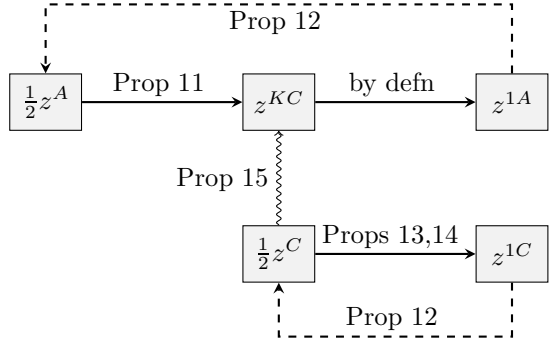}
	\caption{Relations used in the proof of Theorem \ref{thm:cover}. A straight arrow from $I$ to $J$ denotes the relation $I \leq J$, a dashed arrow shows the existence of a tight example, and a snake arrow means that the ratio could be arbitrarily large. Proposition numbers proving the relations are given on the arrows for the ones that are not implied by definitions (``by defn").}
	\label{fig:coverProofSch}
	\end{figure}

Unlike in the packing case, the statement of Theorem \ref{thm:cover} regarding the CG closure actually requires a different and much more involved proof. In fact, in this case we show that the LP with the KC inequalities cannot be used to prove this result: there are instances where the CG closure is arbitrarily weaker than the LP with the KC inequalities (see the snake arrow in Figure \ref{fig:coverProofSch}), i.e., for any $\bigL >0$ there exists an instance where $\bigL z^{\C} \leq z^{KC}$. Therefore $z^{\C}$ does not approximate $z^{KC}$ well, and hence it does not  approximate $z^{\A}$ well. We also refer the reader to \cite{bienstock2006approximate} for other techniques on approximating fixed rank CG closures for 0-1 covering IPs.

We note that in the proof of Theorem \ref{thm:cover}, we require some preliminary structural results regarding covering polyhedra with bounds, which may be of independent interest. See Propositions \ref{prop:coverUpward}-\ref{prop:commCover} in Section \ref{subsec:CoveringProperties}. 

%Should we attempt to summarize those results here? 
	
	As in the packing case, we can also prove that a large integrality gap implies large rank for the $k$-aggregation closure. Interestingly, the denominator of the lower bound scales as $\log \log k$; this is because the largest integrality gap in a covering problem with $m$ constraints is $O(\log m)$ (see \cite{vazirani2013approximation}).
	
\begin{theorem} \label{thm:rankCover}
Consider a covering polyhedron $Q = \{ x \in \R_+^n \mid Ax \geq b\}$, where $A$ and $b$ satisfy $A_{ij} \leq b_i$ for all $i \in [m],~j \in [n]$. Then, the rank of the k-aggregation closure of $Q$ is at least $\left \lceil  \left(\frac{\textup{log}_2\left(\frac{z^I}{z^{LP}}\right)}{3+\textup{log}_2\textup{log}_2(2k)}\right)\right \rceil$.
\end{theorem}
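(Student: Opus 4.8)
The plan is to bound how much one round of the $k$‑aggregation closure can shrink the integrality gap of a covering polyhedron, and then iterate. Write $R := \rank_{\A_k}(Q)$, fix a non‑negative objective $c$ attaining the ratio $z^I/z^{LP}$ for $Q$, and set $C_0 := Q$ and $C_{r+1} := \A_k(C_r)$, so that $C_R = \conv(Q\cap\Z^n)$; hence $\min\{c^\top x : x\in C_0\}=z^{LP}$ and $\min\{c^\top x : x\in C_R\}=z^I$ (assuming $Q\cap\Z^n\neq\emptyset$ and $z^{LP}>0$, otherwise there is nothing to prove). Everything reduces to the claim that there is a function $\gamma(k)=O(\log k)$, more precisely one with $\log_2\gamma(k)\le 3+\log_2\log_2(2k)$, such that $\min\{c^\top x : x\in C_{r+1}\}\le \gamma(k)\cdot\min\{c^\top x : x\in C_r\}$ for all $r$. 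Granting this, $z^I\le\gamma(k)^R z^{LP}$, so $R\ge \log_2(z^I/z^{LP})/\log_2\gamma(k)\ge \log_2(z^I/z^{LP})/(3+\log_2\log_2(2k))$, and rounding up gives the theorem.

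To prove the claim, fix $r$ and let $y\in C_r$ be an (approximately) optimal point of $C_r$; it suffices to show $\gamma(k)\cdot y\in C_{r+1}=\A_k(C_r)$, i.e.\ that for every choice of $k$ valid covering inequalities $D^1x\ge f_1,\dots,D^kx\ge f_k$ for $C_r$ (non‑negative data) we have $\gamma(k)\,y\in\conv(\{z\in\Z^n_+ : D^jz\ge f_j\ \forall j\in[k]\})$. Two preliminary reductions make this tractable. First, replacing each $D^j_l$ by $\min(D^j_l,f_j)$ changes neither the set of non‑negative integer points satisfying $D^jz\ge f_j$ nor, by the knapsack‑cover strengthening (cf.\ Theorem~\ref{thm:cover}), its convex hull; so we may assume $D^j_l\le f_j$ for all $j,l$. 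Second — and this is the point where the structural theory of covering polyhedra with bounds is needed — an induction using Propositions~\ref{prop:coverUpward}–\ref{prop:commCover} shows that each $C_r$ inherits the preprocessing property of $Q=C_0$: for every valid covering inequality $Dx\ge f$ of $C_r$, the strengthened inequality $\sum_l\min(D_l,f)\,x_l\ge f$ is again valid for $C_r$. (Indeed $\A_k$ of a preprocessed covering set is an intersection of convex hulls of integer points of $k$‑row systems, each of which, being a covering integer hull, is itself describable by preprocessed inequalities; and $\A_k(C_r)\subseteq C_r$, so the property is preserved.) Consequently $y\in C_r$ actually satisfies $\sum_l\min(D^j_l,f_j)\,y_l\ge f_j$ for each $j$, i.e.\ $y$ is feasible for the $k$‑row preprocessed covering polyhedron $P:=\{x\in\R^n_+ : D^jx\ge f_j,\ j\in[k]\}$ with $0\le D^j_l\le f_j$.

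It remains to prove the rounding lemma: if $P=\{x\in\R^n_+ : D^jx\ge f_j,\ j\in[k]\}$ with $0\le D^j_l\le f_j$ and $y\in P$, then $\gamma(k)\cdot y\in\conv(P\cap\Z^n)$. Independently round each coordinate of $\gamma(k)\,y$ up or down with its expectation preserved, obtaining $z\in\Z^n_+$ with $\mathbb{E}[z]=\gamma(k)\,y$. Splitting $D^jz$ into its deterministic floor part and its Bernoulli fractional part and using $D^j_l\le f_j$ together with $\mathbb{E}[D^jz]=\gamma(k)\,D^jy\ge\gamma(k)\,f_j$, a Chernoff‑type estimate bounds $\Pr[D^jz<f_j]$; choosing $\gamma(k)=\Theta(\log k)$ makes this at most $1/(2k)$, so a union bound over $j\in[k]$ gives $\Pr[z\in P\cap\Z^n]\ge 1/2$. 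Finally, since $\conv(P\cap\Z^n)$ is upward closed (Proposition~\ref{prop:coverUpward}), conditioning on $\{z\in P\cap\Z^n\}$ produces a point $w:=\mathbb{E}[z\mid z\in P\cap\Z^n]\in\conv(P\cap\Z^n)$ with $w\le\mathbb{E}[z]/\Pr[z\in P\cap\Z^n]\le 2\gamma(k)\,y$ coordinatewise; by upward closedness $2\gamma(k)\,y\in\conv(P\cap\Z^n)$. Absorbing this factor $2$ and tracking the constants in the concentration bound yields a $\gamma(k)$ with $\log_2\gamma(k)\le 3+\log_2\log_2(2k)$, completing the argument.

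The main obstacle is the second preliminary reduction, namely controlling $C_r$ well enough to know that the optimal point $y$ survives the knapsack‑cover strengthening of \emph{arbitrary} valid inequalities (i.e.\ that the chain $C_0\supseteq C_1\supseteq\cdots$ stays preprocessed). Without this, the rounding step genuinely fails: a fractional $y$ can satisfy $D^jy\ge f_j$ while relying on coordinates with tiny $y_l$ and huge $D^j_l$, and no bounded scaling of $y$ rounds to an integer feasible point. This is exactly where Propositions~\ref{prop:coverUpward}–\ref{prop:commCover} on covering polyhedra with bounds are essential; the remaining steps are a standard randomized‑rounding‑and‑union‑bound computation, and matching the exact constant $3$ in the denominator is a matter of optimizing the concentration inequality.
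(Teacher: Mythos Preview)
Your overall plan matches the paper's: show one round of $\A_k$ can reduce the covering gap by at most an $O(\log k)$ factor and telescope. The paper invokes the known integrality-gap bound for $k$-row preprocessed covering IPs (Theorem~\ref{thm:vazirani}) as a black box instead of re-deriving it by randomized rounding, but that is the same content.

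The gap is exactly where you flag it, the second reduction, but Propositions~\ref{prop:coverUpward}--\ref{prop:commCover} are not what resolves it. What you need is that each $C_r$ is \emph{well-behaved}: whenever $\alpha^\top x\ge\beta$ (with $\alpha,\beta\ge 0$) is valid for $C_r$, so is a dominating inequality with every coefficient at most the right-hand side. This is \emph{not} immediate from ``$C_r$ admits a description by such inequalities,'' and Propositions~\ref{prop:coverUpward}--\ref{prop:commCover} (about upward closure, covering structure of integer hulls, and commuting intersection with $+\R^n_+$) say nothing about the coefficient-versus-RHS structure of \emph{arbitrary} valid inequalities. The paper supplies two separate ingredients: (i) a generalized Farkas lemma, together with a positive-semi-independence argument showing a certain sum of cones is closed (needed precisely because $C_r$ is in general non-polyhedral), proves that for a well-behaved covering set every valid $(\alpha,\beta)$ is dominated by a valid $(\hat\alpha,\hat\beta)$ with $\hat\alpha_j\le\hat\beta$; (ii) a direct facet analysis shows that the integer hull of a preprocessed $k$-row covering system is again describable by preprocessed inequalities, so $\A_k$ preserves well-behavedness. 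Your parenthetical sketch gestures at (ii) but you are missing (i) entirely, and the non-polyhedrality of the iterates $C_r$ is a genuine technical obstacle you have not confronted.
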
 

As in the packing case, the proof of Theorem \ref{thm:rankCover} shows that this result also holds for covering sets defined by \emph{infinitely many} inequalities.
%###################################################
%###################################################
\subsubsection{Computational experiments}

Theorem \ref{thm:noncoverpack} shows that for general IPs (not packing or covering problems), the 1-row version of the closures may not provide an approximation to the full closure, thus indicating the usefulness of aggregation-based cuts. In order to understand this phenomenon, we conduct an empirical study using CG cuts. Experimenting with CG cuts is convenient due to the availability of reasonably robust CG cut separating algorithm \cite{fischetti:lo:2007}. We use IBM ILOG Cplex 12.6 as the LP/MILP solver. We study two classes of instances: random instances and the so-called market split instances.

\paragraph{Random instances.} We generate instances of the following form:
$$\max \Big\{ \sum_{j \in [n]} x_j \mid Ax = b,~0 \leq x \leq u \Big\},$$
where
\begin{enumerate}
\item We consider instances with $n \in \{ 10,12,14,16 \}$ variables and $m = \lfloor n/2 \rfloor$ equality constraints.
\item We choose $M = 50$ and set $u_j = M / 2$ for all $j \in [n]$.
\item For any $i \in [m], j \in [n]$, we let $A_{ij}=0$ with probability 0.5. Otherwise, we set $A_{ij}$ to an integer in $\{-M,\hdots,M\}$ with equal probability.
\item We construct $b$ by first generating a binary solution $\hat{x}$ uniformly at random, and then letting $b = A \hat{x}$.
\end{enumerate}
For each $n \in \{ 10,12,14,16 \}$, we generate 100 instances and discard the ones with $\frac{z^{LP}}{z^I} \leq 2$, after which we obtain 75, 83, 84, 84 instances, respectively. The results of this experiment is given in Figure \ref{fig:CGplot}, where each circle corresponds to a single instance. We observe that for the majority of the instances, the ratio $z^{1\C}/z^\C$ is significantly larger than 2. The arithmetic and geometric means of the ratio for different values of $n$ are $11.46,12.80,13.92,15.16$ and $5.68,7.46,8.51,9.94$, respectively.

\begin{figure}[h]
\centering
\includegraphics{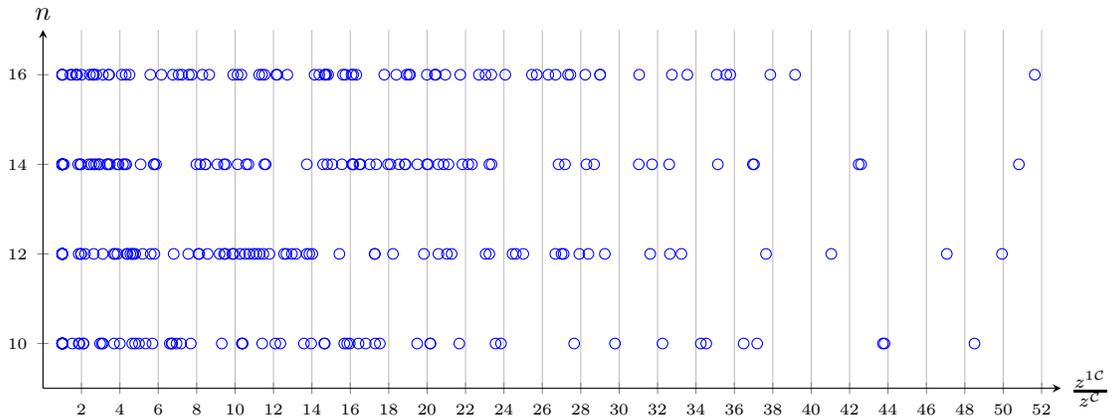}
\caption{Multiplicative gap between 1-row CG closure and CG closure of randomly generated instances}
\label{fig:CGplot}
\end{figure}

\paragraph{Market split instances.} This type of instances, also known as market share instances, are formulated in \cite{cornuejols:da:1999} and consist of a class of small 0-1 IPs that are very difficult for branch-and-cut solvers. We use the following parameters:
\begin{enumerate}
\item We consider instances with $m=2$ equality constraints and  $n = 10(m-1)$ variables.
\item We take $u_j = 1$ for all $j \in [n]$.
\item For any $i \in [m], j \in [n]$, we let $A_{ij}$ to be an integer drawn uniformly from $\{0,\hdots,D-1\}$, where $D = 50$.
\item We set $b_i = \lfloor \sum_{j=1}^n A_{ij}/2 \rfloor$, for all $i \in [m]$.
\end{enumerate}
It has been argued in \cite{cornuejols:da:1999} that most of those instances are infeasible. In this setting, we want to check how often 1-row CG closure detects infeasibility in comparison to the regular CG closure. We generated 100 instances, among which only 10 were feasible. %(the average $z^{LP}/z^I$ ratio was 1.24). 
The results of this experiment are presented in the table below. %Table \ref{tab:CG}. 
\begin{table}[h]
\centering
%\caption{Results of market share instances}
%\label{tab:CG}
  \begin{tabular}{ c c c }
    \hline 
    1-row CG & CG & \# instances \\
    \hline
    feasible & infeasible & 50 \\ 
    infeasible & infeasible & 40 \\ 
    feasible & feasible & 10 \\
    \hline
  \end{tabular}
\end{table}
As seen in the table, aggregation-based cuts are significantly better than 1-row cuts in proving infeasibility.
%Among the 10 instances where both 1-row CG and CG were feasible, 9 of them attained the ratio of 1, while one had the ratio of 1.06.

%%%%%%%%%%%%%%%%%%%%%%%%%%%%%%%%%%%%
%%%%%%%%%%%%%%%%%%%%%%%%%%%%%%%%%%%%
%%%%%%%%%%%%%%%%%%%%%%%%%%%%%%%%%%%%
%%%%%%%%%%%%%%%%%%%%%%%%%%%%%%%%%%%%
%%%%%%%%%%%%%%%%%%%%%%%%%%%%%%%%%%%%
%%%%%%%%%%%%%%%%%%%%%%%%%%%%%%%%%%%%
%%%%%%%%%%%%%%%%%%%%%%%%%%%%%%%%%%%%
%%%%%%%%%%%%%%%%%%%%%%%%%%%%%%%%%%%%

%\section{Discussion: Main take-aways of the paper}
\section{Some open questions}
\label{sec:Discussion}
%The first take-away of the results presented above is that for packing and covering IPs, one may consider cuts obtained from only the original constraints to approximate the aggregation closure well. On the other hand, for general IPs, aggregation cuts can be significantly stronger than the cuts from the original constraints. The second take-away is that the rank of the aggregation closure is quite high for packing and covering IPs, and depends on the logarithm of the multiplicative integrality gap.

Many interesting open questions can be pursued as future research. The first one is a structural question: Is the aggregation closure a polyhedron? For covering and packing IPs, if the constraint matrix $A$ defining the LP relaxation is \emph{dense} (i.e., every entry of $A$ is positive), then we can show that the closure is polyhedron, see Appendix \ref{sec:appendix}. However, the question remains open for the general case. Another question is to understand if we can restrict the set of aggregation multipliers to generate cuts for general IPs based on the sign pattern of the constraint matrix $A$ to approximate the overall aggregation closure well. 

%%%%%%%%%%%%%%%%%%%%%%%%%%%%%%%%%%%%
%%%%%%%%%%%%%%%%%%%%%%%%%%%%%%%%%%%%
%%%%%%%%%%%%%%%%%%%%%%%%%%%%%%%%%%%%
%%%%%%%%%%%%%%%%%%%%%%%%%%%%%%%%%%%%
%%%%%%%%%%%%%%%%%%%%%%%%%%%%%%%%%%%%
%%%%%%%%%%%%%%%%%%%%%%%%%%%%%%%%%%%%
%%%%%%%%%%%%%%%%%%%%%%%%%%%%%%%%%%%%
%%%%%%%%%%%%%%%%%%%%%%%%%%%%%%%%%%%%

\section{Packing problems}
\label{sec:Packing}

	In this section we present the proof for the statements regarding packing problems. 

	A crucial tool to analyze the infinite intersections 	arising in the aggregation closures is the following alternative characterization of $\alpha$-approximation, which is well-known in the covering polyhedral case \cite{goe95}; a quick proof is presented in Appendix \ref{app:charApproxPacking}.

	\begin{proposition} \label{prop:approxPack}
		Consider two packing sets $U \supseteq V$ in $\R^n$. Then $U$ is an $\alpha$-approximation of $V$ if and only if $U \subseteq \alpha V$.
		%\red{M: Issue: in the definition of $\alpha$-approximation we require $U, V$ to be relaxations of a packing polyhedron; this is technically not required, just to give context to differentiate the packing/covering approximation}.
	\end{proposition}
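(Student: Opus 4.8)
The plan is to prove the two directions separately, using the duality between membership and optimization that underlies the definition of $\alpha$-approximation. For the ``if'' direction, suppose $U \subseteq \alpha V$. Fix any non-negative objective $c \in \R^n_+$ and let $x^* \in U$ be a point attaining $\max\{c^\top x \mid x \in U\}$ (or, if the max is not attained, take a sequence approaching the supremum; in the packing case the relevant sets are bounded in the directions where $c$ is nonzero after the usual reductions, so I will handle the supremum by a limiting argument or simply argue on values directly). Since $x^* \in U \subseteq \alpha V$, we can write $x^* = \alpha v$ for some $v \in V$. Then $c^\top x^* = \alpha \, c^\top v \le \alpha \max\{c^\top x \mid x \in V\}$, which is exactly the inequality defining $\alpha$-approximation. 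This direction is essentially immediate.

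The ``only if'' direction is the substantive one. Suppose $U$ is an $\alpha$-approximation of $V$ but, for contradiction, $U \not\subseteq \alpha V$; pick $\bar x \in U \setminus \alpha V$. The idea is to separate $\bar x$ from the closed convex set $\alpha V$ by a hyperplane and then argue that the separating functional can be taken non-negative, using the fact that $V$ (hence $\alpha V$) is a packing set, i.e.\ downward-closed in $\R^n_+$ and contained in $\R^n_+$. Concretely, a packing set $V = \{x \in \R^n_+ \mid \Ai x \le b_i\ \forall i \in I\}$ has the property that $0 \in V$, $V \subseteq \R^n_+$, and $V$ is closed under decreasing coordinates within $\R^n_+$; the same holds for $\alpha V$. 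By the separating hyperplane theorem there is $c \in \R^n$ and $\gamma \in \R$ with $c^\top \bar x > \gamma \ge c^\top x$ for all $x \in \alpha V$. Since $0 \in \alpha V$ we get $\gamma \ge 0$. The key claim is that $c$ may be assumed non-negative: if some coordinate $c_j < 0$, then because $\alpha V$ is downward-closed in $\R^n_+$ one checks that replacing $c_j$ by $0$ keeps $c^\top x \le \gamma$ on $\alpha V$ (decreasing a coordinate of a point of $\alpha V$ keeps it in $\alpha V$, so the supremum of $c^\top x$ over $\alpha V$ is attained with $x_j$ as small as helpful), while it cannot decrease $c^\top \bar x$ since $\bar x \ge 0$ and we are zeroing out a negative contribution — so $c^\top \bar x$ only increases. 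Hence we may take $c \in \R^n_+$. Now $\max\{c^\top x \mid x \in \alpha V\} = \alpha \max\{c^\top x \mid x \in V\} \le \gamma < c^\top \bar x \le \max\{c^\top x \mid x \in U\}$, contradicting the assumption that $U$ is an $\alpha$-approximation of $V$ (which gives $\max\{c^\top x \mid x \in U\} \le \alpha \max\{c^\top x \mid x\in V\}$).

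The main obstacle I anticipate is the topological bookkeeping: the separating hyperplane theorem requires $\alpha V$ to be closed (or at least that $\bar x$ not be in its closure), and packing \emph{sets} arising from infinite intersections are indeed closed, so this should be fine, but one must be a little careful that $U$ and $V$ need not be polyhedral and that suprema need not be attained. I would address this by either (i) noting that all the sets in question are closed, so separation from a point outside applies directly, and handling non-attainment of the optimization via a standard $\e$-argument, or (ii) invoking the cited reference \cite{goe95} for the analogous covering statement and dualizing, since the packing case is formally the same argument with inequalities reversed. The sign-flipping step for $c$ is the one place where the packing (downward-closed, non-negative) structure is genuinely used, and it is worth spelling out carefully; everything else is routine convex separation.
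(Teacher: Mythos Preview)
Your proof is correct and follows essentially the same approach as the paper: both hinge on the observation that for packing sets the relevant linear functional can be taken non-negative by zeroing out negative coordinates, using downward-closedness in $\R^n_+$. The only cosmetic difference is that the paper packages the containment-versus-optimization equivalence via support functions and Rockafellar's Corollary~13.1.1, whereas you unfold it directly through the separating-hyperplane theorem; the substantive sign-flipping step is identical.
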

The usefulness of this characterization comes from the following: since set containment is preserved under intersections, if $U_i$ is an $\alpha$-approximation of $V_i$ for all $i \in I$ (an arbitrary set), then $\bigcap_{i \in I} U_i \subseteq \bigcap_{i \in I} \alpha V_i = \alpha \bigcap_{i \in I} V_i$ and thus $\bigcap_{i \in I} U_i$ is an $\alpha$-approximation of $\bigcap_{i \in I} V_i$. The equality in this argument follows from this simple observation (with $\phi(S) = \alpha S$).

\begin{observation}\label{obs:bijection}
Let $\phi:\mathbb{R}^n \rightarrow{R}^n$ be a bijective map, let $\{S^i\}_{i \in I}$ be a collection of subsets in $\mathbb{R}^n$ and let $\phi(S):= \{ \phi(x) \,|\, x \in S\}$. Then $\phi\left(\bigcap_{i \in I} S^i\right) = \bigcap_{i \in I} \phi(S^i)$.
\end{observation}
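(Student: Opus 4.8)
The plan is to verify the two set inclusions separately, the only nontrivial direction being an immediate consequence of injectivity. First I would record the inclusion $\phi\left(\bigcap_{i \in I} S^i\right) \subseteq \bigcap_{i \in I} \phi(S^i)$, which holds for \emph{any} map $\phi$ and \emph{any} collection $\{S^i\}_{i \in I}$: if $y \in \phi\left(\bigcap_{i \in I} S^i\right)$, then $y = \phi(x)$ for some $x$ lying in every $S^i$, so $y \in \phi(S^i)$ for all $i \in I$, and hence $y \in \bigcap_{i \in I} \phi(S^i)$.

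For the reverse inclusion I would invoke the hypothesis on $\phi$. Assume first $I \neq \emptyset$ and let $y \in \bigcap_{i \in I} \phi(S^i)$; for each $i \in I$ choose $x_i \in S^i$ with $\phi(x_i) = y$. Since $\phi$ is injective, all these preimages coincide, so there is a single point $x$ with $x \in S^i$ for every $i$, i.e. $x \in \bigcap_{i \in I} S^i$, whence $y = \phi(x) \in \phi\left(\bigcap_{i \in I} S^i\right)$. In the degenerate case $I = \emptyset$, both $\bigcap_{i \in I} S^i$ and $\bigcap_{i \in I} \phi(S^i)$ equal the ambient space $\mathbb{R}^n$, and $\phi(\mathbb{R}^n) = \mathbb{R}^n$ precisely because $\phi$ is surjective, so equality holds here as well. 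Combining the two inclusions gives the claim.

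I do not anticipate any genuine obstacle: this is an elementary set-theoretic identity. The only point worth flagging is that injectivity alone drives the main inclusion, while surjectivity is needed only to cover the empty index set; the full bijectivity hypothesis is harmless and is exactly what the intended application supplies, since the observation is used with $\phi(S) = \alpha S$ for a positive scalar $\alpha$, which is a bijection of $\mathbb{R}^n$.
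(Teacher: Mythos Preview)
Your proof is correct and complete. The paper itself does not give a proof of this observation at all --- it is stated as a self-evident set-theoretic fact and immediately applied with $\phi(S) = \alpha S$ --- so your argument simply spells out the elementary reasoning the paper omits.
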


We also note the following. 
\begin{proposition} \label{prop:inthullPack}
Let $Q$ be a packing set. Then, $Q^I$ is also a packing set.
\end{proposition}
The proof of Proposition \ref{prop:inthullPack} is given in Appendix \ref{app:packinginthull}.

%#############################################################
%#############################################################

\subsection{Proof of Theorem \ref{thm:pack}}

	\paragraph{Upper bound.} 
	We show the first part of the theorem. That is, consider a non-negative objective function $c \in \R^n_+$; we need to show that $z^{1\mathcal{M}} \le 2z^\mathcal{M}$
	for the closures $\mathcal{M} \in \{ \A, \C \}$.  
	
	Let $Q = \{x \in \R^n_+ \mid Ax \le b \}$ be a packing polyhedron.
	As mentioned in the introduction, a main handle to prove the result is to look at a \emph{pre-processed LP} of $Q$, which sets to 0 variables that have too large left-hand-side coefficients. More precisely, let $S$ be the set of indices $j$ where $A_{ij} > b_i$ for some $i$; the pre-processed LP is then $$\LPstar(Q) := \{ x \in \mathbb{R}^n_{+} \mid Ax \le b, ~x_j = 0\ \forall j \in S\}.$$
		
	As seen in Figure \ref{fig:packProofSch}, we prove $z^{1\mathcal{M}} \le 2z^\mathcal{M}$ by showing the following chain of inequalities: 
	$$z^{1\A} \leq z^{1\C} \leq z^{\LPstar} \leq 2 z^\A \leq 2 z^\C.$$
	The first inequality $z^{1A} \leq z^{1C}$ follows trivially by definition. The inequality $z^{A} \leq z^C$ is also obvious. It remains to show that $z^{1\C} \leq z^{\LPstar} \leq 2 z^\A$.
	
	We first show that the 1-row CG closure already captures the power of the pre-processed LP.

\begin{proposition}
\label{prop:pack_1CandLP}
$z^{1\C} \leq z^{\LPstar}$.
\end{proposition}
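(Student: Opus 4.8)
The plan is to show that every CG cut valid for an individual packing constraint $A^i x \le b_i$ (together with nonnegativity) is already valid for $\LPstar(Q)$; since $1\C(Q)$ is the intersection of these per-constraint CG closures and maximizing a nonnegative objective is monotone under such containments, this yields $z^{1\C} \le z^{\LPstar}$. Equivalently, I will argue $1\C(Q) \subseteq \LPstar(Q)$.

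First I would observe that $\LPstar(Q)$ is cut out from the LP relaxation only by the extra equalities $x_j = 0$ for $j \in S$, i.e. by the inequalities $x_j \le 0$. So it suffices to show that for each $j \in S$, the inequality $x_j \le 0$ is valid for $1\C(Q)$. Fix such a $j$, and let $i$ be an index with $A_{ij} > b_i$. Consider the single packing inequality $A^i x \le b_i$ over $x \in \R^n_+$. I claim $x_j \le 0$ is a CG cut (indeed a single rounding) for this set: since $x \ge 0$, every point with $A^i x \le b_i$ satisfies $A_{ij} x_j \le A^i x \le b_i$, hence $x_j \le b_i / A_{ij} < 1$. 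Because $A_{ij} > 0$ this is a valid scaling of the original row, and rounding the right-hand side of $x_j \le b_i/A_{ij}$ down gives the integer-valid inequality $x_j \le \lfloor b_i/A_{ij}\rfloor = 0$. (Here one uses $b_i \ge 0$ and $A_{ij} > b_i \ge 0$, so $0 \le b_i/A_{ij} < 1$.) Hence $x_j \le 0$ belongs to $\C(\{x \in \R^n_+ \mid A^i x \le b_i\})$, and therefore to $1\C(Q) = \bigcap_{i'} \C(\{x \in \R^n_+ \mid A^{i'} x \le b_{i'}\})$.

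Having shown $x_j \le 0$ is valid for $1\C(Q)$ for every $j \in S$, and noting $1\C(Q) \subseteq \C(\{x\in\R^n_+\mid A^{i'}x\le b_{i'}\}) \subseteq \{x \in \R^n_+ \mid A^{i'}x \le b_{i'}\}$ for each $i'$ so that all original packing inequalities and nonnegativity hold on $1\C(Q)$, I conclude $1\C(Q) \subseteq \{x \in \R^n_+ \mid Ax \le b,\ x_j = 0\ \forall j \in S\} = \LPstar(Q)$. Finally, for any $c \in \R^n_+$, $z^{1\C} = \max\{c^\top x \mid x \in 1\C(Q)\} \le \max\{c^\top x \mid x \in \LPstar(Q)\} = z^{\LPstar}$, as desired.

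I do not anticipate a serious obstacle here; the only point requiring a little care is the sign bookkeeping (that $A_{ij} > 0$ is needed to rescale the row legitimately, and that $0 \le b_i/A_{ij} < 1$ so the floor is exactly $0$), which follows from the nonnegativity of the packing data and the defining property $A_{ij} > b_i$ of $S$. A secondary subtlety is simply making explicit that $1\C$, being an intersection of per-row CG closures, inherits any inequality valid for even one of those closures — but that is immediate from the definition of intersection.
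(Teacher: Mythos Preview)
Your proposal is correct and follows essentially the same approach as the paper: for each $j\in S$ you pick a row $i$ with $A_{ij}>b_i$, scale that row by $1/A_{ij}$, and round to obtain $x_j\le 0$ as a CG cut of the single-row system, which yields $1\C(Q)\subseteq \LPstar(Q)$. The only cosmetic difference is that the paper rounds down all the left-hand-side coefficients to get $\sum_k \lfloor A_{ik}/A_{ij}\rfloor x_k \le \lfloor b_i/A_{ij}\rfloor = 0$ and then observes that over $\R^n_+$ this dominates $x_j\le 0$, whereas you first absorb the other terms using $x\ge 0$ to reach $x_j\le b_i/A_{ij}$ and then round; the two derivations are equivalent.
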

\begin{proof}
Suppose $A_{ij} > b_i$ for some $i \in [m]$, $j \in [n]$; it is sufficient to show that the inequality $x_j \le 0$ is valid for $1\C(Q)$. Consider the $i^{\text{th}}$ constraint and the following CG cut generated from it:
$$ \sum_{k = 1}^n  \left\lfloor\frac{A_{ik}}{A_{ij}}\right\rfloor x_j \leq \left\lfloor \frac{b_i}{A_{ij}}\right\rfloor = 0.$$
Observe that, over $\R^n_+$, this inequality dominates the inequality $x_j \leq 0$.
\end{proof}

We now show that the pre-processed LP gives a 2-approximation to the aggregation closure (and hence to the CG closure). 

\begin{proposition}
\label{prop:pack2}
$z^{\LPstar} \leq 2 z^{\A}$.
\end{proposition}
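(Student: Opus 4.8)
The plan is to exhibit, for each vertex $x^*$ of $\LPstar(Q)$, a single aggregated inequality whose integer hull (together with nonnegativity) excludes a scaled-down copy of $x^*$, so that $\A(Q) \subseteq \LPstar(Q)$ lies inside $2$ times the integer hull of one well-chosen aggregation. Concretely, it suffices to show $\LPstar(Q) \subseteq 2\,\A(Q)$; by Proposition~\ref{prop:approxPack} this is equivalent to the claimed inequality for every $c \in \R^n_+$. Since $\LPstar(Q)$ is a packing polyhedron, it is enough to handle a point $x^*$ that maximizes $c^\top x$ over $\LPstar(Q)$, i.e.\ an optimal vertex, and show $\tfrac12 x^* \in \A(Q)$, or rather that $c^\top(\tfrac12 x^*) \le z^{\A}$.

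The key step is the choice of aggregation multiplier. Given the optimal vertex $x^*$ of $\LPstar(Q)$, take $\lambda \in \R^m_+$ to be the optimal dual multipliers for the constraints $Ax \le b$ in the LP $\max\{c^\top x : x \in \LPstar(Q)\}$; equivalently, aggregate using the multipliers produced by LP duality, restricted to the rows actually binding at $x^*$. This yields a single inequality $\lambda^\top A\,x \le \lambda^\top b =: \beta$ valid for $Q$, and by construction $c_j \le (\lambda^\top A)_j$ for every coordinate $j$ with $x^*_j > 0$ (complementary slackness / optimality), while $x^*$ satisfies $\sum_j (\lambda^\top A)_j x^*_j = \beta$. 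Crucially, because $x^*$ lives in the pre-processed LP, every variable appearing (i.e.\ with $j \notin S$) has $A_{ij} \le b_i$ for all $i$, hence the aggregated coefficient $a_j := (\lambda^\top A)_j$ satisfies $a_j \le \lambda^\top b = \beta$ for those $j$. Now I invoke the standard fact about a single packing (knapsack) inequality $\sum_j a_j x_j \le \beta$ with $0 \le a_j \le \beta$: its integer hull in $\Z^n_+$ is a $2$-approximation of its LP relaxation $\{x \in \R^n_+ : \sum_j a_j x_j \le \beta\}$. (This is the classical ``a single knapsack constraint has integrality gap at most $2$'' statement — round down a fractional optimum, and the discarded mass contributes at most one item of value $\le \beta$-worth, controlled because each $a_j \le \beta$; the factor $2$ is exactly what makes the tight example in Theorem~\ref{thm:pack} work.) Therefore $\max\{c^\top x : x \in \Z^n_+,\ a^\top x \le \beta\} \ge \tfrac12 \max\{c^\top x : x \in \R^n_+,\ a^\top x \le \beta\} \ge \tfrac12\, c^\top x^* = \tfrac12 z^{\LPstar}$, where the second inequality uses $c^\top x^* = \sum_{j: x^*_j>0} c_j x^*_j \le \sum_j a_j x^*_j = \beta$ together with feasibility of $x^*$ for the relaxed single-row polytope. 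Since $\conv(\{x \in \Z^n_+ : a^\top x \le \beta\})$ is one of the sets intersected in the definition of $\A(Q)$, we get $z^{\A} \ge \tfrac12 z^{\LPstar}$, as desired.

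The main obstacle I anticipate is making the single-row integrality-gap bound precise with the right constant and verifying that the pre-processing hypothesis $a_j \le \beta$ is exactly what is needed: without it the gap for one knapsack row can be unbounded (take $a_1 = N$, $\beta = N$, $c_1 = 1$ but another variable with tiny coefficient), so the reduction to $\LPstar$ rather than the raw LP is essential, and I should be careful that the argument only uses coordinates $j$ with $x^*_j > 0$, all of which automatically lie outside $S$. A secondary point to get right is the duality step — ensuring the aggregated inequality genuinely dominates $c^\top x$ on the relevant coordinates and that $\beta = \lambda^\top b$ is the correct right-hand side — but this is routine LP duality once the pre-processed LP is set up. I would also double-check the edge case where some $a_j = 0$ (then $x_j$ is unbounded in the single-row relaxation, but then $c_j = 0$ as well by optimality of $x^*$ in $\LPstar(Q)$, so it contributes nothing), and the trivial case $z^{\LPstar} = 0$.
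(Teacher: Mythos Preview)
Your proposal contains a genuine logical error in the direction of the final implication. You correctly establish that, for your chosen multipliers $\lambda$ (coming from LP duality), the integer optimum of the single aggregated knapsack satisfies
\[
\max\{c^\top x : x \in \Z^n_+,\ a^\top x \le \beta\} \;\ge\; \tfrac12\, z^{\LPstar}.
\]
But then you write ``Since $\conv(\{x \in \Z^n_+ : a^\top x \le \beta\})$ is one of the sets intersected in the definition of $\A(Q)$, we get $z^{\A} \ge \tfrac12 z^{\LPstar}$.'' This is backwards. Because $\A(Q)$ is the \emph{intersection} $\bigcap_{\mu}(Q_\mu)^I$, it is \emph{contained in} each individual $(Q_\mu)^I$; hence $z^{\A} \le \max\{c^\top x : x \in (Q_\lambda)^I\}$, not $\ge$. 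Having a large optimum over one of the intersected sets tells you nothing about the optimum over the intersection. Concretely, the integer point witnessing your lower bound lies in $(Q_\lambda)^I$, but there is no reason it satisfies the other aggregated constraints, so it need not lie in $\A(Q)$.

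The paper's argument avoids this by proving the containment $\LPstar(Q) \subseteq 2(Q_\lambda)^I$ for \emph{every} $\lambda \in \R^m_+$, and then intersecting over all $\lambda$. No objective-specific choice of multiplier is needed (so LP duality plays no role). The key observation is that for an arbitrary $\lambda$, the set $S$ of killed variables in $\LPstar(Q)$ always contains $\{j : (\lambda^\top A)_j > \lambda^\top b\}$; this is exactly what makes the single-knapsack factor-$2$ rounding go through uniformly in $\lambda$. Your pre-processing insight (that $j\notin S$ forces $a_j \le \beta$) is correct and is precisely the ingredient the paper uses---you just need to apply it for every $\lambda$ rather than for one.
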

\begin{proof}
We begin with a preliminary result. 
%\red{Maybe pull this Claim out of the proof?}

\paragraph{Claim 1} Consider a single-constraint packing polyhedron $P^1 := \{x \in \mathbb{R}_+^n \,|\, a^{\top}x \le b_0\}$ and the related polyhedron $P^2 := \{x \in \mathbb{R}_+^n \,|\, a^{\top}x \le b_0, \ x_j = 0 \ \forall j \in S\}$ where $S \supseteq \{j \,|\, a_j > b_0\}$. Then ${P^2} \subseteq 2 (P^1)^{I}$.%, where $(P^1)^{I}$ is the integer hull of $P^1$: 
\\ \\ \emph{Proof.} %\begin{proof}
If $S = [n]$, then the result is trivially true. Otherwise, consider a cost function $c \in \mathbb{R}^n_+$ and let $x^*$ be a maximizer of $c$ over ${P}^2$. Notice $x^*$ simply sets the coordinate not in $S$ with largest ratio $c_j/a_j$ to value $b_0/a_j$. So rounding down $x^*$ gives a point in $(P^1)^{I}$ with $c$-value at least half that of $x^*$. This implies that $P^2$ is a 2-approximation for $(P^1)^I$, and so Proposition \ref{prop:approxPack} gives the desired inclusion $P^2 \subseteq 2 (P^1)^I$, which follows from the fact that $(P^1)^I$ is a packing polyhedron (by Proposition \ref{prop:inthullPack}). $\diamond$
\\ \\ %\end{proof}
\indent
Let $S := \{j\,|\, A_{ij} > b_i \textup{ for some } i\in [m], j \in [n]\}$. For $\lambda \in \R_+^m$, let $Q_{\lambda} = \{ x \in \R^n_+ \mid \lambda^\top Ax \leq \lambda^\top b \}$ and  
$(\LPstar(Q))_{\lambda} = \{x \in \mathbb{R}_+^n \,|\, \lambda^{\top} {A} x \le \lambda^{\top} b, \ x_j = 0, \forall j \in S\}$. Using Claim 1 we have that for any $\lambda \in \mathbb{R}^m_{+}$ 
\begin{eqnarray}\label{eq:mainpack}
\LPstar(Q) \subseteq (\LPstar(Q))_{\lambda} \subseteq 2\left({Q}_{\lambda}\right)^I.
\end{eqnarray}
%where the first containment is trivial and the second containment follows from Claim 1. 
Taking intersection over all $\lambda \in \R^n_+$ we obtain that 
\begin{align*}
	\LPstar(Q) \subseteq \bigcap_{\lambda \in \R^m_+}2\left({Q}_{\lambda}\right)^I  = 2\bigcap_{\lambda \in \R^m_+}\left({Q}_{\lambda}\right)^I = 2 \A(Q), 
\end{align*}
where the first equation uses Observation \ref{obs:bijection}. Then from Proposition \ref{prop:approxPack} we have that $\LPstar(Q)$ is a 2-approximation for $\A(Q)$.
\end{proof}

%###################################

\paragraph{Tight instances.} To prove the second part of the theorem, it suffices to show that there are instances where the 1-row closure, which is the strongest 1-row closure we consider, is at most roughly a 2-approximation of the CG closure, the weakest closure we consider.

\begin{proposition}
\label{prop:pack_TightEx}
For every $\epsilon>0$ there exists an instance where $\frac{z^{1\A}}{z^{\C}} \geq 2 - \epsilon$.
\end{proposition}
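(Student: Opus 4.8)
The plan is to exhibit an explicit two-variable, two-constraint family $Q_N$ consisting of a symmetric pair of ``almost-conflicting'' knapsack constraints, for which a single Chv\'atal--Gomory rounding applied to the all-ones aggregation already collapses the LP relaxation down to the integer hull, while the two individual constraints are each so weak that their (aggregation) closures together still contain the fractional LP optimum. Concretely, for an integer $N \ge 2$ I would take
\[
Q_N := \{ x \in \R^2_+ \mid N x_1 + x_2 \le N, \ x_1 + N x_2 \le N \},
\]
with objective $c = (1,1)^\top$, and prove $z^{\C} = 1$ while $z^{1\A} \ge \tfrac{2N}{N+1}$. Since $\tfrac{2N}{N+1} = 2 - \tfrac{2}{N+1} \to 2$, for any given $\e > 0$ it then suffices to choose $N \ge \max\{2, \lceil 2/\e \rceil\}$ to get $\tfrac{z^{1\A}}{z^{\C}} \ge 2 - \e$.

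For the upper bound on $z^{\C}$: I would first check that the only integer points of $Q_N$ are $(0,0), (1,0), (0,1)$ --- if $x_1, x_2 \ge 1$ the first constraint is violated, and if one coordinate is $0$ the \emph{other} constraint forces the remaining coordinate to be at most $1$ --- so $z^I = 1$. Then I would apply the CG multiplier $\mu = \tfrac{1}{N+1}(1,1)^\top$, which yields $\mu^\top A = (1,1)$ and $\mu^\top b = \tfrac{2N}{N+1}$, hence the CG cut $x_1 + x_2 \le \lfloor \tfrac{2N}{N+1}\rfloor = 1$, valid for $\C(Q_N)$. Combined with $z^{\C} \ge z^I = 1$ this gives $z^{\C} = 1$.

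For the lower bound on $z^{1\A}$: I would use that for a single packing constraint the aggregation closure equals its integer hull (scaling the unique nonnegative multiplier does not change the integer feasible set, and the $\lambda = 0$ term contributes all of $\R^n_+$). The integer points of $\{x \in \R^2_+ \mid N x_1 + x_2 \le N\}$ are $(1,0)$ together with $(0,k)$ for $0 \le k \le N$, whose convex hull is the triangle $T_1 := \conv\{(0,0),(0,N),(1,0)\}$; symmetrically the second constraint gives $T_2 := \conv\{(0,0),(N,0),(0,1)\}$, so $1\A(Q_N) = T_1 \cap T_2$. The point $p := \big(\tfrac{N}{N+1}, \tfrac{N}{N+1}\big)$ --- precisely the intersection of the two original constraint hyperplanes, and hence the LP optimum --- lies in $T_1$ because $p = \tfrac{N}{N+1}(1,0) + \tfrac{1}{N+1}(0,N)$ is a convex combination of two vertices of $T_1$, and symmetrically $p \in T_2$. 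Thus $p \in 1\A(Q_N)$, giving $z^{1\A} \ge c^\top p = \tfrac{2N}{N+1}$.

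I do not anticipate a real obstacle; the one point needing care is the explicit description of the integer hull of each single constraint and the verification that the diagonal point $p$ lies on an edge of each triangle $T_i$ rather than strictly outside it. This is exactly why the construction uses integer coefficients (with the ``small slope'' equal to $1/N$ for an integer $N$) rather than an arbitrary small parameter: it ensures the vertex $(0,N)$ of $T_1$ is itself an integer point, so that $p$ is a convex combination of vertices of $T_1$ in which the weight on $(0,0)$ is exactly zero, which is what makes $p$ feasible for the single-constraint closure.
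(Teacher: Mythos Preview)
Your proposal is correct and follows essentially the same approach as the paper: the same two-variable family $Q_N$ (the paper uses $M$ in place of $N$), the same CG cut $x_1 + x_2 \le 1$ obtained from the all-ones aggregation, and the same observation that each single-constraint polyhedron is integral so that $1\A(Q_N)$ coincides with the LP relaxation. The paper simply asserts integrality of each one-row set and concludes $z^{1\A} = z^{LP} = \tfrac{2N}{N+1}$, whereas you verify this more explicitly by computing the integer hulls $T_1, T_2$ and checking that the LP optimum $p$ lies in both; this is a harmless elaboration of the same argument.
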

\begin{proof}
Consider the following family of packing IPs
\begin{align}
\text{maximize} \quad & x_1 +  x_2 \nonumber \\
\text{subject to} \quad & x_1 + Mx_2 \leq M \label{packingtightexA} \\
& Mx_1 + x_2 \leq M \label{packingtightexB} \\
& x \ge 0 \label{c} \\
& x \in \mathbb{Z}^2, \nonumber
\end{align}
where $M$ is an integer with $M \ge 1$.

We show that $\lim_{M \rightarrow \infty} \frac{z^{1\A}}{z^{\C}} \rightarrow 2$.
Observe that the set $\{x \in \mathbb{R}^2_{+}\,|\,  x_1 + Mx_2 \leq M\}$ and the set $\{x \in \mathbb{R}^2_{+}\,|\,  Mx_1 + x_2 \leq M\}$ are integral. Therefore, $z^{1\A} = z^{LP}$, 
or equivalently $z^{1\A} = \frac{2M}{M + 1}$. 

On the other hand, since \eqref{packingtightexA} and \eqref{packingtightexB} imply that the inequality $x_1 + x_2 \leq \frac{2M}{M + 1}$ is valid for $Q$, we have that $x_1 + x_2 \leq 1$ is a valid CG cut for $Q$. Therefore, we obtain $\C(Q) \subseteq \{(x_1, x_2) \in \mathbb{R}^2_{+}\,|\, x_1 + x_2 \leq 1 \}$. Thus $z^{\C} = 1$.
\end{proof}
%##################################################
%##################################################

\subsection{Proof of Theorem \ref{thm:noncoverpack}}

Let $k \in \mathbb{Z}$ and $k \geq 2$ and consider the following IP: 
\begin{eqnarray}
&\textup{max}& x_1 + x_2 \nonumber \\
&\textup{s.t.}& k^2 x_1 - (k-1) x_2 \leq k^2 \label{con1}\\
&& -kx_1 + x_2 \leq -k + 1 \label{con2}\\
&& x_1, x_2\geq 0. \label{con3}
\end{eqnarray}
	To prove the theorem, it suffices to show that as $k$ goes to infinity, the ratios $\frac{z^{1\C}}{z^{\C}}$ and $\frac{z^{1\A}}{z^{\A}}$ also go to infinity. In fact, since $z^{1\A} \le z^{1\C}$ and $z^{\A} \le z^{\C}$, we just need to show that  $\frac{z^{1\A}}{z^{\C}} \rightarrow \infty$.  
	
\paragraph{1-row closure.} We verify that the point $(2 - \frac{1}{k}, k)$ belongs to the original 1-row cut closure. Consider the following cases:
\begin{enumerate}
\item Integer hull of (\ref{con1}) and (\ref{con3}): The points $(1,0)$ and $(k, k^2)$ are valid integer points and $(2 - \frac{1}{k}, k)$ is a convex combination of these points. 
\item Integer hull of (\ref{con2}) and (\ref{con3}): The points $(1,1)$ and $(2, k + 1)$ are valid integer points and $(2 - \frac{1}{k}, k)$ is a convex combination of these points.  
\end{enumerate}

Since $(2-\frac{1}{k},k)$ belongs to the 1-row closure, by inspecting its objective value we obtain that $z^{1\A} \geq 2 + k - \frac{1}{k}$. 
\paragraph{CG closure.} To upper bound the optimal value of the CG closure, we explicitly construct one CG cut. Consider the aggregation of the LP inequalities 
$\frac{1}{k}\times$(\ref{con1}) $+$ $\frac{k-1}{k}\times(\ref{con2}) \equiv x_1 \leq 2-\frac{1}{k}$, which gives the CG cut $x_1 \leq 1.$ 

We can compute an upper bound on $z^{\C}$ by computing the optimal value subject to this CG cut and \eqref{con2}, namely $\textup{max}\{ x_1 + x_2 \,|\, x_1 \leq 1, \ -k x_1 + x_2 \leq -k + 1, x_1 \geq 0, x_2 \geq 0\} = 2$: $(1,1)$ is a feasible primal solution and $(k+1, 1)$ is a dual feasible solution with same objective function value. Therefore, we have $z^{\C} \leq 2$.

\medskip

Putting these bounds together obtain that $\frac{z^{1\A}}{z^{\C}} = \frac{k}{2} + 1 - \frac{1}{2k}$, which goes to infinity as $k \rightarrow \infty$. This concludes the proof of the theorem. \hfill \qed
%%%%%%%%%%%%%%%%
\subsection{Proof of Theorem~\ref{thm:packrankgen}}

A key result we use is given in Proposition \ref{prop:packingDf} below, which provides a bound on the integrality gap as a function of the number of inequalities. Note that since we do not make any assumptions on the coefficients of the constraint matrix of the packing polyhedron, we obtain better coefficients than those obtainable by using randomized rounding arguments; see for example \cite{srinivasan1999improved}. 

\begin{proposition}
\label{prop:packingDf}
Consider a packing IP of the following form $\max \{c^{\top}x \mid Dx \leq f, \ x \in \mathbb{Z}_{+}^n\}$ where $D$ is a $k \times n$ non-negative matrix such that $D_{ij} \leq f_i$ for all $i \in [k]$ and $j \in [n]$ and $c \in \mathbb{R}^n_{+}$. Then $z^{LP} \leq (k+1) z^I$. 
%\red{M: We could move the proof to the appendix, and say that a result with worse constants follows from Bansal et al..}
\end{proposition}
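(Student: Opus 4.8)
The plan is to exhibit a feasible integer point whose objective value is at least $\frac{1}{k+1}z^{LP}$. Let $x^*$ be an optimal vertex of the LP relaxation $\{x \in \R^n_+ \mid Dx \le f\}$, so $c^\top x^* = z^{LP}$. Since $x^*$ is a vertex of a polyhedron in $\R^n$ defined by $n$ nonnegativity constraints and $k$ packing constraints, at least $n-k$ of the nonnegativity constraints are tight at $x^*$; hence $x^*$ has at most $k$ nonzero coordinates. Let $T = \{j : x^*_j > 0\}$, so $|T| \le k$.

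Now I would split the support into two cases depending on how much mass sits on small versus large coordinates, but the cleanest route is: first consider the point $\lfloor x^* \rfloor$ obtained by rounding every coordinate down. This is feasible for the LP (rounding down preserves $Dx \le f$ since $D \ge 0$) and obviously integral and nonnegative, so it lies in the integer hull. The loss incurred is $c^\top(x^* - \lfloor x^* \rfloor) = \sum_{j \in T} c_j\{x^*_j\} < \sum_{j \in T} c_j$, where $\{\cdot\}$ denotes the fractional part. Second, for each $j \in T$ individually, the point $\lfloor x^*_j \rfloor e_j + e_j = \lceil x^*_j \rceil e_j$ — or more carefully, the point with $\lceil x^*_j \rceil$ in coordinate $j$ and $0$ elsewhere — is feasible: coordinate $j$ satisfies $D_{ij}\lceil x^*_j\rceil \le D_{ij}(x^*_j + 1) \le D_{ij}x^*_j + f_i \le f_i$ for every row $i$, using $D_{ij} \le f_i$ and $D_{i\cdot}x^* \le f_i$ with $x^* \ge 0$. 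This integer point has $c$-value $c_j\lceil x^*_j\rceil \ge c_j x^*_j$. So we have produced $|T|+1 \le k+1$ integer feasible points: the one point $\lfloor x^*\rfloor$ with value $\ge z^{LP} - \sum_{j\in T} c_j x^*_j \cdot(\text{something})$ and, for each $j \in T$, a point with value $\ge c_j x^*_j$. Taking the best of these $k+1$ points, its value is at least the average, and since $c^\top\lfloor x^*\rfloor + \sum_{j \in T} c_j\lceil x^*_j\rceil \ge c^\top\lfloor x^*\rfloor + \sum_{j\in T} c_j x^*_j = c^\top\lfloor x^*\rfloor + c^\top x^* - c^\top\lfloor x^*\rfloor$... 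I need to be more careful: the sum of the values of the $k+1$ points is at least $\sum_{j \in T}\lfloor x^*_j\rfloor c_j + \sum_{j \in T}\lceil x^*_j \rceil c_j \ge \sum_{j\in T}(2\lfloor x^*_j\rfloor + [x^*_j \notin \Z])c_j$; a slicker bound is $\sum_{j\in T}\lceil x^*_j\rceil c_j \ge \sum_{j\in T} x^*_j c_j = z^{LP}$ already from the $|T|$ single-coordinate points alone, but each of those is only one point, not a guarantee about the max — wait, it is: the max over the $|T|$ points is $\ge \frac{1}{|T|}\sum_{j\in T}\lceil x^*_j\rceil c_j \ge \frac{1}{|T|} z^{LP} \ge \frac{1}{k} z^{LP}$, which is even stronger. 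So in fact $z^I \ge \frac{1}{|T|}z^{LP} \ge \frac{1}{k}z^{LP}$ whenever $T \ne \emptyset$, and if $T = \emptyset$ then $z^{LP} = 0$. Either way $z^{LP} \le k\, z^I \le (k+1)z^I$.

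The only genuine subtlety — the step I expect to need the most care — is the vertex/support argument establishing $|T| \le k$: one must be sure the relevant optimum is attained at a vertex (it is, since $c \ge 0$ and the feasible region, being a packing polyhedron, is bounded in the directions where $c$ is supported, or one passes to the face of optimal solutions and picks a vertex there), and that a vertex of a system with $n$ sign constraints and $k$ inequalities indeed has at most $k$ nonzeros. Everything else — feasibility of roundings, the averaging — is routine. I would write it up using the single-coordinate points only, giving the clean bound $z^{LP} \le k \cdot z^I \le (k+1)z^I$, which matches (and slightly beats) the stated claim.
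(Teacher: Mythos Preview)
Your argument contains a genuine error in the feasibility claim for the single-coordinate points. You assert that $\lceil x^*_j\rceil e_j$ is feasible via the chain
\[
D_{ij}\lceil x^*_j\rceil \le D_{ij}(x^*_j+1) \le D_{ij}x^*_j + f_i \le f_i,
\]
but the last inequality is not justified: from $D^i x^* \le f_i$ and $x^*\ge 0$ you only get $D_{ij}x^*_j \le f_i$, hence $D_{ij}x^*_j + f_i \le 2f_i$, not $\le f_i$. Concretely, take $k=n=1$, $D=[1]$, $f=[1.9]$, $c=1$. Then $x^*=1.9$, $\lceil x^*\rceil=2$, and $D\cdot 2 = 2 > 1.9$, so your point is infeasible. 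In this same instance $z^{LP}=1.9$ and $z^I=1$, so your claimed stronger bound $z^{LP}\le k\,z^I$ is in fact false; the factor $k+1$ in the proposition is needed.

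The fix is essentially the route you sketched first and then abandoned, which is also what the paper does: use the floor $\hat{x}=\lfloor x^*\rfloor$ (feasible, value $c^\top\hat x$) together with the unit vectors $e_j$ (feasible by $D_{ij}\le f_i$, value $c_j$). Then $z^I\ge\max\{c^\top\hat x,\ \max_j c_j\}$, while $z^{LP}=c^\top\hat x + c^\top(x^*-\hat x)$ with the fractional part supported on at most $k$ coordinates and each entry $<1$, so $c^\top(x^*-\hat x)< k\max_j c_j$. Dividing gives $z^{LP}/z^I \le 1 + k$. Your vertex/support argument for $|T|\le k$ is fine; only the rounding-up step needs to be replaced.
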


\begin{proof}
If the LP has unbounded value, then the IP also has unbounded value \cite{NemWolBook}, and there is nothing to prove. 

Assume the LP has bounded value, and let $x^{LP}$ be an optimal solution of the LP. Let 
$$x^{LP} = \hat{x} + x^F,$$
where $\hat{x}$ is obtained by rounding down $x^{LP}$ componentwise. Then, $\hat{x}$ belongs to the feasible region of the packing problem, and hence $z^I \geq c^\top \hat{x}$.

Let $c_{max} \in \text{argmax}_{j} \{ c_j \}$. Since $e_j$ for all $j \in [n]$ belongs to the feasible region, $z^I \geq c_{max}$. Thus, $z^I \geq \max \{ c^\top \hat{x},c_{max}\}$.

Now, observe that 
\begin{align*}
\frac{z^{LP}}{z^I} & \leq \frac{c^\top \hat{x}+c^\top x^F}{\max \{ c^\top \hat{x},c_{max}\}} \\
& \leq 1 + \frac{c^\top x^F}{\max \{ c^\top \hat{x},c_{max}\}}.
\end{align*}

Since there are $k$ constraints $D^i x \le f_i$, at most $k$ components of $x^{LP}$ can be non-zero. In other words, at most $k$ components of $x^F$ can be non-zero. Also, each entry of $x^F$ is strictly less than 1. Hence, $c^\top x^F \leq c_{max} k$, and therefore 
$$\frac{z^{LP}}{z^I} \leq 1+ \frac{c_{max} k}{\max \{ c^\top \hat{x},c_{max} \} } \leq 1+ k.$$
\end{proof}
\paragraph{Lower bound on rank.} We actually prove Theorem \ref{thm:packrankgen} for the more general case of packing sets containing all the basis vectors $e_j$'s; notice that for a packing polyhedron $Q = \{x \in \R^n_+ \mid Ax \le b\}$, containing all basis vectors $e_j$'s is equivalent to the condition $A_{ij} \le b_i$ for all $i,j$.

So let $Q$ be a non-empty packing set containing all the basis vectors $e_j$'s. Given a matrix $(D,f) \in \R^{k \times n} \times \R^k$, we say that it is a \emph{\kvi} for $Q$ if $(D,f)$ is non-negative and the inequalities $D^i x \le f_i$ are valid for $Q$. We denote the polyhedral outer-approximation $\{ x \in \R^n_+ \mid D x \leq f \}$ of $Q$ by $P_{(D,f)}$. Then by definition 
\begin{align}
	\A_k(Q) = \bigcap_{(D,f)\textrm{ is a \kvi \ for $Q$}} (P_{(D,f)})^I. \label{eq:kviIntersection}
\end{align}

Let $Q^\ell$ be the $\ell^{\text{th}}$ $k$-aggregation closure of $Q$. 
\paragraph{Claim 1} 
$Q^\ell \subseteq (k+1)\, Q^{\ell+1}$. 
\\ \\ \emph{Proof.} %\begin{proof}
%	Since $\A_k(Q) \supseteq Q^I$, it is clear that $\A_k(Q)$ is a packing set containing all basis vectors $e_j$'s; employing this argument repeatedly, we obtain that $Q^{\ell}$ also satisfies both properties.
Consider a \kvi \ $(D,f)$ for $Q^\ell$. Clearly $P_{(D,f)}$ is a packing polyhedron, and since $P_{(D,f)} \supseteq \A_k(Q^\ell) \supseteq Q^I$, all basis vectors $e_j$ belong to $P_{(D,f)}$. Therefore, $D_{ij} \leq f_i$ for all $i \in [k], ~ j \in [n]$, and so by Proposition \ref{prop:packingDf} we obtain that $P_{(D,f)}$ is a $(k+1)$-approximation of $(P_{(D,f)})^I$. Hence, Proposition \ref{prop:approxPack} gives 
$$Q^\ell \subseteq P_{(D, f)} \subseteq (k+1) (P_{(D, f)})^I.$$
So taking intersection over all \kvi's and using Observation~\ref{obs:bijection}, we have that
\begin{align*}
\begin{split}
Q^\ell \ \subseteq & \  \bigcap_{(D,f) \textup{ is \kvi \ for } Q^\ell} (k+1) (P_{(D, f)})^I  \\
= & \ (k+1) \bigcap_{(D,f) \textup{ is \kvi \ for } Q^\ell} (P_{(D, f)})^I \\
= & (k+1)\, Q^{\ell+1},
\end{split}
\end{align*}
where the last equality follows from \eqref{eq:kviIntersection}. This concludes the proof.
~$\diamond$
\\  \\ %\end{proof}
\indent
Finally, suppose the rank of $k$-aggregation is $t$ and let $z^{i}$ be the optimal objective function value over the $i^{\text{th}}$ closure. Since all of these closures are packing sets, Claim 1 and Proposition \ref{prop:approxPack} guarantee that $z^i \le (k+1) z^{i+1}$. Therefore,
%containing $e_j$ for $j \in [n]$. Therefore, 
\begin{eqnarray*}
\frac{z^{LP}}{z^I} = \frac{z^{LP}}{z^1} \frac{z^1}{z^2}\dots\frac{z^{t-1}}{z^{t}} \leq (k+1)^{t}.
\end{eqnarray*}
%where the last inequality follows from the previous claim. 
This implies the inequality
\begin{eqnarray*}
t = \rank_{\A_k}(Q) \geq \left\lceil\frac{\textup{log}_2\left( \frac{z^{LP}}{z^I}\right)}{\textup{log}_2(k+1)}\right\rceil,
\end{eqnarray*}
which is the required result.

\paragraph{Tight example.} We now show that 
there is a packing integer set $Q$ with $\rank_{A_1}(Q) \le O\left(\textup{log}_2\left( \frac{z^{LP}}{z^{I}}\right)\right)$.
Let $K_n$ be a complete graph with node set $[n]$, and let $Q$ be the standard edge-relaxation of the stable set polytope:
\begin{align*}
FSTAB(K_n) = \{ x \in \R_+^{n} \,|\, x_i + x_j \le 1 \ \forall i,j \in [n], \ i < j\}.
\end{align*}
If our objective is to maximize $\sum_{v \in [n]} x_v$, then we obtain $z^{I}=1$ and $z^{LP}=n/2$ because the optimal vertex of $FSTAB(K_n)$ is the vector with all entries equal to $1/2$.
Consider now the clique inequality $\sum_{v \in [n]} x_v \le 1$, which defines a facet of the stable set polytope.
We only need to show that the CG rank of the clique inequality is upper bounded by $O\left(\textup{log}_2\left( \frac{z^{LP}}{z^{I}}\right) \right)= \lceil \log_2 (n-1) \rceil$.
%O(\textup{log}_2\left( \frac{n}{2}\right))$.
The latter is a well-known fact \cite{hartmann}.
\qed

%#########################################################
%#########################################################
%#########################################################
%#########################################################

\section{Proofs for covering problems}
\label{sec:Covering}

	We now provide additional definitions and proofs of the statements presented in the introduction regarding covering problems: in Subsection \ref{sec:proofCover} we prove Theorem \ref{thm:cover}, the main result of this section, and in Subsection \ref{subsec:CoveringRankProof} we prove Theorem \ref{thm:rankCover}. Before proving these results we need to develop some general results concerning covering sets with bounds. 

%#########################################################
%#########################################################
	
	\subsection{Properties of covering sets with bounds}
	\label{subsec:CoveringProperties}

	We start by showing that adding non-negative directions to a covering polyhedron with bounds still leaves it as a covering polyhedron (possibly with bounds); in fact, adding all the non-negative directions is a natural way of removing the upper bounds. 
	
	Given a covering polyhedron with bounds of the form $P = \{ x \in \R^n_+ \mid Ax \geq b,~ x \leq u \}$ with $A,b \ge 0$, we refer to $Ax \geq b$ as the covering inequalities of $P$.

\begin{proposition} \label{prop:coverUpward}
	Consider a covering polyhedron with bounds $P = \{ x \in \R^n_+ \mid Ax \ge b, \ x \le u\}$. Then, for any subset $\{e_j\}_{j \in J}$ of the canonical vectors we have that $P + \cone(\{e_j\}_{j \in J})$ is a covering polyhedron with bounds. In particular, $P + \R^n_+$ is a covering polyhedron.
	
	Moreover, each covering inequality of $P + \cone(\{e_j\}_{j \in J})$ is a conic combination of one covering inequality of $P$ with the bounds $x_j \le u_j$ for $j \in J$.
	\end{proposition}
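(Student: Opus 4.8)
The plan is to reduce the statement to the case $J = \{j\}$ of a single canonical direction, and then iterate. So first I would observe that if we can show $P + \cone(\{e_j\})$ is a covering polyhedron with bounds whose covering inequalities are conic combinations of one covering inequality of $P$ with the single bound $x_j \le u_j$, then for a general index set $J = \{j_1, \dots, j_r\}$ we apply this $r$ times: $P_0 := P$, $P_{t} := P_{t-1} + \cone(\{e_{j_t}\})$, so that $P_r = P + \cone(\{e_j\}_{j \in J})$. At each step the covering inequalities of $P_t$ are conic combinations of a covering inequality of $P_{t-1}$ with $x_{j_t} \le u_{j_t}$; unwinding the recursion, each covering inequality of $P_r$ is a conic combination of one covering inequality of $P$ together with the bounds $x_{j_1} \le u_{j_1}, \dots, x_{j_r} \le u_{j_r}$. (One has to be a little careful that the bound $x_{j_t} \le u_{j_t}$ is still present in $P_t$ — it is, since adding $\cone(\{e_{j_t}\})$ only affects whether $x_{j_t} \le u_{j_t}$ remains valid, and $P_t + \cone(\{e_{j_t}\})$ does keep all the bounds $x_i \le u_i$ for $i \ne j_t$, while $x_{j_t}$ simply becomes unbounded, i.e. $u_{j_t}$ becomes $+\infty$, which is allowed by the definition of covering polyhedron with bounds.) Also $P + \R^n_+ = P + \cone(\{e_1,\dots,e_n\})$ loses all bounds, giving a covering polyhedron.

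For the single-direction step, fix $j$ and write $P = \{x \in \R^n_+ \mid A^i x \ge b_i\ \forall i \in [m],\ x_\ell \le u_\ell\ \forall \ell\}$. I would describe $P + \cone(\{e_j\})$ via Fourier–Motzkin elimination of the added parameter: $x \in P + \cone(\{e_j\})$ iff there is $t \ge 0$ with $x - t e_j \in P$. Since $P \subseteq \R^n_+$ and $A, b \ge 0$, only the constraints involving $x_j$ with a sign that interacts with $t$ survive non-trivially. The covering inequality $A^i x \ge b_i$ becomes $A^i x \ge b_i + A_{ij} t \ge b_i$ (using $A_{ij} \ge 0$, $t \ge 0$), so it stays valid as is; the nonnegativity $x_j \ge 0$ becomes $x_j \ge t \ge 0$, still fine; the other nonnegativities and the bounds $x_\ell \le u_\ell$ for $\ell \ne j$ are untouched; and the bound $x_j \le u_j$ becomes $x_j \le u_j + t$, which for the projection onto $x$ (eliminating $t \ge 0$) is implied and hence dropped. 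So $P + \cone(\{e_j\})$ equals $\{x \in \R^n_+ \mid A^i x \ge b_i\ \forall i,\ x_\ell \le u_\ell\ \forall \ell \ne j\}$, which is patently a covering polyhedron with bounds (the bound on $x_j$ is $+\infty$), and its covering inequalities are exactly those of $P$ — in particular each is the trivial conic combination (coefficient $1$) of one covering inequality of $P$ with coefficient $0$ on $x_j \le u_j$. This is a valid "conic combination of one covering inequality of $P$ with the bound $x_j \le u_j$", so the claim's description holds.

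I expect the main subtlety — not really an obstacle — to be bookkeeping about what "conic combination with the bounds" means once we iterate: in the multi-step argument a covering inequality of $P_t$ is written using a covering inequality of $P_{t-1}$, which is itself already a combination involving earlier bounds, so I must check that nonnegative combinations compose to a nonnegative combination of the original data, and that signs never force a bound $x_j \le u_j$ to be used with a negative multiplier (they don't, precisely because $A \ge 0$ makes the covering inequalities only get weaker when we add $e_j$, so no bound is ever needed to restore validity — the multiplier on each bound can be taken to be $0$). A secondary point worth stating cleanly is that the definition of covering polyhedron with bounds permits $u_j = +\infty$, which is what makes "dropping" the bound on $x_j$ legitimate within the stated form; with that convention in hand the proof is essentially the Fourier–Motzkin computation above.
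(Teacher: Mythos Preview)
Your Fourier--Motzkin step is incomplete, and as a result your description of $P + \cone(e_j)$ is wrong. When you eliminate $t$ from the system ``$x - t e_j \in P$, $t \ge 0$'', the constraint $x_j - t \le u_j$ gives a \emph{lower} bound $t \ge x_j - u_j$, and each covering constraint $A^i x - A_{ij} t \ge b_i$ with $A_{ij} > 0$ gives an \emph{upper} bound $t \le (A^i x - b_i)/A_{ij}$. Pairing these yields
\[
A_{ij}(x_j - u_j) \;\le\; A^i x - b_i,
\qquad\text{i.e.}\qquad
\sum_{\ell \ne j} A_{i\ell}\, x_\ell \;\ge\; b_i - A_{ij} u_j,
\]
which is exactly the conic combination of $A^i x \ge b_i$ with the bound $x_j \le u_j$ (multiplier $A_{ij}$). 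You omitted this pairing and concluded that simply dropping $x_j \le u_j$ suffices; that is false.

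Concretely, take $P = \{x \in \R^2_+ \mid x_1 + x_2 \ge 3,\ x_1 \le 1,\ x_2 \le 5\}$ and $j = 1$. Your description is $\{x \in \R^2_+ \mid x_1 + x_2 \ge 3,\ x_2 \le 5\}$, which contains $(3,0)$. But $(3,0) \notin P + \cone(e_1)$: writing $(3,0) = (3 - t, 0) + t e_1$ with $(3-t,0) \in P$ forces $3 - t \le 1$ and $(3-t)+0 \ge 3$, which is impossible. The missing inequality is $x_2 \ge 2$, i.e.\ the combination of $x_1 + x_2 \ge 3$ with $x_1 \le 1$.

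This also invalidates your remark that ``the multiplier on each bound can be taken to be $0$''. The paper's proof constructs precisely these combined inequalities $\hat A^i x \ge \hat b_i$ (zeroing out the $j$-th column) and shows that $P + \cone(e_j)$ equals the covering polyhedron with bounds described by \emph{both} the original $Ax \ge b$ and the new $\hat A x \ge \hat b$, together with $x_\ell \le u_\ell$ for $\ell \ne j$. Your overall architecture (reduce to a single $e_j$ and iterate) matches the paper; it is the single-direction computation that needs to be fixed as above.
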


\begin{proof}
	Notice it suffices to show that for a single $e_j$, $P + \cone(e_j)$ is a covering polyhedron with bounds (the general statement follows by the repeated application of this result).
	
	So consider one such $e_j$. For every inequality $A^i x \ge b_i$ of the system $Ax \ge b$, let $\hat{A}^i x \ge \hat{b}_i$ be the sum of $A^i x \ge b_i$ and $-A_{ij} x_j \ge -A_{ij} u_j$. Note that $\hat{A}^i \ge 0$ and $\hat{A}_{ij} = 0$.
Let $\hat Ax \ge \hat b$ be the system comprising all such inequalities $\hat{A}^i x \ge \hat{b}_i$.
Let $\hat u$ be the vector obtained from $u$ by replacing $u_j$ with $\infty$.
We define the covering polyhedron with bounds $\hat P = \{x \in \mathbb{R}^n_+ \,|\, Ax \ge b, \ \hat Ax \ge \hat b, \ x \le \hat u\}$.
By construction, each covering inequality of $\hat P$ is a conic combination of one covering inequality of $P$ with the bound $x_j \le u_j$.
In the remainder of the proof we show $P + \cone(e_j) = \hat P$.

Since each inequality valid for $\hat{P}$ is also valid for $P$ and the recession cone of $\hat{P}$ contains $e_j$ ($A$ and $\hat{A}$ are non-negative and $\hat{u}_j = \infty$), we have $P + \cone(e_j) \subseteq \hat P$.

We now show the reverse inclusion $P + \cone(e_j) \supseteq \hat P$. Let $c^\top x \ge \delta$ be a valid inequality for $P + \cone(e_j)$. Equivalently, $c^\top  x \ge \delta$ is a valid inequality for $P$ with $c_j \ge 0$.
As a consequence, there exist nonnegative multipliers $\mu_i, \delta_i, \gamma_i$ such that 
$$c = \sum_{i=1}^m \mu_i A^i + \sum_{i=1}^n \delta_i e^i - \sum_{i=1}^n \gamma_i e^i \quad \text{and} \quad \delta \leq \delta_0:= \sum_{i=1}^m \mu_i b_i - \sum_{i=1}^n \gamma_i u_i.$$
Without loss of generality we can assume that at least one among $\delta_j$ and $\gamma_j$ equals zero.
In the latter case, the inequality $c^\top x \ge \delta$ is trivially valid for $\hat P$, thus we now assume $\delta_j = 0$ and $\gamma_j > 0$.
Since $c_j \ge 0$, we have $c_j = \sum_{i=1}^m \mu_i a_{ij} - \gamma_j \ge 0$.

Let $k \in \{1,\dots,m\}$ be the smallest index such that $\sum_{i=1}^k \mu_i A_{ij} \ge \gamma_j$.
In this way $\gamma_j - \sum_{i=1}^{k-1} \mu_i A_{ij} > 0$.
This allows us to define non-negative multipliers $\lambda_i$, $\lambda'_i$, for $i=1,\dots,m$:
\iffalse
\begin{align*}
\lambda_i = 
\begin{cases} 
0 &\mbox{if } i=1,\dots,k-1 \\ 
\mu_k- \frac{\gamma_j - \sum_{i=1}^{k-1} \mu_i A_{ij}}{A_{kj}} &\mbox{if } i=k \\ 
\mu_i &\mbox{if } i=k+1,\dots,m \\ 
\end{cases} 
\qquad 
\lambda'_i = 
\begin{cases} 
\mu_i &\mbox{if } i=1,\dots,k-1 \\ 
\frac{\gamma_j - \sum_{i=1}^{k-1} \mu_i A_{ij}}{A_{kj}} &\mbox{if } i=k \\ 
0 &\mbox{if } i=k+1,\dots,m. \\ 
\end{cases} 
\end{align*}
\fi
\begin{align*}
\lambda_i = 
\begin{cases} 
0  \\ 
\mu_k- \frac{\gamma_j - \sum_{i=1}^{k-1} \mu_i A_{ij}}{A_{kj}} \\ 
\mu_i  \\ 
\end{cases},
%\quad 
\ 
\lambda'_i = 
\begin{cases} 
\mu_i &\mbox{if } i=1,\dots,k-1 \\ 
\frac{\gamma_j - \sum_{i=1}^{k-1} \mu_i A_{ij}}{A_{kj}} &\mbox{if } i=k \\ 
0 &\mbox{if } i=k+1,\dots,m. \\ 
\end{cases} 
\end{align*}

%We denote by $\tilde a^i$, for $i=1,\dots,m$, the rows of $\tilde A$.
%Using $\tilde a^i = a^i - a^i_j$ and $\tilde \beta_i = \beta_i - a^i_j u_j$, it can be checked that 
It can be verified that:
$$c = \sum_{i=1}^m \lambda_i A^i + \sum_{i=1}^m \lambda'_i \hat A^i  + \sum_{i=1}^n \delta_i e^i - \sum_{\substack{i=1 \\ i \neq j}}^n \gamma_i e^i \ \ \text{and} \ \ \delta_0 = \sum_{i=1}^m \lambda_i b_i + \sum_{i=1}^m \lambda'_i \hat b_i - \sum_{\substack{i=1 \\ i \neq j}}^n \gamma_i u_i.$$
This implies that $c^\top x \ge \delta$ is valid for $\hat P$. 

This shows that every valid inequality for $P + \cone(e_j)$ is valid for $\hat P$, hence $P + \cone(e_j) \supseteq \hat{P}$ and we conclude the proof of the proposition. 
\end{proof}

	Next, we show that the integer hull of a covering polyhedron with bounds is also a covering polyhedron with bounds. 
	
	\begin{proposition}\label{prop:coverIntHull}
		Let $Q = \{x  \in \Z^n_+ \mid Ax \ge b, \ x \le u\}$ be a non-empty covering polyhedron with bounds (recall that $u$ is integral or infinite). Then its integer hull $Q^I$ is a covering polyhedron with bounds. Moreover, $Q^I$ has the same upper bounds as $Q$, namely $Q^I = \{x \in \R^n_+ \mid A'x \ge b', \ x \le u\}$ for some $(A',b')$.
	\end{proposition}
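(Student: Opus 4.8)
The plan is to reduce the claim to the unbounded covering case, for which integrality of the hull is classical, and then carry the bounds along. First I would observe that by Proposition~\ref{prop:coverUpward}, the set $\tilde Q := Q + \R^n_+$ (in its continuous relaxation) is a covering polyhedron, i.e.\ $\tilde Q = \{x \in \R^n_+ \mid \bar A x \ge \bar b\}$ with $\bar A, \bar b \ge 0$. The key point is that, because $u$ is integral (or $\infty$), the integer points of $Q$ and of $Q \cap \{x \le u\}$ viewed inside $\tilde Q$ are related: $Q \cap \Z^n = \tilde Q \cap \{x \le u\} \cap \Z^n$. Indeed, any integer point of $\tilde Q$ with $x \le u$ is of the form $x = y + d$ with $y \in Q$, $d \in \R^n_+$, and one can push $d$ down to $0$ coordinatewise (using that covering inequalities only improve when coordinates increase and that $u$ is an integral upper bound) to stay integral and feasible. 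So $Q^I = \bigl(\tilde Q \cap \{x \le u\}\bigr)^I$.

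Next I would handle $\tilde Q^I$, the integer hull of the pure covering polyhedron. Here the standard fact (e.g.\ via the Hilbert-basis / TDI theory for covering polyhedra, or directly from the fact that a rational polyhedron has a rational integer hull whose recession cone equals that of the original) gives that $\tilde Q^I$ is again a covering polyhedron: its recession cone is $\R^n_+$ and it is an up-set, so every facet-defining inequality has non-negative coefficients and can be normalized to the form $(A')^i x \ge b'_i$ with $A', b' \ge 0$; emptiness is excluded since $Q$, hence $\tilde Q$, is non-empty. Write $\tilde Q^I = \{x \in \R^n_+ \mid A' x \ge b'\}$.

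Then I would intersect back the bounds. The claim is $Q^I = \{x \in \R^n_+ \mid A' x \ge b',\ x \le u\}$. The inclusion $\subseteq$ is immediate: every integer point of $Q$ lies in $\tilde Q \subseteq \tilde Q^I$ and satisfies $x \le u$, and $Q^I$ is the convex hull of such points while the right-hand side is convex. For $\supseteq$, take an integer point $x$ of the right-hand side; then $x \in \tilde Q^I$, so $x$ is a convex combination of integer points of $\tilde Q$; but here I must be careful, as an integer point of $\tilde Q^I$ need not itself be integer-feasible with the $x \le u$ truncation in an obvious way. The cleaner route is to argue on the level of integer points directly: if $x \in \Z^n$, $x \ge 0$, $A'x \ge b'$, $x \le u$, then $x \in \tilde Q \cap \Z^n$ (since $A' x \ge b'$ with $A',b'\ge 0$ describes $\tilde Q$ and $x\ge 0$), and by the first step $\tilde Q \cap \{x\le u\} \cap \Z^n = Q \cap \Z^n$, so $x \in Q \cap \Z^n \subseteq Q^I$; then extend to all (not necessarily integral) points of the right-hand side by noting it is the convex hull of its integer points, which holds because it is itself a covering polyhedron with integral (or infinite) bounds — again invoking the structure that such sets are up-sets truncated by integral boxes and hence have integral vertices componentwise reachable, or simply by a second application of the same integer-hull reasoning.

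The main obstacle I anticipate is precisely this last bookkeeping: making rigorous that the truncated set $\{x \in \R^n_+ \mid A'x \ge b',\ x \le u\}$ equals the convex hull of its own integer points, i.e.\ that truncating the already-integral covering polyhedron $\tilde Q^I$ by the integral box $\{x \le u\}$ does not create fractional vertices that are not convex combinations of integer points of $Q$. This requires using integrality of $u$ together with the monotone (up-set) structure of covering inequalities — likely via a rounding argument: given a vertex $v$ of the truncated set, round each coordinate $v_j$ that is strictly below $u_j$ down, and leave coordinates at $u_j$ (which are integral) fixed; show the result still satisfies $A' x \ge b'$ after also possibly rounding some coordinates up within the box. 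A careful case analysis, or an appeal to Proposition~\ref{prop:coverUpward} applied in reverse together with a Chvátal-Gomory-type argument that bounded covering polyhedra with integral data have the desired property, will close this gap.
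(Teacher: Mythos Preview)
Your route is quite different from the paper's. The paper argues directly on a facet-defining inequality $\pi^\top x \ge \pi_0$ of $Q^I$: if some $\pi_j < 0$ and $\pi$ has support at least $2$, they take $n$ affinely independent integer tight points, find two whose $j$-th coordinates differ, and use the observation that replacing the $j$-th coordinate of one by the larger value still yields a point of $Q\cap\Z^n$ --- which then violates $\pi^\top x \ge \pi_0$. Hence the only facets with a negative coefficient are pure bounds $x_j \le u_j$, and $u\in Q\cap\Z^n$ pins these bounds at exactly $u$.

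Your reduction to the unbounded case is a reasonable idea, but the step you yourself flag is a genuine gap, not bookkeeping. You need $\tilde Q^I \cap \{x \le u\} = Q^I$, equivalently that this truncation is integral, and neither of your proposed fixes works. The ``second application of the same integer-hull reasoning'' is circular: you would be invoking the proposition on another covering-with-bounds polyhedron. The vertex-rounding idea fails because truncating an \emph{arbitrary} integral covering polyhedron by an integral box need not be integral --- e.g.\ $\{x\in\R^2_+ : x_1+2x_2\ge 2\}$ is integral, yet intersecting with $x_1\le 1$ creates the fractional vertex $(1,\tfrac12)$. What saves you here is not a general fact about integral polyhedra and integral boxes, but the specific identity $\tilde Q^I = Q^I + \R^n_+$ (obtainable from $\tilde Q\cap\Z^n = (Q\cap\Z^n)+\Z^n_+$, since $\min(z,u)\in Q\cap\Z^n$ for every integer $z\in\tilde Q$). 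You are then reduced to proving $(Q^I+\R^n_+)\cap\{x\le u\}=Q^I$, i.e.\ that $Q^I$ is up-closed inside $[0,u]$. This is true --- write $q=\sum_i\lambda_i q^i$ with $q^i\in Q\cap\Z^n$ and, coordinate by coordinate, mix each $q^i$ with the integer point obtained by raising its $j$-th coordinate to $u_j$ --- but it is precisely the combinatorial core of the proposition, and neither of your sketches supplies it.
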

	
	\begin{proof}
		We assume that $Q^I$ is non-empty, otherwise the result can be easily verified. Let $\pi^\top  x \ge \pi_0$ be a facet-defining inequality for $Q^I$. It suffices to show that either $(\pi, \pi_0) \ge 0$, or that this inequality is equivalent to an upper bound constraint $x_j \le u_j$ for some $j$ and $u_j$.
		
		First, suppose $\pi \ge 0$. Then $\pi_0$ must be non-negative, since otherwise the fact that $Q^I \subseteq \R^n_+$ would imply that the face of $Q^I$ induced by $\pi^\top  x \ge \pi_0$ is empty, contradicting that it is a facet. 
		
		Now consider the case where $\pi$ has at least one negative coordinate, say $\pi_j < 0$. If all other components of $\pi$ are equal to 0, then $\pi^\top  x \ge \pi_0$ is equivalent to an upper bound constraint: $$\pi^\top  x \ge \pi_0 ~\equiv~ \pi_j x_j \ge \pi_0 ~\equiv~ x_j \le \frac{-\pi_0}{\pi_j},$$
		where the sign/sense reversal in the last equivalence happens because $\pi_j$ is negative. Thus, to conclude the proof it suffices to consider the case where $\pi$ has support of size at least 2.
		
		We show that this case actually leads to a contradiction. The idea is to use the following property that can be immediately verified: if $\bar{x}, \bar{y}$ are \emph{integer} points in $Q^I$, then the point $\bar{z}$ obtained by taking $\bar{x}$ and replacing its $j^{\text{th}}$ component by $\max\{\bar{x}_j, \bar{y}_j\}$ also belongs to $Q^I$. Moreover, if $\bar{y}_j > \bar{x}_j$ we have that $\pi^\top  \bar{z} < \pi^\top \bar{x}$; we will use this to contradict the validity of $\pi^\top  x \ge \pi_0$. 
		
		To make this precise, since $\pi^\top  x \ge \pi_0$ is facet-defining, let $\bar{x}^1, \ldots, \bar{x}^n$ be affinely independent integer points in $Q^I$ that satisfy the  equality $\pi^\top  x = \pi_0$. Let $M = \max_i \bar{x}^i_j$ be the maximum value in the $j^{\text{th}}$ coordinate of these points. Observe that at least one of the points $\bar{x}^i$ has the $j^{\text{th}}$ coordinate strictly smaller than $M$: otherwise all points $\bar{x}^i$ would satisfy the linearly independent inequalities $\pi^\top  x = \pi_0$ and $x^i_j = M$ (the linear independence comes from the fact $\pi$ has support of size at least 2) and thus would lie in an $(n-2)$-dimensional space, contradicting that they are $n$ affinely independent points. 
		
		Thus, without loss of generality assume that $\bar{x}^1_j = M > \bar{x}^2_j$. Construct the point $\bar{z}$ by taking the vector $\bar{x}^2$ and replacing its $j^{\text{th}}$ coordinate by $\max\{\bar{x}^1_j, \bar{x}^2_j\} = \bar{x}^1_j$. As mentioned earlier, $\bar{z}$ belongs to $Q^I$ but $$\pi^\top  \bar{z} < \pi^\top  \bar{x}^2 = \pi_0,$$ thus contradicting the validity of $\pi^\top  x \ge \pi_0$. This concludes the proof that $Q^I$ is a covering set.
		
		To see that the upper bounds in $Q^I$ are the same as those in $Q$, let $Q^I = \{x \in \R^n_+ \mid A'x \ge b', \ x \le u'\}$ be a covering-with-bounds description of this set with minimal $u'$ (i.e. there is no other valid upper bound that is pointwise smaller than $u'$). Recall that $u$ is the vector of upper bounds in $Q$, which is an integral vector. Since $Q^I \subseteq Q \subseteq [0,u]$, the minimality of $u'$ guarantees that $u' \le u$. But since $Q$ is non-empty, it contains the point $u$, and so does the integer hull $Q^I$; thus, $u' \ge u$. This concludes the proof. 	 
	\end{proof}

	We also remark the following equivalent definition of $\alpha$-approximation, similar to that for the packing case; the first part of the statement follows directly from the definition of $\alpha$-approximation, and the second follows from Proposition \ref{prop:coverUpward} combined with Lemma 23 of \cite{molinaro2013understanding}.
	
	\begin{proposition} \label{prop:blowupCover}
		Consider two covering sets $U \supseteq V$. 
		%Let $Q$ be a covering polyhedron with bounds, and let $U \supseteq V \supseteq Q$ be two relaxations. 
		Then $U$ is an $\alpha$-approximation of $V$ iff $U + \R^n_+$ is an $\alpha$-approximation of $V + \R^n_+$. Moreover, this happens iff $\frac{1}{\alpha}(U + \R^n_+) \subseteq (V + \R^n_+)$.
	\end{proposition}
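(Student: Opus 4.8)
The plan is to handle the two equivalences separately, using throughout the fact that passing to the up-closures $U+\R^n_+$ and $V+\R^n_+$ does not change any minimum taken against a nonnegative objective. First I would record the elementary identity that for every $c\in\R^n_+$ and every set $S$ one has $\min\{c^\top x\mid x\in S\}=\min\{c^\top x\mid x\in S+\R^n_+\}$: indeed $S\subseteq S+\R^n_+$ gives ``$\le$'' for the right-hand side, while for any point $x+d$ with $d\ge 0$ we have $c^\top(x+d)=c^\top x+c^\top d\ge c^\top x$, which gives ``$\ge$''. Applying this identity once with $S=U$ and once with $S=V$ shows that the two minima in the definition of ``$U$ is an $\alpha$-approximation of $V$'' are literally the same numbers as the two minima in the definition of ``$U+\R^n_+$ is an $\alpha$-approximation of $V+\R^n_+$''. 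Hence the first ``iff'' holds directly by the definition of $\alpha$-approximation, with no further work.

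For the second equivalence I would pass once and for all to the up-closed sets $U':=U+\R^n_+$ and $V':=V+\R^n_+$. By Proposition~\ref{prop:coverUpward} each of these is again a covering set (a covering polyhedron when $U,V$ are), so in particular each is closed, convex, and satisfies $S=S+\R^n_+$. The engine of the argument is the support-function description of such up-closed covering sets, which is precisely Lemma~23 of \cite{molinaro2013understanding}: a closed convex up-set is pinned down by its lower support function $c\mapsto\min\{c^\top x\mid x\in S\}$ restricted to $c\in\R^n_+$, and for two such sets inclusion is equivalent to the pointwise comparison of their lower support functions. Instantiating this characterization at $U'$ and $V'$ turns the whole family of scalar approximation inequalities $\min\{c^\top x\mid x\in U'\}\ge\frac1\alpha\min\{c^\top x\mid x\in V'\}$ (ranging over $c\in\R^n_+$) into a single set containment, which after accounting for the scalar $\frac1\alpha$ is exactly the stated $\frac1\alpha(U+\R^n_+)\subseteq V+\R^n_+$. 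Combined with the first equivalence, this proves the proposition.

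The step I expect to require the most care is exactly the bookkeeping of the factor $\frac1\alpha$ when translating the pointwise inequality of support functions into a containment, since the entire content of the second equivalence is which of the two sets carries the $1/\alpha$ scaling. Here I would use the positive homogeneity $\min\{c^\top x\mid x\in\beta S\}=\beta\,\min\{c^\top x\mid x\in S\}$ for $\beta>0$ to rewrite the scalar factor as a dilation of one of the two sets before invoking the characterization of \cite{molinaro2013understanding}, so that the direction of the resulting inclusion is forced rather than guessed. A secondary technical point is that Proposition~\ref{prop:coverUpward} is phrased for covering polyhedra, so for covering \emph{sets} given by infinitely many inequalities I would first argue that $U+\R^n_+$ and $V+\R^n_+$ are still closed up-closed covering sets (taking the closure if necessary, which leaves every minimum against $c\in\R^n_+$ unchanged), so that Lemma~23 of \cite{molinaro2013understanding} still applies.
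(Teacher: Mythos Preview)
Your proposal is correct and follows essentially the same approach as the paper: the paper's proof is a one-line sketch stating that the first equivalence is immediate from the definition of $\alpha$-approximation and that the second follows from Proposition~\ref{prop:coverUpward} combined with Lemma~23 of \cite{molinaro2013understanding}, which is exactly the route you take (with more detail). Your attention to the polyhedral-versus-general-covering-set issue in applying Proposition~\ref{prop:coverUpward} is a valid technical point that the paper glosses over.
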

	
	Finally, we need the following property, which states that for covering polyhedra with the \emph{same} upper bounds we can commute adding $\R^n_+$ and taking intersections.
	
	\begin{proposition} \label{prop:commCover}
		Let $\{Q^i\}_{i \in I}$ be a (possibly infinite) family of covering polyhedra with bounds such that all upper bounds are the same, namely $Q^i = \{x \in \R^n_+ \mid G(i) x \geq g(i), \ x \le u\}$ for all $i \in I$ (where $G(i) \in \R_+^{m_i \times n},~ g(i) \in \R_+^{m_i}$). Then $$\bigcap_{i \in I} (Q^i + \R^n_+) = \bigg( \bigcap_{i \in I} Q^i \bigg) + \R^n_+.$$
	\end{proposition}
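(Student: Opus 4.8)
The plan is to prove the set equality by establishing the two inclusions separately, dispatching the easy one first. The inclusion $\big(\bigcap_{i\in I} Q^i\big) + \R^n_+ \subseteq \bigcap_{i \in I}(Q^i + \R^n_+)$ is immediate: if $x = q + r$ with $q \in \bigcap_{i \in I} Q^i$ and $r \in \R^n_+$, then for every $i$ we have $q \in Q^i$, hence $x \in Q^i + \R^n_+$, so $x$ lies in the intersection on the right.

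The substance is the reverse inclusion, and the key idea is to exhibit a \emph{single} point $q$ that lies in every $Q^i$ and is dominated componentwise by $x$; this is precisely where the hypothesis that all the $Q^i$ share the same bound vector $u$ is used. Given $x \in \bigcap_{i \in I}(Q^i + \R^n_+)$, first note $x \ge 0$ since each $Q^i + \R^n_+ \subseteq \R^n_+$. I would then set $q := \min\{u, x\}$ componentwise (reading $\min\{\infty, t\} = t$ for coordinates with $u_j = \infty$), so that $0 \le q \le u$. To check $q \in Q^i$ for a fixed $i$: membership $x \in Q^i + \R^n_+$ yields some $y^i \in Q^i$ with $y^i \le x$, and $y^i \in Q^i$ also forces $y^i \le u$; hence $y^i \le \min\{u, x\} = q$, and since $G(i) \ge 0$, monotonicity gives $G(i) q \ge G(i) y^i \ge g(i)$. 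Together with $0 \le q \le u$ this shows $q \in Q^i$, and since $i$ was arbitrary, $q \in \bigcap_{i \in I} Q^i$. Finally $q \le x$ gives $x = q + (x - q)$ with $x - q \in \R^n_+$, which completes the inclusion.

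I do not expect a genuine obstacle here: once the uniform witness $q = \min\{u, x\}$ is written down, the verification is routine monotonicity bookkeeping. The only points needing a little care are the coordinates with $u_j = \infty$, and the degenerate case where some $Q^i$ (equivalently some $Q^i + \R^n_+$) is empty, in which both sides are empty — and in fact the argument above already shows that $\bigcap_{i}(Q^i + \R^n_+)$ being nonempty forces $\bigcap_i Q^i$ nonempty. It is worth stressing in the final write-up that the argument truly uses the common $u$: with distinct bounds $u^{(i)}$ one would only obtain $y^i \le u^{(i)}$, which does not bound the family $\{y^i\}$ below by any single $q$, and the statement would fail.
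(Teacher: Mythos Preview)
Your proof is correct and essentially identical to the paper's: both dispatch the inclusion $\supseteq$ trivially and, for $\subseteq$, take $x$ in the intersection, set $q := \min\{x,u\}$ componentwise, and use $y^i \le x$, $y^i \le u$ (your $y^i$ is the paper's $q^i$) together with nonnegativity of $G(i)$ to conclude $q \in Q^i$ for all $i$. Your write-up is slightly more careful in noting $x \ge 0$, handling $u_j = \infty$, and observing how the common bound $u$ is essential.
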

	
	\begin{proof}
		The direction ``$\supseteq$'' is straightforward, so we prove the direction ``$\subseteq$''. Consider a point $x \in \bigcap_{i \in I} (Q^i + \R^n_+)$, so we can write $x = q^i + r^i$ for $q^i \in Q^i$ and $r^i \ge 0$. The idea is that if we push all the $q^i$'s coordinates as high as possible (correcting appropriately the $r^i$'s) we can actually get the same point in all the $Q^i$'s.
		
		More explicitly, define the point $q \in \R^n$ as follows: if $x_j$ is at most the upper bound $u_j$, set $q_j = x_j$, else set $q_j = u_j$ (so $q = \min\{x, u\}$). We claim that $q$ belongs to $Q^i$ for all $i$. First, since $q^i \le x$ and $q^i \le u$, we have that $q^i \le q$; therefore, since $q^i$ satisfies the covering constraints of $Q^i$, so does $q$. Moreover, $q \le u$, so $q$ also satisfies the upper bound constraints of $Q^i$; thus $q \in Q^i$. We then get that the point $q + (x - q)$ belongs to $( \bigcap_{i \in I} Q^i) + \R^n_+$. This shows the desired inclusion and concludes the proof. 
	\end{proof}
	
%	Again, to simplify the notation, given a covering integer set $Q$ and a non-negative objective function $c \in \R^n_+$ define $z^{1C} := \min\{c^\top x \mid x \in 1\C(Q)\}$ as the optimal value over the closure $1\C(Q)$, and similarly for all the other closures. 

	We can now start the proof of Theorem \ref{thm:cover}.

%######################################################
%######################################################

	\subsection{Proof of Theorem \ref{thm:cover}} \label{sec:proofCover}

%\red{M: move the KC definition inside the next subsection?}	
A central object for our proof are the \emph{knapsack-cover} inequalities~\cite{wolsey:1975}. Consider a covering polyhedron with bounds $Q = \{x \in \R^n_+ \mid Ax \ge b, ~x \le u\}$. A knapsack-cover (KC) inequality is generated as follows: Consider a single row $A^i x \ge b_i$ of this problem; given a subset $S \subseteq [n]$ of the variables, the corresponding KC inequality is given by $\sum_{j \notin S} \tilde{A}_{ij} x_j \ge b_i - \sum_{j \in S} u_j A_{ij}$, where $\tilde{A}_{ij} = \min\{A_{ij}, b_i - \sum_{j \in S} u_j A_{ij}\}$. Notice that the KC inequalities are indeed valid for $Q$. Again, we use $KC(Q)$ to denote the \emph{KC closure} (namely the set obtained by adding all the KC inequalities to the linear relaxation of $Q$), and for a given objective function we use $z^{KC}$ to denote the optimal value of optimizing this function over $KC(Q)$.

	We break down the proof of Theorem \ref{thm:cover} by first comparing $1\A(Q)$ versus $\A(Q)$; we then compare $1\C(Q)$ versus $\C(Q)$, which is significantly more involved.

\subsubsection{Proof for aggregation closure}

	\paragraph{Upper bound.} Observe that the 1-row closure is at least as strong as the $KC$ closure by construction of the KC inequalities. We need the following result, which states that for a 1-row covering polyhedron with bounds, the KC closure is a 2-approximation of the integer hull.

\begin{theorem}[\cite{carr2000strengthening}] \label{thm:carr}
Consider a 1-row covering polyhedron with bounds $Q = \{x \in \Z^n_+ \mid ax \ge b, \ x \le u\}$. Then the KC closure $KC(Q)$ is a 2-approximation of the integer hull $Q^I$.
\end{theorem}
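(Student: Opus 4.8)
This theorem is the knapsack-cover rounding bound of \cite{carr2000strengthening}, and the plan is to reproduce that argument. Since $KC(Q)\supseteq Q^I$, by the definition of $\alpha$-approximation it suffices to show that for every $c\in\R^n_+$ and every $x^*\in KC(Q)$ there is an integral point $\bar x\in Q\cap\Z^n$ with $c^\top\bar x\le 2\,c^\top x^*$; applying this to an optimal $x^*$ gives $z^I\le c^\top\bar x\le 2\,z^{KC}$, i.e.\ $z^{KC}\ge\tfrac12 z^I$. Two harmless normalizations come first: if $a_j>0$ we may replace an infinite $u_j$ by $\lceil b/a_j\rceil$, which changes neither $Q$, $Q^I$, nor the KC inequalities that matter, so we assume $u$ finite and integral; and columns with $a_j=0$ are irrelevant to covering and can be rounded by the same rule used below (set $\bar x_j=u_j$ if $x^*_j\ge u_j/2$, else $\bar x_j=0$), so we may assume $a_j>0$ for all $j$.

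For the rounding, let $S:=\{\,j:x^*_j\ge u_j/2\,\}$ and set $\bar x_j:=u_j$ for $j\in S$ (note $c_ju_j\le 2c_jx^*_j$ for these $j$, since $x^*_j\le u_j$). If already $\sum_{j\in S}a_ju_j\ge b$, then $\bar x:=\sum_{j\in S}u_je_j$ lies in $Q\cap\Z^n$ and $c^\top\bar x=\sum_{j\in S}c_ju_j\le 2\sum_{j\in S}c_jx^*_j\le 2\,c^\top x^*$, and we are done. Otherwise let $r:=b-\sum_{j\in S}a_ju_j>0$. The crucial input is the KC inequality indexed by exactly this set $S$, which reads $\sum_{j\notin S}\tilde a_j\,x^*_j\ge r$ with truncated coefficients $\tilde a_j:=\min\{a_j,r\}\le r$; note that $x^*_j<u_j/2\le u_j$ for $j\notin S$. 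We complete $\bar x$ on the coordinates outside $S$ by a greedy cover of the residual demand $r$: order $\{j\notin S\}$ by nondecreasing efficiency ratio $c_j/\tilde a_j$ and raise a shortest prefix $T$ of these coordinates so that the added capacity reaches $r$ (in the $0$--$1$ case this is just a prefix of items set to $1$, and it exists because $x^*_j<1/2$ forces $\sum_{j\notin S}\tilde a_j>2r$). Then $\bar x\in Q\cap\Z^n$, since $a^\top\bar x\ge\big(b-r\big)+\sum_{j\in T}\tilde a_j\ge b$ and $\bar x\le u$.

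The heart of the matter is the cost estimate $\sum_{j\in T}c_j\le 2\sum_{j\notin S}c_jx^*_j$, which exploits the special shape of the truncated coefficients twice: because each $\tilde a_j\le r$ the prefix $T$ overshoots by less than another $r$, so $\sum_{j\in T}\tilde a_j<2r$, while multiplying the KC inequality by $2$ shows $\sum_{j\notin S}\tilde a_j(2x^*_j)\ge 2r$. Combining $\sum_{j\in T}\tilde a_j<2r\le\sum_{j\notin S}\tilde a_j(2x^*_j)$ with the efficiency ordering through a short averaging computation of the form
$$\sum_{j\in T}c_j-\sum_{j\notin S}c_j(2x^*_j)\ \le\ \rho_T\Big(\sum_{j\in T}\tilde a_j-\sum_{j\notin S}\tilde a_j(2x^*_j)\Big)\ <\ 0,$$
where $\rho_T$ is the largest ratio used in $T$, gives the bound, and hence $c^\top\bar x\le 2\,c^\top x^*$. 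I expect this residual cost estimate to be the main obstacle: it is precisely where one sees why the KC inequalities, with their $\min\{a_j,\,\cdot\,\}$ truncation and the matching $\tfrac12$-threshold on $x^*$, are ``just strong enough'' to lose only a factor $2$. The other delicate point is passing from $0$--$1$ variables to arbitrary integral upper bounds: there, after fixing $S$ one is left with a knapsack-cover problem on $[n]\setminus S$ for which $x^*$ restricted to those coordinates still satisfies every KC inequality (the residual KC inequalities are exactly the original ones indexed by supersets of $S$), so the construction is iterated; we would follow \cite{carr2000strengthening} for the bookkeeping of that step.
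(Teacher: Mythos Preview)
The paper does not prove Theorem~\ref{thm:carr}; it is quoted from \cite{carr2000strengthening} and used as a black box in the proof of Proposition~\ref{prop:KCvsA}. So there is no paper proof to compare against, and your proposal is in effect a reconstruction of the Carr--Fleischer--Leung--Phillips argument.

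Your $0$--$1$ argument is correct: the threshold set $S=\{j:x^*_j\ge\tfrac12\}$, the single KC inequality indexed by that $S$, the greedy prefix $T$ with $r\le\sum_{j\in T}\tilde a_j<2r$, and the averaging inequality all go through as you describe. (One small slip in the feasibility line: insert the step $a_j\ge\tilde a_j$ to pass from $\sum_{j\in T}a_j$ down to $\sum_{j\in T}\tilde a_j$.)

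The one genuine issue is your treatment of general integral bounds. The iteration you sketch stalls immediately: once you have set $\bar x_j=u_j$ for $j\in S$, every remaining coordinate already satisfies $x^*_j<u_j/2$, so the residual threshold set is empty and you are back to the same instance. Fortunately no iteration is needed, because your $0$--$1$ argument already contains the full proof. Order $j\notin S$ by $c_j/\tilde a_j$ and greedily raise variables in that order, each up to its bound $u_j$, stopping at the smallest integer allocation whose $\tilde a$-capacity reaches $r$; since each single \emph{unit} contributes at most $\tilde a_j\le r$, the overshoot is still strictly less than $r$, hence total $\tilde a$-capacity $<2r$. The same averaging computation now compares the greedy allocation $(\bar x_j)_{j\notin S}$ with the fractional allocation $(2x^*_j)_{j\notin S}$: writing $p$ for the last index used and $\rho_p=c_p/\tilde a_p$, one has $(c_j-\rho_p\tilde a_j)(\bar x_j-2x^*_j)\le 0$ in every case (for $j$ before $p$ the first factor is $\le 0$ and the second is $u_j-2x^*_j>0$; for $j=p$ the first factor vanishes; for $j$ after $p$ the first factor is $\ge 0$ and the second is $-2x^*_j\le 0$). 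Summing and adding $\rho_p$ times the capacity comparison gives $\sum_{j\notin S}c_j\bar x_j\le\sum_{j\notin S}c_j(2x^*_j)$ exactly as in your $0$--$1$ display. So the general-bound case is not the obstacle you anticipated; only the ``one unit at a time'' reading of the greedy is missing from your write-up.
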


Since the aggregation closure is the intersection of the integer hull of multiple 1-row covering polyhedra, we leverage the theorem above to show that the KC closure is also a 2-approximation for the aggregation closure of a multi-row covering polyhedron. 

\begin{proposition}
\label{prop:KCvsA}
For every covering polyhedron with bounds $Q$ we have that the KC closure $KC(Q)$ is a 2-approximation of the aggregation closure $\A(Q)$.
\end{proposition}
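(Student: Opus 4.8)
The plan is to reduce everything, via the blow-up characterization in Proposition~\ref{prop:blowupCover}, to a single polyhedral inclusion, and then to obtain that inclusion by comparing the KC closure of $Q$ with the KC closures of its $1$-row aggregations. Write $Q=\{x\in\R^n_+\mid Ax\ge b,\ x\le u\}$ and, for $\lambda\in\R^m_+$, let $Q_\lambda:=\{x\in\R^n_+\mid \lambda^\top Ax\ge \lambda^\top b,\ x\le u\}$, a single-row covering polyhedron with bounds, so that $\A(Q)=\bigcap_{\lambda\in\R^m_+}(Q_\lambda)^I$. We may assume $\A(Q)\ne\emptyset$, so each $(Q_\lambda)^I\ne\emptyset$. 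First note $\A(Q)\subseteq KC(Q)$: the original covering inequalities and the bounds are valid for $\A(Q)$ since $\A(Q)\subseteq(Q_{e_i})^I\subseteq\{x\in\R^n_+\mid A^ix\ge b_i,\ x\le u\}$ for each $i$, and each KC inequality of $Q$ generated from row $i$ is valid for $(Q_{e_i})^I\supseteq\A(Q)$. Hence $KC(Q)$ and $\A(Q)$ are covering sets with $KC(Q)\supseteq\A(Q)$, and by Proposition~\ref{prop:blowupCover} it suffices to prove
\begin{equation}\label{eq:KCgoal}
\tfrac{1}{2}\bigl(KC(Q)+\R^n_+\bigr)\ \subseteq\ \A(Q)+\R^n_+ .
\end{equation}

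The key step, which I expect to be the main obstacle, is the claim that for every $\lambda\in\R^m_+$,
\begin{equation}\label{eq:KCincl}
KC(Q)\ \subseteq\ KC(Q_\lambda),
\end{equation}
where $KC(Q_\lambda)$ is the KC closure of the single-row polyhedron $Q_\lambda$. To prove \eqref{eq:KCincl} it is enough to show that every defining inequality of $KC(Q_\lambda)$ is valid for $KC(Q)$. The relaxation inequality $\lambda^\top Ax\ge\lambda^\top b$ is a nonnegative combination of the rows $A^ix\ge b_i$, and the bounds are shared, so the only nontrivial case is a KC inequality of $Q_\lambda$. Setting $c:=\lambda^\top A$, $b_i(S):=b_i-\sum_{j\in S}A_{ij}u_j$ and $\beta(S):=\lambda^\top b-\sum_{j\in S}c_ju_j=\sum_i\lambda_ib_i(S)$, such an inequality (nontrivial only when $\beta(S)>0$) is $\sum_{j\notin S}\min\{c_j,\beta(S)\}x_j\ge\beta(S)$. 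The naive bound by $\sum_i\lambda_i\cdot(\text{row-}i\text{ KC inequality for }S)$ breaks when some rows already ``cover'' $S$, i.e.\ $b_i(S)\le 0$. Instead I would use the convex combination
\[
\sum_{i:\,b_i(S)>0}\nu_i\Bigl[\ \sum_{j\notin S}\min\{A_{ij}/b_i(S),\,1\}\,x_j\ \ge\ 1\ \Bigr],\qquad
\nu_i:=\frac{\lambda_ib_i(S)}{\sum_{i':\,b_{i'}(S)>0}\lambda_{i'}b_{i'}(S)},
\]
of the normalized KC inequalities of $Q$ (the index set $\{i:b_i(S)>0\}$ is nonempty since $\beta(S)>0$). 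Using $\sum_{i:\,b_i(S)>0}\lambda_ib_i(S)\ge\beta(S)>0$ and $\sum_{i:\,b_i(S)>0}\lambda_iA_{ij}\le c_j$, one checks that this combination has right-hand side $1$ and each coefficient at most $\min\{1,\,c_j/\beta(S)\}$, so over $\R^n_+$ it implies $\sum_{j\notin S}\min\{c_j,\beta(S)\}x_j\ge\beta(S)$; being a nonnegative combination of valid inequalities of $KC(Q)$, this KC inequality of $Q_\lambda$ is valid for $KC(Q)$. This establishes \eqref{eq:KCincl}.

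Given \eqref{eq:KCincl}, the assembly is short. By Theorem~\ref{thm:carr}, $KC(Q_\lambda)$ is a $2$-approximation of $(Q_\lambda)^I$, so Proposition~\ref{prop:blowupCover} gives $\tfrac{1}{2}\bigl(KC(Q_\lambda)+\R^n_+\bigr)\subseteq(Q_\lambda)^I+\R^n_+$; combined with \eqref{eq:KCincl} this yields $\tfrac{1}{2}\bigl(KC(Q)+\R^n_+\bigr)\subseteq(Q_\lambda)^I+\R^n_+$ for every $\lambda\in\R^m_+$. Intersecting over $\lambda$, and using that by Proposition~\ref{prop:coverIntHull} all the sets $(Q_\lambda)^I$ are covering polyhedra with the \emph{same} upper bounds $u$ (so Proposition~\ref{prop:commCover} applies), we get
\[
\tfrac{1}{2}\bigl(KC(Q)+\R^n_+\bigr)\ \subseteq\ \bigcap_{\lambda\in\R^m_+}\bigl((Q_\lambda)^I+\R^n_+\bigr)\ =\ \Bigl(\bigcap_{\lambda\in\R^m_+}(Q_\lambda)^I\Bigr)+\R^n_+\ =\ \A(Q)+\R^n_+ ,
\]
which is \eqref{eq:KCgoal}. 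By Proposition~\ref{prop:blowupCover} this says precisely that $KC(Q)$ is a $2$-approximation of $\A(Q)$, as desired.
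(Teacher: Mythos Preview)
Your proof is correct and follows the same overall architecture as the paper's: establish the key inclusion $KC(Q)\subseteq KC(Q_\lambda)$ for every $\lambda\in\R^m_+$, invoke Theorem~\ref{thm:carr} on each $Q_\lambda$, and then assemble via Propositions~\ref{prop:blowupCover}, \ref{prop:coverIntHull}, and \ref{prop:commCover}.

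The only substantive difference is in how you prove $KC(Q)\subseteq KC(Q_\lambda)$. The paper simply takes the straight aggregation $\sum_i\lambda_i\,kc_i$ of the row-$i$ KC inequalities for the same set $S$ and argues that $\sum_i\lambda_i\tilde A_{ij}\le\min\{c_j,\beta(S)\}$ because $\tilde A_{ij}\le A_{ij}$ and $\tilde A_{ij}\le b_i(S)$. You instead restrict to the rows with $b_i(S)>0$ and use the normalized weights $\nu_i=\lambda_ib_i(S)/D$ with $D=\sum_{i:b_i(S)>0}\lambda_ib_i(S)$, which is exactly what is needed to handle the case where some rows already cover $S$ (i.e.\ $b_i(S)\le 0$): in that regime the formula $\tilde A_{ij}=\min\{A_{ij},b_i(S)\}$ would produce non-positive coefficients, and the corresponding ``KC inequality'' is not a valid member of $KC(Q)$, so it cannot legitimately appear in the aggregation. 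Your normalization fixes this cleanly (and, in effect, the paper's argument can be repaired by the same scaling $\beta(S)/D$). So your route is the same in spirit but more careful on this technical point; you also make explicit the containment $\A(Q)\subseteq KC(Q)$ needed to speak of an $\alpha$-approximation, which the paper leaves implicit.
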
	

\begin{proof}	
Let $Q = \{ x \in \R^n_+ \mid Ax \ge b, \ x \le u\}$ and consider $Q_{\lambda} := \{  x \in \R^n_+ \mid \lambda^\top Ax \ge \lambda^\top b, \ x \le u \}$ for some $\lambda \in \R^n_+$. First we connect the KC closure of $Q$ with the KC closure of the 1-row covering set $Q_\lambda$, proving the intuitive fact that $KC(Q) \subseteq KC(Q_\lambda)$.

For that, consider a KC inequality 
$$kc := \{\sum_{j \notin S} \widetilde{(\lambda^\top A)_j} x_j \ge \lambda^\top b  -  \sum_{j \in S} (\lambda^\top A)_j u_j \}$$
 for $Q_{\lambda}$ and let $kc_i = \{\sum_{j \notin S} \tilde{A}_{ij} x_j \ge b_i- \sum_{j \in S} A_{ij} u_j\}$ be the corresponding KC inequality for the $i^{\text{th}}$ row of $P$. We show that $kc$ is dominated by the inequalities $kc_i$'s, namely $kc \cap \mathbb{R}^n_+ \supseteq \bigcap_i (kc_i \cap \mathbb{R}^n_+)$. Consider the aggregation $\sum_i \lambda_i kc_i \equiv \sum_{j \notin S} (\sum_i \lambda_i \tilde{A}_{ij}) x_j \ge \lambda^\top b  -  \sum_{j \in S} (\lambda^\top A)_j u_j$; it suffices to show that this dominates $kc$. %with respect to the cone $\mathbb{R}^n_+$.
 The RHS's are the same, so it suffices to compare LHS's. Since $\tilde{A}_{ij} = \min\{A_{ij}, b_i - \sum_{j \in S} A_{ij} u_j\}$, it follows that $\sum_i \lambda_i \tilde{A}_{ij} \le \min\{\sum_i \lambda_i A_{ij}, \lambda^\top b - \sum_{j \in S} (\lambda^\top A)_j u_j\}$, which is exactly the $j^{\text{th}}$ entry in the LHS of $kc$. This proves that $KC(Q) \subseteq KC(Q_\lambda)$.
 
 Employing the alternative definition of $\alpha$-approximation given by Proposition \ref{prop:blowupCover} with Theorem \ref{thm:carr}, we get that for every $\lambda$
		\begin{align*}
			(KC(Q) + \R^n_+) \subseteq (KC(Q_{\lambda}) + \R^n_+) \subseteq \frac{1}{2} (Q^I_\lambda + \R^n_+).
		\end{align*}
	From Proposition \ref{prop:coverIntHull} we have that $Q^I_\lambda$ is a covering polyhedron with bounds, and that the upper bounds are that same as in $Q_\lambda$, which are the upper bounds of $Q$. Since all these bounds are the same, we can take intersection of the last displayed inequality over all $\lambda$'s and used the commutativity from Proposition \ref{prop:commCover} to obtain that 
			%	
\iffalse
		{
		\begin{align*}
			(KC(Q) + \R^n_+) \ \subseteq\  \bigcap_{\lambda \in \R^m_+} \frac{1}{2} (Q^I_\lambda + \R^n_+)\  \stackrel{\textrm{Obs} \ref{obs:bijection}}{=} \ \frac{1}{2}\bigcap_{\lambda \in \R^m_+} (Q^I_\lambda + \R^n_+)\ = \ \frac{1}{2} \bigg( \bigcap_{\lambda \in \R^m_+} Q^I_\lambda \bigg) + \R^n_+.
		\end{align*}}
\fi
		{
		\begin{align*}
			&(KC(Q) + \R^n_+) \ \subseteq\  \bigcap_{\lambda \in \R^m_+} \frac{1}{2} (Q^I_\lambda + \R^n_+)\  \stackrel{\textrm{Obs} \ref{obs:bijection}}{=} \\
			&\  \stackrel{\textrm{Obs} \ref{obs:bijection}}{=} \ \frac{1}{2}\bigcap_{\lambda \in \R^m_+} (Q^I_\lambda + \R^n_+)\ = \ \frac{1}{2} \bigg( \bigcap_{\lambda \in \R^m_+} Q^I_\lambda \bigg) + \R^n_+.
		\end{align*}}
The right-hand side of this expression is exactly $\frac{1}{2}(\A(Q) + \R^n_+)$, thus employing Proposition \ref{prop:blowupCover} once again we get that the KC closure $KC(Q)$ is a 2-approximation for the aggregation closure $\A(Q)$. This concludes the proof.
\end{proof}

Hence, we obtain that $1\A(Q)$ is a 2-approximation to $\A(Q)$. 

%#####################################################

\paragraph{Tight examples.} We next exhibit an instance where $1\A$ is not better than a 2-approximation of $\A$.

\begin{proposition} 
\label{prop:LBAC}
 Let $\epsilon >0$. There exists an instance where $\frac{z^{\A}}{z^{1\A}} \geq 2 - \epsilon$ and $\frac{z^{\C}}{z^{1\C}} \geq 2 - \epsilon$.
% and an instance where $\frac{z^{\C}}{z^{1C}} \geq 2 - \epsilon$.
\end{proposition}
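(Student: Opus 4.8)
The plan is to exhibit, for each integer $n \ge 3$, an explicit $n$-dimensional covering polyhedron (with no finite upper bounds) together with the objective $\min \sum_{j=1}^n x_j$, and to show that for it $\frac{z^{\A}}{z^{1\A}} = \frac{z^{\C}}{z^{1\C}} = \frac{2(n-1)}{n}$, which tends to $2$ as $n \to \infty$; given $\e > 0$ it then suffices to take $n > 2/\e$. The instance I would use is
\[
 Q_n := \{\, x \in \R^n_+ \mid \sum_{j \ne i} x_j \ge 1 \ \text{ for all } i \in [n] \,\},
\]
i.e.\ the constraint matrix $A$ is the $n \times n$ zero/one matrix with zero diagonal and $b = \mathbf 1$. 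The guiding intuition is that no single constraint of $Q_n$ carries any integrality gap --- each one already defines an integral polyhedron and so contributes nothing beyond the LP relaxation to the $1$-row closures --- whereas the integrality gap of $Q_n$ (which tends to $2$) is produced solely by the single uniform aggregation $\lambda = \mathbf 1$, an aggregation that the full closures can exploit.

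I would carry out the verification in four short steps, establishing the chain $z^{LP} = z^{1\A} = z^{1\C} < z^{\A} = z^{\C} = z^I$. First, $z^{LP} = \frac{n}{n-1}$: the point $\frac{1}{n-1}\mathbf 1$ is feasible with this value, and summing all $n$ constraints yields $(n-1)\sum_j x_j \ge n$, which certifies optimality. Second, each single-constraint polyhedron $\{x \in \R^n_+ \mid \sum_{j \ne i} x_j \ge 1\}$ has as its vertices exactly the $e_k$ with $k \ne i$, so it is integral; hence both its CG closure and its aggregation closure equal itself, whence $1\C(Q_n) = 1\A(Q_n)$ coincides with the LP relaxation and $z^{1\C} = z^{1\A} = \frac{n}{n-1}$. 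Third, for the upper bound on the full closures, $e_1 + e_2$ is feasible for $Q_n$ and therefore lies in $Q_n^I \subseteq \A(Q_n) \subseteq \C(Q_n)$, so $z^{\C} \le z^{\A} \le 2$ (and indeed $z^I = 2$). Fourth, for the matching lower bound, the aggregated inequality $\mathbf 1^\top A x \ge \mathbf 1^\top b$ is $(n-1)\sum_j x_j \ge n$; rounding its right-hand side and using $\lceil \frac{n}{n-1} \rceil = 2$ gives the valid inequality $\sum_j x_j \ge 2$, which is at once a Chv\'atal--Gomory cut and an aggregation cut of $Q_n$ (it is valid for $\conv\{\, x \in \Z^n_+ \mid (n-1)\sum_j x_j \ge n \,\}$). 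Hence $\A(Q_n), \C(Q_n) \subseteq \{x \in \R^n_+ \mid \sum_j x_j \ge 2\}$ and $z^{\A}, z^{\C} \ge 2$. Combining the four steps, $\frac{z^{\A}}{z^{1\A}} = \frac{z^{\C}}{z^{1\C}} = \frac{2}{n/(n-1)} = 2 - \frac{2}{n}$, as claimed.

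The one genuinely nontrivial part is finding the right instance. The naive two-variable ``dual'' of the packing tight example --- something like $x_1 + M x_2 \ge M$, $M x_1 + x_2 \ge M$ --- does not work: two ``crossing'' covering constraints already force $z^{LP} \approx 2$, so there is no integrality gap left for the closures to differ on. The remedy is to spread the constraint structure across $n$ variables so that each individual constraint stays integral (hence useless to the $1$-row closures) while the single uniform aggregation $\lambda = \mathbf 1$ all by itself produces the full integrality gap. After the instance is chosen, each of the four steps above is a routine finite computation, so I anticipate no further obstacle.
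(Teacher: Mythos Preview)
Your proposal is correct and follows essentially the same approach as the paper: both construct a symmetric $n$-row covering instance in which every individual row defines an integral polyhedron (so the $1$-row closures coincide with the LP), while the uniform aggregation $\lambda=\mathbf 1$ yields the CG cut $\sum_j x_j \ge 2$ that recovers the integer optimum, giving a ratio tending to $2$. The only difference is the specific instance --- you use the $0/1$ matrix with zero diagonal and right-hand side $1$, whereas the paper uses the matrix with $1$ on the diagonal, $2$ off-diagonal, and right-hand side $2$ --- so the paper obtains ratio $2-\tfrac{1}{n}$ versus your $2-\tfrac{2}{n}$, an immaterial distinction.
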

\begin{proof} Let $n = \textup{min}\{2,\lceil \frac{1}{\epsilon}\rceil\}$.
Consider the following instance 
\begin{eqnarray*}
\begin{array}{rl}
\textup{min}& \displaystyle \sum_{j = 1}^n x_j \\
\textup{s.t.}& x_i + \displaystyle \sum_{j \in [n]\setminus \{i\}} 2x_j \geq 2, \ \forall i \in [n], \\
&x_j \in \mathbb{Z}^n_{+}.
\end{array} \label{eq:LBAC}
\end{eqnarray*}

We show that $\frac{z^{\A}}{z^{1\A}} \geq 2 - \epsilon$ and $\frac{z^{\C}}{z^{1\C}} \geq 2 - \epsilon$ for this instance. 

\begin{enumerate}
\item $z^{1\A} = z^{1\C} = \frac{2n}{2n - 1}$: Observe that the set $\{x \in \mathbb{R}^n_{+}\,|\,  x_i + \sum_{j \in [n]\setminus \{i\}} 2x_j \geq 2\}$ is integral. Thus, $z^{1\A} = z^{1\C}$ and each is equal to the LP relaxation. Adding all these constraints we obtain 
\begin{eqnarray}\label{eq:preCG}
\sum_{j \in [n]} x_j \geq \frac{2n}{2n - 1}
\end{eqnarray}  
On the other hand, setting $x_j = \frac{2}{2n - 1}$, we obtain a feasible solution. Thus, $z^{1\A} = z^{1\C} = \frac{2n}{2n - 1}$.
\item $z^{\A} \geq 2$ and $z^{\C} \geq 2$: Since (\ref{eq:preCG}) is a valid inequality, we obtain the CG cut $\sum_{j \in [n]} x_j \geq 2$. Thus $z^{\C} \geq 2$ and since $z^{\A} \geq z^{\C}$  we obtain $z^{\A} \geq 2$.
\end{enumerate}
Thus, $\frac{z^{\A}}{z^{1\A}} \geq 2 - \frac{1}{n}$ and $\frac{z^{\C}}{z^{1C}} \geq 2 - \frac{1}{n}$; and our choice of $n$ completes the proof. 
\end{proof}

%###################################################
%###################################################

\subsubsection{Proof for CG closure}

We start by considering the case of covering polyhedra \emph{without bounds}.

\begin{proposition}
\label{prop:coverCGnobounds}
	Consider a covering polyhedron without bounds $P$ and a non-negative function $c \in \R^n_+$. Then $z^{1\C} \geq \frac{1}{2} z^{\C}$.
\end{proposition}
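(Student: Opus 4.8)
The goal is to show that for a covering polyhedron \emph{without bounds} $P = \{x \in \R^n_+ \mid Ax \ge b\}$, the 1-row CG closure is a $2$-approximation of the full CG closure. My plan is to mimic the structure of the packing proof: interpose the pre-processed LP (which, in the unbounded covering case, is just the LP obtained by replacing each $A_{ij}$ with $\min\{A_{ij}, b_i\}$) between the two closures. Concretely, I would establish the chain $z^{1\C} \ge z^{\text{pre-LP}} \ge \tfrac12 z^{\C}$, where $z^{\text{pre-LP}}$ denotes the optimal value of minimizing $c^\top x$ over the pre-processed LP. Note that since this is the covering/minimization setting, the approximation inequalities point the opposite way from the packing case, and set containments reverse accordingly: the pre-processed LP is \emph{contained in} the original LP relaxation, and $1\C(P)$ is contained in the pre-processed LP.

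\textbf{Step 1: the 1-row CG closure dominates the pre-processed LP.} For each constraint $A^i x \ge b_i$ and each coordinate $j$ with $A_{ij} > b_i$, I would exhibit a CG cut derived from row $i$ that is at least as strong as the ``capped'' inequality. The natural candidate is to scale row $i$ by $1/A_{ij}$ and round up: $\sum_k \lceil A_{ik}/A_{ij}\rceil x_k \ge \lceil b_i/A_{ij}\rceil = 1$ (using $0 < b_i < A_{ij}$), and then observe that over $\R^n_+$ this, together with the original constraint appropriately combined, dominates $\sum_k \min\{A_{ik}, b_i\} x_k \ge b_i$ after suitable scaling. This is the covering analogue of Proposition~\ref{prop:pack_1CandLP}; the bookkeeping is slightly fussier than in the packing case because capping each coefficient at $b_i$ is a coefficient-by-coefficient operation, so I may need to apply one CG cut per large coefficient and then take the conjunction. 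I expect this step to be routine but to require care to get the rounding directions right.

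\textbf{Step 2: the pre-processed LP is a $2$-approximation of the CG closure.} This is the heart of the argument and follows the template of Proposition~\ref{prop:pack2}. For each aggregation multiplier $\lambda \in \R^m_+$, let $P_\lambda = \{x \in \R^n_+ \mid \lambda^\top A x \ge \lambda^\top b\}$ be the single-constraint covering polyhedron. The key claim is that the pre-processed LP (which can only be tighter than $P_\lambda$'s own pre-processed version) is contained in $2\,(P_\lambda)^I + \R^n_+$, i.e., is a $2$-approximation of $(P_\lambda)^I$. For a single covering knapsack constraint $a^\top x \ge b_0$ with all $a_j \le b_0$, the standard fact (a one-line greedy/rounding-up argument, or a direct appeal to Theorem~\ref{thm:carr} with trivial upper bounds, or to Proposition~\ref{prop:packingDf}'s covering counterpart) is that the LP relaxation $2$-approximates the integer hull: take an LP optimum supported on one coordinate $j$, round it up to $\lceil b_0/a_j\rceil \le b_0/a_j + 1 \le 2 b_0/a_j$. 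Then I would intersect over all $\lambda$, using Proposition~\ref{prop:blowupCover} to pass to the ``$+\R^n_+$'' form, Observation~\ref{obs:bijection} to pull the scalar $\tfrac12$ out of the intersection, and Proposition~\ref{prop:commCover} to commute the intersection with adding $\R^n_+$ — this last step needs all the $(P_\lambda)^I$ to share the same (trivial, all-infinite) upper bounds, which holds here since there are no bounds. This yields the pre-processed LP $\subseteq 2\,\A(P) \subseteq 2\,\C(P)$ in the $+\R^n_+$ sense, hence $z^{\text{pre-LP}} \ge \tfrac12 z^{\C}$.

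\textbf{Main obstacle.} I anticipate the main subtlety is not any single inequality but the careful handling of the covering-with-no-bounds pre-processing in Step 2 and making sure the commutativity/blow-up machinery (Propositions~\ref{prop:blowupCover} and~\ref{prop:commCover}) applies cleanly — in particular, confirming that each $(P_\lambda)^I$ really is a covering polyhedron with the same (empty) bounds so that intersections and $+\R^n_+$ commute, and confirming that the pre-processed LP is genuinely contained in each $(P_\lambda)$'s pre-processed relaxation so that Claim~1's inclusion can be chained. Once those structural checks are in place, the approximation constant $2$ falls out of the single-coordinate rounding exactly as in the packing case.
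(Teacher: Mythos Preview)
Your Step~2 is essentially correct and is exactly how the paper handles the \emph{aggregation} closure (Proposition~\ref{prop:KCvsA}). The genuine gap is in Step~1: the 1-row CG closure is \emph{not} in general contained in the pre-processed LP. The capped inequality $\sum_j \min\{A_{ij},b_i\}\,x_j \ge b_i$ is valid for the integer hull of row~$i$, but it need not be a rank-1 CG cut (nor dominated by CG cuts) for that row. A concrete counterexample: take the single row $5x_1+x_2\ge 3$, $x\ge 0$. The capped inequality is $3x_1+x_2\ge 3$. The strongest CG cut in direction $(3,1)$ is $3x_1+x_2\ge\lceil 9/5\rceil=2$, and one checks that the point $(\tfrac34,\tfrac12)$ satisfies every CG cut of this polyhedron yet violates $3x_1+x_2\ge 3$. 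Hence $1\C(P)=\C(P)\not\subseteq \text{pre-LP}$, and the chain $z^{1\C}\ge z^{\text{pre-LP}}\ge\tfrac12 z^{\C}$ breaks at the first link. This is precisely why the paper remarks that the covering CG case ``requires a different and much more involved proof'' and why Figure~\ref{fig:coverProofSch} has no arrow from $z^{1\C}$ to $z^{KC}$.

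The paper's proof abandons the pre-processed-LP intermediary entirely and argues directly at the level of individual CG cuts. Given any CG cut $(a')^\top x\ge\beta'$ for $P$ (with integral $a',\beta'$), it shows that $(a')^\top x\ge\beta'/2$ is already valid for $1\C(P)$, via a case split on $\beta'$: if $\beta'\ge 2$ then $(a')^\top x>\beta'-1\ge\beta'/2$ is valid for $P$ itself; if $\beta'=1$, pick any row $i$ with $\lambda_i b_i>0$ in the aggregation producing the cut, and the 1-row CG cut $\sum_j\lceil\lambda_iA_{ij}\rceil x_j\ge\lceil\lambda_ib_i\rceil=1$ already dominates $(a')^\top x\ge 1$. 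LP duality then gives $z^{1\C}\ge\tfrac12 z^{\C}$. The moral is that for CG in covering, the ``factor $2$'' comes from this integer-RHS dichotomy, not from a rounding/pre-processing argument.
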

\begin{proof}
Consider a covering polyhedron $P = \{x \in \R^n_+ \,|\, Ax \ge b, \ x \ge 0\}$. Let $\C(P) =\{x \,|\, A'x \ge b'\}$ be the CG closure of $P$ (i.e., CG closure is a rational polyhedron~\cite{schrijver1980cutting}).
Without loss of generality we assume that the entries of $A',b'$ are non-negative integers and each CG cut is obtained by rounding up the entries of the constraint $\lambda^\top Ax \ge \lambda^\top b$ for some $\lambda \in \mathbb{R}_+^m$.
%The polyhedron $P^C$ is also of covering type, thus it is full-dimensional.
%As a consequence we can assume that each inequality $a'x \ge \beta'$ of the system $A'x \ge b'$ defines a facet of $P^{C}$.
%Finally, since $P^{C}$ is obtained as the CG closure of $P$, we can further assume that the entries of each row $a'$ of $A'$ have GCD one
%(this can be seen as follows: $a'x \ge \beta'$ is either an inequality of $Ax \ge b$, $x \ge 0$, or a CG cut for $P$. In the second case the definition of CG cut implies that we can assume $GCD(a')=1$; in the first case, if $GCD(a')=k \ge 2$ and $\beta'$ is not divisible by $k$, we have that $(a'/k)x \ge \lceil{\beta'/k}\rceil$ is a CG cut for $P$ that contradicts the fact that $a'x \ge \beta'$ defines a facet of $P^C$).
Let $(a')^{\top}x \ge \beta'$ be an inequality of the system $A'x \ge b'$.
We show that $(a')^{\top}x \ge \beta'/2$ is a 1-row CG cut for $P$.
The theorem then follows by linear programming duality.
%Theorem 1.3 in [Basu, Bonami, Cornu\'ejols, Margot: On the relative strength of split, triangle and quadrilateral cuts, Math. Program., Ser. A (2011) 126:281--314].

If inequality $(a')^{\top}x \ge \beta'$ is one inequality of the original system $Ax \ge b, \ x \ge 0$ we are done, thus we assume that $a'x \ge \beta'$ is a non-trivial CG inequality for $Ax \ge b, \ x \ge 0$.
This in particular implies $\beta' \ge 1$.
The strict inequality $a'x > \beta' -1$ is valid for $P$.
If $\beta' \ge 2$, then $\beta'-1 \ge \beta'/2$, thus $(a')^{\top}x \ge \beta'/2$ is valid for $P$ and so it is trivially a 1-row CG cut for $P$. Thus we now assume $\beta' =1$.

Let $\lambda \in \R^m_+$ be the vector of multipliers corresponding to the CG cut $(a')^{\top}x \ge \beta'$, i.e., $a' = \lceil \lambda^{\top} A\rceil $, and $\beta' = \lceil \lambda^\top b \rceil$.
Since $\lambda^{\top} b > 0$, there exists $i\in [m]$ with $\lambda_i b_i > 0$.
Then $(a')^{\top}x \ge \beta'$ is implied by the 1-row CG cut $\sum_{j = 1}^n\lceil \lambda_i A_{ij}\rceil x_j \ge \lceil\lambda_i b_i\rceil = 1$ because $\lceil \lambda^\top A \rceil \ge \lceil{\lambda_i A^i}\rceil$.
\end{proof}

\begin{proposition}
\label{prop:coverCGwithbounds}
	For a covering polyhedron with bounds and a non-negative function $c \in \R^n_+$, we have $z^{1\C} \geq \frac{1}{2} z^{\C}$.
\end{proposition}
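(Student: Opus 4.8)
The plan is to reduce the bounded case to the unbounded case already handled in Proposition~\ref{prop:coverCGnobounds}, by using the structural results from Section~\ref{subsec:CoveringProperties} to ``absorb'' the upper bounds into covering inequalities. Concretely, given $Q = \{x \in \R^n_+ \mid Ax \ge b, \ x \le u\}$, I would first invoke Proposition~\ref{prop:coverUpward}: the set $Q + \R^n_+$ is a covering polyhedron \emph{without} bounds, and each of its covering inequalities is a conic combination of one covering inequality $A^i x \ge b_i$ of $Q$ with a subset of the bound constraints $x_j \le u_j$. The key observation is that since we only care about the optimal value with respect to a non-negative objective $c$, and $Q^I = Q^I + \R^n_+$ for the purposes of minimizing $c^\top x$ (more precisely $z^I$ over $Q$ equals $z$ over $Q + \R^n_+$, and similarly for all the closures, by Proposition~\ref{prop:blowupCover} applied to $\alpha = 1$), it suffices to prove the statement for $P := Q + \R^n_+$.

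Next I would relate $\C(Q)$ to $\C(P)$ and $1\C(Q)$ to $1\C(P)$. For the full CG closure, $\C(P) \supseteq \C(Q)$ trivially since $P \supseteq Q$ so every CG cut for $P$ is valid for $Q$ — hence $z^{\C}$ over $Q$ is at most $z^{\C}$ over $P$; but I actually want the reverse direction for the objective bound, so I should instead argue $z^{\C}(Q) \le z^{\C}(P)$ is the wrong direction and carefully track: since we are \emph{covering}, $P \supseteq Q$ means the min over $\C(P)$ is $\le$ min over $\C(Q)$, i.e. $z^{\C}(P) \le z^{\C}(Q)$. So to upper-bound $z^{\C}(Q)$ in terms of something I can control, I instead need to show $1\C(Q) \supseteq$ (a $2$-approximation of $\C(Q)$), which I get if $z^{1\C}(Q) \ge \tfrac12 z^{\C}(Q)$. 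The cleanest route: show $z^{\C}(Q) \le 2 \cdot z^{1\C}(Q)$ by the chain $z^{\C}(Q) \le z^{\C}(P)$ is false; rather use $z^{\C}(Q) \le 2 z^{KC}(Q) \le \dots$ — but the excerpt explicitly warns $z^{\C}$ is \emph{not} comparable to $z^{KC}$. Therefore the honest route is: apply Proposition~\ref{prop:coverCGnobounds} to $P$ to get $z^{1\C}(P) \ge \tfrac12 z^{\C}(P)$, then show (a) $z^{\C}(Q) \le z^{\C}(P)$ — false direction again. The resolution is that for covering sets $U \supseteq V$ gives $z(U) \le z(V)$, and here $\C(P) \supseteq \C(Q)$ as sets? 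No: a CG cut valid for $P$ need not be valid for $Q$'s relaxation structure... actually it \emph{is} valid for $Q$ since $Q \subseteq P$. So $\C(Q) \subseteq \C(P)$ fails because $Q$ has \emph{more} original inequalities (the bounds). Hence $\C(Q) \subseteq \C(P)$ need not hold. The correct containment is $\C(Q) \subseteq \C(P)$ precisely when every facet of $\C(P)$ is a CG cut for $Q$, which holds since $P \supseteq Q$. So $\C(Q) \subseteq \C(P)$, giving $z^{\C}(Q) \ge z^{\C}(P)$ for the covering minimization. That is the direction I need: $z^{\C}(Q) \ge z^{\C}(P) \ge$ ... no. I think the clean statement is: $z^{\C}(Q) = z^{\C}(P)$ is what I should aim to prove, using that the \emph{non-trivial} CG cuts of $Q$ with a non-negative objective can be taken among those of $P$ after absorbing bounds; I expect this to require re-examining the multiplier argument of Proposition~\ref{prop:coverCGnobounds} directly on $Q$.

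Given the above tangle, the robust plan is to \textbf{redo the proof of Proposition~\ref{prop:coverCGnobounds} directly for $Q$ with bounds}, imitating its structure: take a facet $(a')^\top x \ge \beta'$ of $\C(Q)$ obtained by rounding up some $\lambda^\top A' x \ge \lambda^\top b'$ where $A'x \ge b', x \le u, x \ge 0$ is the full system; split into the case $\beta' \ge 2$ (where $a' x > \beta' - 1 \ge \beta'/2$ is already valid for $Q$ itself, hence a trivial $1$-row CG cut after using just the relaxation) and the case $\beta' = 1$. In the case $\beta' = 1$, since $\lambda^\top b' > 0$ there is a single original covering row $A^i$ (the bound rows and non-negativity contribute non-positively to the right-hand side after the absorption in Proposition~\ref{prop:coverUpward}, so the positive part of $\lambda^\top b'$ comes from a genuine covering inequality $A^i x \ge b_i$ or one of its knapsack-cover-style conic combinations with the bounds), and the single-row CG cut from that row rounds up to something dominating $(a')^\top x \ge 1$. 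The main obstacle — and the reason this proposition ``requires a different and much more involved proof'' per the surrounding text — is handling the interaction of the rounding operation with the upper bound constraints: when the aggregation multiplier puts weight on bound rows $x_j \le u_j$, the rounded right-hand side $\lceil \lambda^\top b' \rceil$ mixes $+\lambda_i b_i$ terms with $-\lambda_j u_j$ terms, and one must show that a \emph{single} covering row (possibly after adding back the relevant bounds, i.e. a KC-type inequality which is itself a $1$-row object) already certifies the cut. I expect to need Proposition~\ref{prop:coverUpward}'s explicit conic-combination description here, plus a minimal-index/greedy argument like the one in its proof to peel off exactly enough bound weight, and this is where the real work lies.
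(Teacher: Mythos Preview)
Your first instinct—reduce to the unbounded case via $\bar P := Q + \R^n_+$ and invoke Proposition~\ref{prop:coverCGnobounds}—is exactly the paper's approach, and you should not have abandoned it. The direction-of-containment tangle is resolved by the sandwich
\[
\frac{z^{\C}(Q)}{z^{1\C}(Q)} \;\le\; \frac{z^{\C}(\bar P)}{z^{1\C}(\bar P)} \;\le\; 2,
\]
which requires two separate comparisons: (i) $z^{1\C}(Q) \ge z^{1\C}(\bar P)$, and (ii) $z^{\C}(Q) \le z^{\C}(\bar P)$. Part (i) is the easy one: by Proposition~\ref{prop:coverUpward} every covering row of $\bar P$ is a conic combination of \emph{one} covering row of $Q$ with some bounds $x_j \le u_j$, so the single-row-plus-bounds CG closure that defines $1\C(Q)$ is contained in each single-row CG closure defining $1\C(\bar P)$; hence $1\C(Q) \subseteq 1\C(\bar P)$.

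Part (ii) is the hard step and is where your proposal has a real gap. Since $c \ge 0$, it suffices to show the set containment $\C(Q) + \R^n_+ \supseteq \C(\bar P)$; the paper proves this one coordinate at a time via
\[
\C(Q) + \cone(e_j) \;\supseteq\; \C\bigl(Q + \cone(e_j)\bigr).
\]
The key trick—which you do not have—is that any CG cut $a^\top x \ge \lceil \beta \rceil$ for $Q$ with $a_j < 0$ can be replaced, \emph{without loss}, by one with $a_j = 0$: add $-a_j$ copies of the bound $-x_j \ge -u_j$ to the base inequality, obtaining a valid $(a')^\top x \ge \beta - a_j u_j$ with $a'_j = 0$, and observe that $\lceil \beta - a_j u_j \rceil = \lceil \beta \rceil - a_j u_j$ because $u_j \in \Z$. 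This yields a description of $\C(Q)$ using only inequalities with $a_j \ge 0$ together with $x_j \le u_j$, from which the displayed containment follows. Your fallback ``direct'' plan runs into precisely this interaction of rounding with bound multipliers; the integrality of $u_j$ is the missing ingredient, and the one-coordinate-at-a-time formulation is what makes it clean rather than the greedy peeling you gesture at.
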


\begin{proof}
For a covering polyhedron with bounds $P$, let $\bar P = P + \mathbb{R}^n_+$.
By applying Proposition~\ref{prop:coverUpward} recursively, $\bar P$ is a covering polyhedron with bounds.
Moreover, each covering inequality of $\bar P$ is a conic combination of one covering inequality of $P$ with the bounds $x \le u$.

We will argue bounds on the ratio between 
$$z^{\C} = \min \{c^{\top}x : x \in \C(P)\} \quad \text{and} \quad z^{1\C} = \min \{c^{\top}x : x \in 1\C(P)\}$$
by using known bounds on the ratio between covering problems 
$$\bar z^{\C} = \min \{ c^{\top}x : x \in \C(\bar P)\} \quad \text{and} \quad \bar z^{1\C} = \min \{ c^{\top}x : x \in 1\C(\bar P)\}.$$

We will show $z^{\C} \le \bar z^{\C}$ and $z^{1\C} \ge \bar z^{1\C}$.
Together with a bound of $2$ on the ratio for covering problems from Proposition \ref{prop:coverCGnobounds}, this implies the same bound on the ratio for covering problems with bounds:
\begin{align*}
%\label{eq_bound}
\frac{z^{\C}}{z^{1\C}} \le \frac{\bar z^{\C}}{\bar z^{1\C}} \le 2.
\end{align*}

%\red{M: redo numbering of claims}

\paragraph{Claim 1} 
\label{claim_k1} $z^{1\C} \ge \bar z^{1\C}$. 
\\ \\ \emph{Proof.} %\begin{proof}
We only need to show that 
\begin{align}
\label{claim_k_cont}
1\C(P) \subseteq  1\C(\bar P).
\end{align}
since the relation \eqref{claim_k_cont} directly implies
\begin{align*}
z^{1\C} = \min \{ c^{\top}x : x \in 1\C(P) \} \ge  \min \{ c^{\top}x : x \in  1\C(\bar P) \} = \bar z^{1\C}.
\end{align*}
By Proposition \ref{prop:coverUpward}, every constraint of $\bar{P}$ is a conic combination of a single covering constraint and one bound constraint. Therefore it follows from the definition of $1\C(P)$ that $1\C(P) \subseteq 1\C(\bar P)$. This shows \eqref{claim_k_cont}.~$\diamond$
   %\end{proof}
\paragraph{Claim 2} 
\label{claim_k2}
We have $z^{\C} \le \bar z^{\C}$. 
\\ \\ \emph{Proof.} %\begin{proof}
Since $c \ge 0$, to prove $z^{\C} \le \bar z^{\C}$ it is sufficient to show that 
\begin{align}
\label{claim_k_cont2}
\C(P) + \mathbb{R}^n_+ \supseteq  \C(\bar P).
\end{align}
In fact, relation \eqref{claim_k_cont2} directly implies
\begin{align*}
z^{\C} = \min \{ c^{\top}x : x \in \C(P) \} = \min \{ c^{\top}x : x \in \C(P)+ \mathbb{R}^n_+  \} \le  \min \{ c^{\top}x : x \in  \C(\bar P) \} = \bar z^{\C}.
\end{align*}
In order to prove relation \eqref{claim_k_cont2}, we prove that 
\begin{align}
\label{this}
\C(P) + \textup{cone}(e_j) \supseteq \C(P + \textup{cone}(e_j)).
\end{align}
In fact, by Proposition \ref{prop:coverUpward}, $P + \textup{cone}(e_j)$ is also a covering polyhedron with bounds.
Therefore we can apply relation \eqref{this} recursively (for example, for $j \neq j'$, we have $\C(P) + \textup{cone}(e_j) + \textup{cone}(e_{j'})\supseteq \C(P + \textup{cone}(e_j))+ \textup{cone}(e_{j'}) \supseteq \C(P + \textup{cone}(e_j) + \textup{cone}(e_{j'}))$), and we obtain $\C(P) + \mathbb{R}^n_+ \supseteq \C(P + \mathbb{R}^n_+) =  \C(\bar P)$, thus \eqref{claim_k_cont2}.

If $P = P + \textup{cone}(e_j)$, then \eqref{this} follows easily, therefore we now assume that $u_j$ is finite, and therefore by assumption integral. By definition,
$$\C(P) = P \cap \{x: a^{\top}x \ge \lceil{\beta}\rceil, \text{ where } a^{\top}x \ge \beta \text{ valid for } P, \ a \in \mathbb{Z}^n\}.$$

We show that all inequalities with $a_j < 0$ can be dropped from such definition. More precisely:
$$\C(P) = P \cap \{x: a^{\top}x \ge \lceil{\beta}\rceil, \text{ where } a^{\top}x \ge \beta \text{ valid for } P, \ a \in \mathbb{Z}^n, \ a_j \ge 0\}.$$
Let $a^{\top}x \ge \beta$ be valid for $P$, with  $a \in \mathbb{Z}^n$ and $a_j < 0$.
Now consider the inequality $(a')^{\top}x \ge \beta'$ obtained as the sum of $ax \ge \beta$ and $-a_j x_j \ge -a_j u_j$.
Note that $a' \in \mathbb{Z}^n$ and $a'_j = 0$.
%Without loss of generality we assume that $a' \in \mathbb{Z}^n$ with $\gcd(a')=1$.
We next verify that $(a')^{\top}x \ge \beta'$ is valid for $P$. In particular, if $\hat{x}:= (\hat{x}_j, \hat{x}_{-}) \in P$ (here the subscript $\textup{ }_{-}$ denotes all components other than $j$), then $(u_j, \hat{x}_{-}) \in P$ and therefore, $a^{\top}_{-}\hat{x}_{-} + a_ju_j \geq \beta $. Equivalently, $a^{\top}_{-}\hat{x}_{-} \geq \beta - a_j u_j$ or $(a')^{\top}\hat{x} = a^{\top}_{-}\hat{x}_{-} \geq \beta - a_j u_j = \beta'$.

Moreover, note that $(a')^{\top}x \ge \lceil{\beta'}\rceil$ cuts from $P$ at least all the points cut by $(a)^{\top}x \ge \lceil\beta \rceil$. To see this, suppose $\hat{x}:= (\hat{x}_j, \hat{x}_{-}) \in P$ is separated by $(a)^{\top}x \ge \lceil\beta \rceil$. Then
$a^{\top}_{-}\hat{x}_{-} + a_ju_j \leq a^{\top}_{-}\hat{x}_{-} + a_j x_j <\lceil\beta \rceil$, since $a_j \leq 0$ and $\hat{x}_j \in P$. Equivalently, $(a')^{\top}\hat{x} = a^{\top}_{-}\hat{x}_{-} < \lceil\beta \rceil - a_ju_j = \lceil \beta - a_ju_j \rceil = \lceil{\beta'}\rceil$, since $a_j u_j \in \mathbb{Z}$.
%\textbf{\textcolor{red}{expand here?}}

Therefore
\begin{align*}
\C(P) & = P \cap \{x\,|\, ax \ge \lceil{\beta}\rceil, \text{ where } ax \ge \beta \text{ valid for } P, \ a \in \mathbb{Z}^n, \ a_j \ge 0\} \\
& = P \cap \{x\,|\, ax \ge \lceil{\beta}\rceil, \text{ where } ax \ge \beta \text{ valid for } P + \textup{cone}(e_j), \ a \in \mathbb{Z}^n\} \\
& = P \cap \C(P + \textup{cone}(e_j)) \\
& = \{x \,|\, x_j \le u_j\} \cap \C(P + \textup{cone}(e_j)),
\end{align*}
where the last equation follows from the fact that if $y \in (P + \textup{conv}(x_j)) \cap  \{x \,|\, x_j \le u_j\}$, then $y \in P$. Thus we obtain 
$$\C(P) + \textup{cone}(e_j) = \left(\{x : x_j \le u_j\} \cap \C(P + \textup{cone}(e_j))\right)+ \textup{cone}(e_j).$$

Finally, we show that 
\begin{eqnarray}
\left(\{x : x_j \le u_j\} \cap \C(P + \textup{cone}(e_j))\right)+ \textup{cone}(e_j)\supseteq \C(P + \textup{cone}(e_j)),
\end{eqnarray}
to complete the proof.

First we verify that if $\hat{x} := (\hat{x}_{-}, \hat{x}_j) \in \C(P + \textup{cone}(e_j))$ and $\hat{x}_j \geq u_j$, then $(\hat{x}_{-}, u_j) \in \C(P + \textup{cone}(e_j))$. Assume by contradiction that $c_{-}^{\top}{x}_{-} + c_j {x}_j \geq \delta$ be a valid inequality for $P + \textup{cone}(e_j)$ with $c \in \mathbb{Z}^n$ such that $c_{-}^{\top}\hat{x}_{-} + c_j u_j  < \lceil\delta \rceil$. We will show that the point $(\hat{x}_{-}, \hat{x}_j)$ also does not belong to $\C(P + \textup{cone}(e_j))$ to obtain a contradiction. Note first that $c_{-}^{\top}{x}_{-} + c_j {x}_j \geq \delta$ is a valid inequality for $P$ with $c_j \geq 0$. Therefore $c_{-}^{\top}{x}_{-} \geq \delta - c_j u_j$ is a valid inequality for $P$. However since the $j^{\text{th}}$ component of $c':= (c_{-}, 0)$ is non-negative, we have that  $c_{-}^{\top}{x}_{-} \geq \delta - c_j u_j$ is a valid inequality for $P + \textup{cone}(e_j)$. In other words,  $c_{-}^{\top}{x}_{-} \geq \lceil \delta - c_j u_j \rceil = \lceil\delta\rceil - c_ju_j$ is a CG inequality for $P + \textup{cone}(e_j)$. However note that this CG inequality separates the point $(\hat{x}_{-}, \hat{x}_j)$.

Now let $\hat{x} := (\hat{x}_{-}, \hat{x}_j) \in \C(P + \textup{cone}(e_j))$. If $\hat{x}_j \leq u_j$, then clearly $\hat{x} \in (\{x : x_j \le u_j\} \cap \C(P + \textup{cone}(e_j)))+ \textup{cone}(e_j)$. In $\hat{x}_j \geq u_j$, then based on the above discussion $(\hat{x}_{-}, u_j) \in \C(P + \textup{cone}(e_j))$. In other words, $(\hat{x}_{-}, u_j) \in \left(\{x : x_j \le u_j\} \cap \C(P + \textup{cone}(e_j))\right)$. Thus 
$$\hat{x} = (\hat{x}_{-}, u_j) + (0, x_j - u_j) \in \left(\{x : x_j \le u_j\} \cap \C(P + \textup{cone}(e_j))\right)+ \textup{cone}(e_j),$$ completing the proof. 

Therefore we have proven the claim by showing \eqref{this}.
~$\diamond$
\\  \\ %\end{proof}
This concludes the proof of Proposition \ref{prop:coverCGwithbounds}.
%\hfill $\Box$
\end{proof}
%######################################################
\paragraph{Tight examples.} We need to show that for there is an instance where $\frac{z^{\C}}{z^{1\C}} \ge 2-\e$. But the proof of Proposition \ref{prop:LBAC} already shows that this happens for the instance given by \eqref{eq:LBAC}. 

	We next show that $z^{\C}$ can be arbitrarily bad in comparison to $z^{KC}$.
	%[Move this somewhere else?]
	%Since CG cuts do not give KC inequalities, we show that they actually cannot approximate the 1-row-aggregated cuts in this covering setting with bounds.
	
	\begin{proposition}	
	\label{prop:coverCGvsKC}
	$z^{\C}$ can be arbitrarily bad in comparison to $z^{KC}$ for 0-1 covering problems.
	\end{proposition}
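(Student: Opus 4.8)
The plan is to build, for each integer $L\ge 1$, a $0/1$ covering instance $Q_L$ together with a non-negative objective $c$ for which $z^{KC}\ge L\cdot z^{\C}$; sending $L\to\infty$ then yields the claim. The argument naturally splits into a lower bound on $z^{KC}$ and an upper bound on $z^{\C}$, and the entire difficulty is making these two estimates hold \emph{simultaneously}: $Q_L$ must be chosen so that some aggregation already exposes a strong knapsack-cover structure while no single round of Chv\'atal--Gomory rounding can take advantage of it.

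For the lower bound on $z^{KC}$ I would fix one aggregation multiplier $\lambda^*\in\R_+^m$ and look at the single-row covering-with-bounds relaxation $Q_{\lambda^*}=\{x\in\R^n_+\mid (\lambda^*)^\top Ax\ge(\lambda^*)^\top b,\ x\le\mathbf 1\}$. The inclusion $KC(Q_L)\subseteq KC(Q_{\lambda^*})$ --- which is exactly what is proved inside the proof of Proposition~\ref{prop:KCvsA} --- gives $z^{KC}\ge z^{KC}(Q_{\lambda^*})$, and Theorem~\ref{thm:carr} gives $z^{KC}(Q_{\lambda^*})\ge\tfrac12\,z^I(Q_{\lambda^*})$. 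So it is enough to design $Q_L$ and $\lambda^*$ so that the aggregated constraint is a genuinely ``hard'' knapsack-cover, forcing $z^I(Q_{\lambda^*})=\Omega(L\cdot z^{LP})$ --- that is, so that covering $(\lambda^*)^\top b$ with $0/1$ settings of the vector $(\lambda^*)^\top A$ costs much more than the fractional optimum of $Q_L$.

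For the upper bound on $z^{\C}$ I would exhibit an explicit point $\bar x$ (an optimal vertex of the LP relaxation of $Q_L$, or a slight perturbation of it toward the interior) with $c^\top\bar x=O(z^{LP})$, and show $\bar x\in\C(Q_L)$. Membership in the CG closure means: for every non-negative combination of the covering rows $Ax\ge b$, the bound rows $x\le\mathbf 1$ and the nonnegativity rows $x\ge 0$ producing a valid inequality $\pi^\top x\ge\pi_0$ with integral $\pi$, one has $\pi^\top\bar x\ge\lceil\pi_0\rceil$. I would establish this with a slack argument, placing $\bar x$ so that along every such $\pi$ the quantity $\pi^\top\bar x$ already exceeds $\pi_0$ by a full unit, so that rounding $\pi_0$ up cannot separate $\bar x$. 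The whole point of the construction is to neutralize the standard one-line strengthening ``scale a constraint by a tiny $1/M$ and round up'' --- which on toy examples with one dominating variable instantly produces a strong cut such as $x_j\ge 1$ --- and this forces $Q_L$ to have a truly combinatorial shape in which no variable carries most of the covering weight. Putting the two bounds together gives $z^{KC}/z^{\C}=\Omega(L)\to\infty$.

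The hard part, as is typical when one must control an entire Chv\'atal--Gomory closure, is the second step: bounding \emph{all} rank-$1$ CG cuts of the full system at once. Because CG cuts are free to aggregate the covering constraints \emph{together with} the bounds $x\le\mathbf 1$ and $x\ge 0$, and it is precisely this interaction that makes the CG closure of small covering polyhedra strong, $Q_L$ must be engineered so that this interaction provably fails to help, and verifying that the witness point $\bar x$ survives every rounding is the technical core of the proof. One should also expect the construction to require several constraints with real synergy --- for a single covering knapsack the relevant gap tends to collapse already after one round --- so that it is an aggregated combination, rather than any original row, that carries the strong cut certified on the KC side.
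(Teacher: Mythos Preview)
Your proposal is a strategy outline, not a proof: you never actually construct $Q_L$, and the entire content of this proposition lies in the construction. Saying you will ``design $Q_L$ and $\lambda^*$ so that the aggregated constraint is a genuinely hard knapsack-cover'' and then ``exhibit an explicit point $\bar x$'' and ``establish this with a slack argument'' does not discharge the obligation; every one of those steps is where the work is. As written, nothing has been proved.

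For comparison, the paper does not follow your plan at all. It writes down a single explicit one-row instance,
\[
\min\ x_n \quad\text{s.t.}\quad x_1+\cdots+x_{n-1}+n\,x_n\ \ge\ n,\qquad x\in\{0,1\}^n,
\]
argues that $z^{KC}=1$ (the KC inequality with $S=\{1,\dots,n-1\}$ is exactly $x_n\ge 1$), and asserts that the only nontrivial CG cut is $\sum_i x_i\ge 1$, so that $z^{\C}=1/n$. There is no aggregation, no appeal to Theorem~\ref{thm:carr}, no slack argument over all rank-1 cuts --- just an explicit instance and a two-line computation.

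That said, your instinct in the last paragraph is sharper than you give it credit for. The paper's instance has one dominating variable $x_n$, and exactly the maneuver you warn about --- combining the covering row with the upper bounds $x_i\le 1$ for $i<n$ and then rounding --- produces the valid inequality $x_n\ge 1/n$ with integral normal $e_n$, hence the CG cut $x_n\ge 1$. So the paper's asserted value $z^{\C}=1/n$ does not survive a careful check, and your remark that ``for a single covering knapsack the relevant gap tends to collapse already after one round'' is the right diagnosis. The real difference between you and the paper is that the paper commits to a concrete instance (even if its CG verification is too casual), whereas you correctly anticipate the obstacle but stop before producing one; neither, as it stands, is a complete proof.
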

	\begin{proof}
	
	Consider the problem 
	\begin{align*}
		\min ~&x_n\\
		st ~& x_1 + \ldots x_{n-1} + n x_n \ge n\\
			&x \in \{0,1\}^n.
	\end{align*}
	It is straightforward to verify that the CG closure of this problem should be obtained by just adding the inequality $\lceil 1/n \rceil x_1 + \ldots \lceil 1/n \rceil x_{n-1} + x_n \ge 1 \equiv \sum_i x_i \ge 1$. So optimizing over the CG closure gives value $1/n$. %$\pprox 1/n$. 
	But the 1-row-aggregated closure gives the integer hull, so optimizing over it gives value~$1$.
\end{proof}

%############################################################
%############################################################
%############################################################
%############################################################

\subsection{Proof of Theorem \ref{thm:rankCover}}
\label{subsec:CoveringRankProof}
We use the following result on bounds of integrality gap of covering IPs as a function of the number of constraints.
\begin{theorem}[\cite{vazirani2013approximation}]
\label{thm:vazirani}
Consider a covering IP of the following form: $\min \{ c^\top x \mid Dx \geq f,~ x \in \Z^n_+\}$, where $D \in \R_+^{k \times n}$ such that $D_{ij} \leq f_i$ for all $i \in[k],~ j\in [n]$, and $c \in \R^n_+$. Then\footnote{The constant 8 can be easily verified using the proof techniques in \cite{vazirani2013approximation}.}, $z^I \leq 8 \log_2 (2k) z^{LP}$.
\end{theorem}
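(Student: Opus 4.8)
The plan is to prove this integrality-gap bound by the probabilistic method, via \emph{randomized rounding} of an optimal LP solution. First I would reduce to a normalized form: since $D_{ij}\le f_i$ and $D\ge 0$, any row with $f_i = 0$ is implied by $x \ge 0$ and can be discarded, and dividing each remaining row by $f_i > 0$ we may assume $f_i = 1$ and $D_{ij}\in[0,1]$ for all $i,j$, so the constraints become $Dx \ge \mathbf 1$. The degenerate cases are disposed of directly: if the LP is infeasible then $z^I = z^{LP} = +\infty$ and there is nothing to prove, while if $z^{LP} = 0$ then an optimal LP point $x^*$ satisfies $c_j x^*_j = 0$ for all $j$, so rounding $x^*$ up coordinatewise yields an integral feasible point of cost $0 = z^{LP}$. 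Hence I would assume $0 < z^{LP} < \infty$ and fix an optimal LP solution $x^*$.

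The core step is the rounding itself. I would set $\alpha = \Theta(\log_2(2k))$ and scale to $\hat x = \alpha\, x^*$, so that every constraint now has value $D^i \hat x = \alpha\, D^i x^* \ge \alpha$. I would then round each coordinate independently: writing $\hat x_j = \lfloor \hat x_j\rfloor + \theta_j$ with $\theta_j \in [0,1)$, set $\bar x_j = \lfloor \hat x_j\rfloor + B_j$, where $B_j$ is an independent Bernoulli with $\Pr[B_j = 1] = \theta_j$. This produces $\bar x \in \Z^n_+$ with $\mathbb{E}[\bar x_j] = \hat x_j$, so by linearity $\mathbb{E}[c^\top \bar x] = \alpha\, c^\top x^* = \alpha\, z^{LP}$. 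For feasibility, fix a constraint $i$ and split $D^i \bar x = L_i + R_i$ into the deterministic floor part $L_i = \sum_j D_{ij}\lfloor \hat x_j\rfloor$ and the random part $R_i = \sum_j D_{ij} B_j$. The constraint can only be violated when $L_i < 1$, in which case $\mathbb{E}[R_i] = \alpha\, D^i x^* - L_i \ge \alpha - 1$; and since $R_i$ is a sum of independent random variables each lying in $[0,1]$ (here $D_{ij}\le 1$ is crucial), a lower-tail Chernoff bound gives $\Pr[D^i\bar x < 1] \le \Pr[R_i < 1] \le e^{-c\alpha}$ for an absolute constant $c>0$. Choosing $\alpha$ so that $e^{-c\alpha} \le \tfrac{1}{4k}$ and applying a union bound over the $k$ constraints yields $\Pr[\bar x \text{ infeasible}] \le \tfrac14$.

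Finally I would combine cost and feasibility by a union bound: by Markov's inequality $\Pr[c^\top \bar x > 2\alpha z^{LP}] \le \tfrac12$, so the probability that $\bar x$ is either infeasible or has cost exceeding $2\alpha z^{LP}$ is at most $\tfrac14 + \tfrac12 < 1$. Hence there exists an integral feasible point of cost at most $2\alpha z^{LP}$, giving $z^I \le 2\alpha z^{LP} = O(\log_2(2k))\, z^{LP}$. The main obstacle — and essentially the only place where effort is needed — is the quantitative bookkeeping: one must pin down the Chernoff constant $c$ and the choice of $\alpha$, together with the two-sided union-bound split, so that the final multiplier comes out to the claimed $8\log_2(2k)$; the generous slack in the constant $8$ makes this routine, which is exactly what the footnote signals. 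A secondary point to handle with care is that the variables are general non-negative integers rather than $0/1$, but this is precisely why I split the scaled solution into its deterministic floor and a $\{0,1\}$-valued random remainder $R_i$, which guarantees that the Chernoff bound applies to a sum of $[0,1]$-bounded terms regardless of how large the $x^*_j$ are.
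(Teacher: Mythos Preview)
The paper does not give its own proof of this theorem: it is quoted as a known result from \cite{vazirani2013approximation}, with only a footnote remarking that the constant $8$ ``can be easily verified using the proof techniques'' there. Your randomized-rounding argument is exactly that standard technique --- scale an optimal LP solution by $\Theta(\log k)$, round fractional parts independently, control each covering constraint by a lower-tail Chernoff bound (using $D_{ij}\le 1$ after normalization), and finish with Markov plus a union bound --- and it is correct as sketched; the only remaining work, as you note, is the routine calibration of constants to land on $8\log_2(2k)$.
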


\paragraph{Lower bound on rank.} We will prove Theorem \ref{thm:rankCover} for a more general non-empty covering \emph{set} $Q = \{x \in \R^n_+ \mid \Ai x \geq b_i,~i \in \mathcal{I} \}$, where $\mathcal{I}$ is an arbitrary index set and $0 \leq A^i_j \leq b_i$ for all $i \in \mathcal{I}$. We will call a covering set with these properties a \emph{well-behaved covering set}. 

	Given a matrix $(D,f) \in \R^{k \times n} \times \R^k$, we say that it is \emph{\kvi \ for $Q$} if $(D,f)$ is non-negative and the $k$ inequalities $D^i x \ge f_i$ are valid for $Q$. 
	%Note that in this case $D_{ij} \geq 0$, since the recession cone of $T$ contains $\R^n_+$. 
	We denote the polyhedral outer-approximation $\{ x \in \R^n_+ \mid D x \geq f \}$ of $Q$ by $P_{(D,f)}$. Then by definition 
	\begin{align*}
		\A_k(Q) = \bigcap_{(D,f) \textrm{ is a \kvi \ for $Q$}} (P_{(D,f)})^I.
	\end{align*}

	It will be important to show that if $Q$ is well-behaved, then so is the closure $\A_k(Q)$. For that we need the following observation.

%M: This claim is essentially saying that given a valid inequality, there is one donimating it (in the sense it has same LHS, but stronger RHS) that is the finite convex combination of the constraints
\paragraph{Claim 1} Consider a well-behaved covering set $Q$ and let $\alpha^\top x \geq \beta$ be a valid inequality for it. Then, there exists a valid inequality $\hat{\alpha}^\top x \geq \hat{\beta}$ for $Q$ with the following properties: (i) $\hat{\alpha}_j \leq \alpha_j$ for all $j \in [n]$ (ii) $\hat{\beta} \geq \beta$ (iii) $\hat{\alpha}_j \leq \hat{\beta}$ for all $j \in [n]$.
\\ \\ \emph{Proof.} %\begin{proof}
As $Q \neq \emptyset$, by the generalized Farkas Lemma (Theorem 3.1 in \cite{lopez1998linear}), $\alpha^\top x \geq \beta$ is a valid inequality for $Q$ if and only if  
\begin{eqnarray*}
\left[
\begin{array}{c}
\alpha \\ \beta
\end{array}
\right]
 \in \text{cl} \Bigg( \text{cone} \Bigg( \Bigg\{ 
 \left[
\begin{array}{c}
\boldsymbol{0} \\ -1
\end{array}
\right]\Bigg\} \ \cup \ 
\Bigg\{
\left[
\begin{array}{c}
(\Ai)^\top \\ b_i
\end{array}
\right]
;~i \in \mathcal{I}
  \Bigg\} \ \cup \ \Bigg\{
 \left[\begin{array}{c}
e_j \\ 0
\end{array}\right];~j \in [n]  
   \Bigg\} \Bigg) \Bigg)
 := F,
\end{eqnarray*}
where $\text{cl}$ and $\boldsymbol{0}$ stands for the closure and the vector of zeros in $\R^n$, respectively. 
We also let 
\begin{eqnarray*}
G :=
\text{cl} \Bigg( \text{cone} \Bigg( \Bigg\{ 
\left[
\begin{array}{c}
(\Ai)^\top \\ b_i
\end{array}
\right]
;~i \in \mathcal{I}
  \Bigg\} \Bigg) \Bigg), ~~~H := \cone \Bigg( 
 \left[
\begin{array}{c}
\boldsymbol{0} \\ -1
\end{array}
\right] \ \cup \ 
\Bigg\{
 \left[\begin{array}{c}
e_j \\ 0
\end{array}\right];~j \in [n]  
   \Bigg\} \Bigg)
\end{eqnarray*}
Note that $F = \text{cl}(G + H)$. We will show that $G+H$ is closed, thereby implying that $F = G+H$. 

For that, notice that the cones $G$ and $H$ are \emph{positively semi-independent}, that is if $g \in G$ and $h \in H$ satisfy $g + h = [\boldsymbol{0}, 0]$, then $g=h= [\boldsymbol{0},0]$: To see this, consider a vector $[a, -b] \in H$, so $a$ and $b$ are non-negative, such that $[-a,b]$ belongs to $G$. Since $\Ai$ is non-negative for all $i \in \mathcal{I}$, this implies that $a= 0$. Furthermore, as $Q$ is non-empty and the inequality $-a^\top x \ge b$ is valid for $Q$, we obtain $b=0$, which concludes the argument. 

%Let $v \in G,~ w \in \text{cone}([\boldsymbol{0},-1])$ such that $v + w = [\boldsymbol{0},0]$. Since $Q \neq \emptyset$, we have that $[\boldsymbol{0},1] \notin G$, and therefore it is straightforward to verify that $v = w = [\boldsymbol{0},0]$. Thus, the cones $G$ and $\text{cone}([\boldsymbol{0},-1])$ are \emph{positively semi-independent}.

 Hence, by a result in \cite{gale1976malinvaud}, $G+H$ is closed, and so $F = G + H$.
 %which gives 
%\begin{equation}
%\label{eq:HandG}
%H = G + \text{cone}([\boldsymbol{0},-1]).
%\end{equation}
%The above relation \eqref{eq:HandG} 
This implies that if $\alpha^\top x \geq \beta$  is a valid inequality for $Q$, then 
\begin{eqnarray*}
\left[
\begin{array}{c}
\alpha \\ \beta
\end{array}
\right]
= 
\left[
\begin{array}{c}
\hat{\alpha} \\ \hat{\beta}
\end{array}
\right]
+
\lambda
\left[
\begin{array}{c}
\boldsymbol{0} \\ -1
\end{array}
\right]
+
\mu_j
\left[
\begin{array}{c}
e_j \\ 0
\end{array}
\right],
\quad \lambda \geq 0,
~ \mu_j \geq 0 \ \forall j \in [n]
,
\end{eqnarray*}
where $[\hat{\alpha}, \hat{\beta}] \in G$. Note that $\hat{\alpha}^\top x \geq \hat{\beta}$ is a valid inequality for $Q$. Moreover, $[\hat{\alpha}, \hat{\beta}] \in G$ implies that $\hat{\alpha}_j \leq \hat{\beta}$ for all $j \in [n]$. Also, all the other conditions of the claim are satisfied which completes the proof.
~$\diamond$
  %\end{proof}
\paragraph{Claim 2} 
If $Q$ is a well-behaved covering set, then $\A_k(Q)$ is also a well-behaved covering set.
\\ \\ \emph{Proof.} %\begin{proof}
Given $(D,f)$ a \kvi \ for $Q$, let $(\hat{D},\hat{f})$ be obtained as in Claim 1. Then, by the previous claim, $(\hat{D}, \hat{f})$ is a \kvi \ for $Q$. Observe that by construction of $(\hat{D},\hat{f})$ we have $P_{(D,f)} \supseteq P_{(\hat{D},\hat{f})}$, and therefore $(P_{(D,f)})^I \supseteq (P_{(\hat{D},\hat{f})})^I$. Hence
%, if $Q$ is a well-behaved set, we have that 
\begin{equation}
\label{eq:Df_forPI}
\A_k(Q) = \bigcap_{(D,f) \ \text{is a \kvi \ for $T$}} (P_{(\hat{D},\hat{f})})^I.
\end{equation}

To show that $\A_k(Q)$ is well-behaved, it suffices to show that $(P_{(\hat{D},\hat{f})})^I$ is of the form $\{ x \in \R^n_+ \mid R^i x \geq s_i,~ i \in [m']  \}$, where $0 \leq R_{ij} \leq s_i$ for all $j \in [n],~ i \in [m']$.
%\red{M: Missing: $R \ge 0$ comes from the fact that this is a covering set, as is well-known(?)}. 
Since the recession cone of $P_{(\hat{D},\hat{f})}$ is $\R^n_+$, and $P_{(\hat{D},\hat{f})}$ is a polyhedron, by Theorem 6 in \cite{convex2013deymoran}, $(P_{(\hat{D},\hat{f})})^I$ is a rational polyhedron with the same recession cone as $P_{(\hat{D},\hat{f})}$. Hence, $R$ is non-negative. Moreover, we may take the inequalities $R^i x \geq s_i,~ i \in [m']$ to be the facet-defining inequalities that satisfy at $n+1$ affinely independent integer points at equality. To show $R_{i,j^*} \leq s_i$ for some $j^* \in [n]$, observe that in particular there exists an integer point $\hat{x}$ among these $n+1$ affinely independent ones satisfying $\hat{x}_{j^*} \geq 1$ and $\sum_{j \in [n]} R_{ij} \hat{x}_j = s_i$ (else all these points would satisfy the additional equation $x_j = 0$ and live in an $(n-2)$-dimensional space, contradicting their affine independence). This implies $R_{i,j^*} = \frac{s_i-\sum_{j \neq j^*} R_{ij} \hat{x}_j}{\hat{x}_{j^*}} \leq s_i$.
~$\diamond$
\\  \\ %\end{proof}
Let $Q^\ell$ be the $\ell^{\text{th}}$ $k$-aggregation closure of $Q$.
\paragraph{Claim 3} 
$Q^\ell \subseteq \frac{1}{8 \log_2(2k)} Q^{\ell+1}$. 
\\ \\ \emph{Proof.} %\begin{proof}
By the previous claim, $Q^\ell$ is a well-behaved covering set. For every $(D,f)$ \kvi \ for $Q^\ell$, by Theorem \ref{thm:vazirani} we have 
$$Q^\ell \subseteq P_{(\hat{D},\hat{f})} \subseteq \frac{1}{8 \log_2(2k)} (P_{(\hat{D}, \hat{f})})^I.$$
%where $(D,f)$ is a \kvi \ for $Q^\ell$.
 By Observation~\ref{obs:bijection} we have that
\begin{align*}
\begin{split}
Q^\ell \ \subseteq & \ \frac{1}{8 \log_2(2k)} \bigcap_{(D,f) \textup{ is \kvi \ for } Q^\ell} (P_{(\hat{D}, \hat{f})})^I \\
= & \frac{1}{8 \log_2(2k)} Q^{\ell+1},
\end{split}
\end{align*}
where the last equality follows from \eqref{eq:Df_forPI}.
~$\diamond$
\\  \\ %\end{proof}
\indent
Using an argument similar to the proof of Theorem \ref{thm:packrankgen} (employing now Proposition \ref{prop:blowupCover}), we obtain that the rank of the $k$-aggregation closure is at least $\left \lceil  \left(\frac{\textup{log}_2\left(\frac{z^I}{z^{LP}}\right)}{3+\textup{log}_2\textup{log}_2(2k)}\right)\right \rceil$. 
%############################################################
%############################################################
\\ \\ \\ \\
\textbf{Acknowledgements.} 
%\begin{acknowledgements} 
Santanu S. Dey would like to acknowledge the support of the NSF grant CMMI\#1149400.
%If you'd like to thank anyone, place your comments here
%and remove the percent signs.
%\end{acknowledgements}

%############################################################
%############################################################
\newpage
\begin{appendix}
\section{Polyhedrality of aggregation closure for dense IPs}
\label{sec:appendix}
We prove the result for the case of covering IPs and a similar proof can be given for the packing case. 

\begin{proposition}
Let $Q = \{ x \in \R^n_+ \mid Ax \geq b \}$ be a covering polyhedron with $A \in \Z_+^{m \times n},~b \in \Z_+^{n}$, $A_{ij} \geq 1$ for all $i \in [m],~j\in [n]$, and $b_i \geq 1$ for all $i \in [m]$. Then, $\A_k(Q)$ is a polyhedron.
\end{proposition}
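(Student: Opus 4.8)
The plan is to show that when $A$ is dense (every entry at least $1$) and integral, only finitely many distinct sets of the form $\conv(\{x \in \Z^n_+ \mid D^j x \ge f_j~\forall j\in[k], ~x \le u\})$ can arise as we range over all choices of $k$ valid inequalities $D^j x \ge f_j$ for $Q$, so that $\A_k(Q)$ is a finite intersection of polyhedra and hence a polyhedron. First I would recall from Claim 1 in the proof of Theorem~\ref{thm:rankCover} that every valid inequality $\alpha^\top x \ge \beta$ for $Q$ can be replaced, without enlarging $P_{(D,f)}$, by one satisfying $0 \le \hat\alpha_j \le \hat\beta$ for all $j$; moreover since $Q$ is a covering polyhedron the relevant inequalities have $\hat\beta > 0$, and after scaling we may assume $\hat\beta = 1$, so $0 \le \hat\alpha_j \le 1$ for every $j$. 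Thus it suffices to understand $\conv(\{x \in \Z^n_+ \mid D^j x \ge 1~\forall j\in[k], ~x\le u\})$ for coefficient vectors $D^j$ lying in $[0,1]^n$.

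Next I would argue that, because the $x \le u$ bounds and nonnegativity restrict attention to the finite box $B := \{0,1,\dots\}^n \cap [0,u]$ (finite since each $u_j$ is finite or, if some $u_j = \infty$, one handles that coordinate by a separate but analogous argument using that $D^j_i \le b_i$ is still bounded — actually in the dense covering case with $b$ integral all relevant coefficients can be bounded, so I'd first reduce to the case where we only need to know, for each integer point $p \in B$ and each constraint, whether $D^j p \ge 1$ holds), the polytope $\conv(\{x \in \Z^n_+ \mid D^j x \ge 1~\forall j, x\le u\})$ is completely determined by the $k$-tuple of subsets $(T_1,\dots,T_k)$ where $T_j = \{p \in B \mid D^j p \ge 1\}$. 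Since $B$ is finite there are only finitely many such subsets, hence only finitely many such $k$-tuples, hence only finitely many distinct sets in the intersection defining $\A_k(Q)$. The main technical point to nail down carefully is the reduction that lets us replace the infinitely many possible right-hand sides/coefficients by a finite combinatorial datum: one must check that denseness ($A_{ij}\ge 1$, $b_i \ge 1$, integrality) is exactly what forces the normalized coefficient vectors into a bounded region and forces the feasibility of an integer point in the box to depend only on finitely much information. I expect this reduction — showing the relevant valid inequalities can be normalized to have bounded (indeed $[0,1]$) coefficients and right-hand side $1$, and dealing with possibly infinite $u_j$ — to be the main obstacle; once it is in place, finiteness of the intersection and hence polyhedrality follows immediately, and the analogous packing statement follows by the symmetric argument already promised in the text.
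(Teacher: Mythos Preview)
Your high-level strategy --- showing that only finitely many distinct integer hulls arise in the intersection defining $\A_k(Q)$ --- is exactly the paper's plan, but your execution has a real gap and an unnecessary detour.

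The gap: the proposition concerns a covering polyhedron \emph{without} upper bounds, so there is no $u$ and your box $B = [0,u]^n \cap \Z^n_+$ is all of $\Z^n_+$. You acknowledge this only parenthetically (``if some $u_j=\infty$, one handles that coordinate by a separate but analogous argument'') and defer the actual work to ``the main technical point to nail down carefully.'' But that deferred point \emph{is} the entire content of the proof --- it is precisely where denseness must be used, and you never actually use it. Normalizing via Claim~1 to get $\hat\alpha_j\in[0,1]$ and $\hat\beta=1$ does not by itself produce a finite box: $\hat\alpha_j$ could be arbitrarily small (or zero), making the intercept on axis $j$ unbounded. You need a \emph{lower} bound on $\hat\alpha_j$, and that is exactly what denseness provides --- but you have not extracted it.

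The paper's argument is more direct and avoids Claim~1 and normalization entirely. Working with the aggregation definition (legitimate since $Q$ is polyhedral), the intercept of any aggregated inequality $\lambda^\top A x \ge \lambda^\top b$ on the $j$-th axis is $(\lambda^\top b)/(\lambda^\top A)_j$, which as a weighted mediant of the ratios $b_i/A_{ij}$ lies in $[\min_i b_i/A_{ij},\ \max_i b_i/A_{ij}]$; here $A_{ij}\ge 1$ is exactly what makes every intercept finite and uniformly bounded. Setting $M=\max_{i,j} b_i/A_{ij}$ and $T=[0,M]^n\cap\Z^n_+$, every point of $\Z^n_+\setminus T$ has some coordinate exceeding all intercepts and hence satisfies every aggregated system. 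Thus the integer feasible set of any $k$-aggregation is $S\cup(\Z^n_+\setminus T)$ for some $S\subseteq T$, and finiteness of $T$ gives finitely many distinct integer hulls. Your normalization route can be salvaged --- denseness yields $\hat\alpha_j \ge 1/\max_i b_i$, hence intercepts bounded by $\max_i b_i$ --- but this is strictly more work than the paper's two-line intercept bound.
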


\begin{proof}
The intercept of the hyperplane corresponding to the $i^{\text{th}}$ constraint, $\Ai x \geq b_i$, of the $j^{\text{th}}$ coordinate axis is $\frac{b_i}{A_{ij}}$. It is straightforward to verify that the intercept of any aggregated constraint on the $j^{\text{th}}$ coordinate axis belongs to the set $\left[\min_{i \in [m]} \frac{b_i}{A_{ij}},\max_{i \in [m]} \frac{b_i}{A_{ij}}\right]$. Let $M = \max_{i \in [m],~j \in [n]} \frac{b_i}{A_{ij}}$ and let $T = [0,M]^n \cap \Z_+^n$.

Based on the above observation, the set of integer points contained in $\{ x \in \R^n_+ \mid (\lambda^\ell)^\top A x \geq (\lambda^\ell)^\top b,~ \ell \in [k] \}$ is of the form $S \cup (\Z_+^n \setminus T)$ where $S \subseteq T$. Since $T$ is a finite set, this completes the proof as the number of distinct integer hulls obtained from $k$-aggregations is finite.
\end{proof}

%#########################################################
%#########################################################
%#########################################################
%#########################################################

\section{Proof of Proposition \ref{prop:approxPack}} \label{app:charApproxPacking}

	Given a convex set $C \subseteq \R^n$, its support function $\delta^*(. \mid C)$ is defined by $\delta^*(c \mid C) = \sup \{ c^T x \mid x \in C\}$.
	
	Consider packing sets $U \supseteq V$. Since $U$ and $V$ are closed, from Corollary 13.1.1 of \cite{rockafellar:1970} we have that $U \supseteq \alpha V$ iff 
	\begin{align}	
		\sup_{c \in \R^n} \left(\delta^*(c \mid U) - \delta^*(c \mid \alpha V)\right) \le 0. \label{eq:rock}
	\end{align} 
	Since $U$ is a packing set we have the following property. Consider a vector $c \in \R^n$, let $I$ be the index of its negative components, and let $\tilde{c}$ be obtained by changing the components of $c$ in $I$ to $0$. Then $\delta^*(c \mid U) = \delta^*(\tilde{c} \mid U)$: the direction ``$\le$'' follows from $U \subseteq \R^n_+$; the direction ``$\ge$'' holds because for every point $x \in U$, if we construct $\tilde{x}$ by changing the components in $I$ of $x$ to $0$ then $\tilde{x} \in U$ and $c^T \tilde{x} = \tilde{c}^T x$. Since the same holds for $\alpha V$, we have that in equation \eqref{eq:rock} we can take the supremum over only non-negative $c$'s, and hence it holds iff for all $c \in \R^n_+$, $\delta^*(c \mid U) \le \delta^*(c \mid \alpha V)$. But since $\delta^*(c \mid \alpha V) = \alpha\,\delta^*(c \mid V)$ (Corollary 16.1.1 of \cite{rockafellar:1970}), this happens iff for all $c \in \R^n_+$, $\delta^*(c \mid U) \le \alpha\,\delta^*(c \mid V)$. This concludes the proof.

\section{Proof of Proposition \ref{prop:inthullPack}} \label{app:packinginthull}

Let $Q = \{ x \in \R_+^n \mid \Ai x \leq b_i \ \forall i \in I \}$. We assume that for all $j \in [n]$, there exists $i \in I$ with $A_{ij} > 0$. Otherwise, we can project out the $j^{\text{th}}$ variable and continue with the argument as the $j^{\text{th}}$ variable is allowed to take any value. Therefore, $Q$ is a bounded set and $Q^I$ is a polyhedron. Let $Q^I = \{ x \in \R_+^n  \mid Cx \leq d \}$. We next argue that $C$ and $d$ are non-negative to complete the proof.   

Note that since $\boldsymbol{0} \in Q$, $d \geq 0$. The fact that we can take $C \geq 0$ follows from the following claim.

\paragraph{Claim.}
Let $C^i x \leq d_i$ be a facet-defining inequality for $Q^I$ and $C_{i j^{*}} < 0$ for some $i$ and $j^{*}$. Define a vector $\hat{c}$ as $\hat{c}_{j^{*}} = 0$ and $\hat{c}_{j} = C_{ij}$ for all other $j$. Then  
$\hat{c} x \leq d_i$ is valid for $Q^I$.  
\\ \\ \emph{Proof.} %\begin{proof}
	  Assume by contradiction that there exists $\hat{x} \in Q \cap \Z^n $ such that $\sum_{j = 1}^n \hat{c}_{j}\hat{x}_j > d_i$. Since $Q$ is a packing set, we have that $\tilde{x} \in Q \cap \Z^n$, where $\tilde{x}$ is defined as $\tilde{x}_j = \hat{x}_j$ for all $j \in [n] \setminus \{j^{*}\}$ and $\tilde{x}_j^{*} = 0$. Then $d_i < \sum_{j = 1}^n \hat{c}_{j}\hat{x}_j = \sum_{j = 1}^n \hat{c}_{j}\tilde{x}_j  = \sum_{j = 1}^n {C}_{ij}\tilde{x}_j \leq d_i$, a contradiction. ~$\diamond$
\\ %\end{proof}
\end{appendix}
%############################################################
%############################################################
% BibTeX users please use one of
%\bibliographystyle{spbasic}      % basic style, author-year citations
%\bibliographystyle{spmpsci}      % mathematics and physical sciences
%\bibliographystyle{spphys}       % APS-like style for physics
\bibliographystyle{plain}
\bibliography{test}   % name your BibTeX data base

\end{document}